\newtheorem{theorem}{Theorem}[section]
\newtheorem{lemma}[theorem]{Lemma}
\newtheorem{proposition}[theorem]{Proposition}
\theoremstyle{definition}
\newtheorem{definition}[theorem]{Definition}
\newtheorem{example}[theorem]{Example}
\newtheorem{remark}[theorem]{Remark}
\newcommand{\Real}{{\mathbb R}}
\newcommand{\Rational}{{\mathbb Q}}
\newcommand{\Complex}{{\mathbb C}}
\newcommand{\Integral}{{\mathbb Z}}
\newcommand{\Matrix}{{\mathrm{Mat}}}
\newcommand{\Natural}{{\mathbb{N}}}
\newcommand{\ud}{{\mathrm{d}}}
\title{Degree of $L^2$--Alexander torsion
for 3--manifolds}
\author[Yi Liu]{%
        Yi Liu} 
\address{%
    Beijing International Center for Mathematical Research\\
    No.~5 Yiheyuan Road, Haidian District, Beijing 100871, China P.R.} 
\email{%
    liuyi@math.pku.edu.cn}  
\subjclass[2010]{Primary 57M27; Secondary 57Q10}
\keywords{$L^2$--Alexander torsion, Thurston norm}
\date{%
 \today}
\begin{document}

\begin{abstract}
	For an irreducible orientable compact $3$-manifold $N$ with empty or incompressible toral boundary,
	the full $L^2$--Alexander torsion $\tau^{(2)}(N,\phi)(t)$
	associated to any real first cohomology class $\phi$ of $N$ is represented by
	a function of a positive real variable $t$.
	The paper shows that $\tau^{(2)}(N,\phi)$ is continuous, everywhere positive, and
	asymptotically monomial in both ends. Moreover,
	the degree of $\tau^{(2)}(N,\phi)$ equals the Thurston norm of $\phi$. The result
	confirms a conjecture of
	J.~Dubois, S.~Friedl, and W.~L\"uck and addresses a question of W.~Li and W.~Zhang. 
	Associated to any admissible homomorphism $\gamma:\pi_1(N)\to G$,
	the $L^2$--Alexander torsion $\tau^{(2)}(N,\gamma,\phi)$ is shown to be 
	continuous and everywhere positive provided that $G$ is residually finite
	and $(N,\gamma)$ is weakly acyclic. In this case, 
	a generalized degree can be assigned to $\tau^{(2)}(N,\gamma,\phi)$.
	Moreover, the generalized degree is	bounded by the Thurston norm of $\phi$.
\end{abstract}

\maketitle

\section{Introduction}
		Let $N$ be an irreducible orientable compact $3$-manifold with empty or incompressible toral boundary.
		Given a homomorphism $\gamma:\pi_1(N)\to G$ to a countable target group $G$
		and a cohomology class $\phi\in H^1(N;\,\Real)$,
		the triple $(\pi_1(N),\gamma,\phi)$ is said to be \emph{admissible} if 
		the homomorphism $\pi_1(N)\to \Real$ induced by $\phi$ factors through $\gamma$.
		 Associated to any given admissible triple,
		the \emph{$L^2$--Alexander torsion}
		has been introduced by J\'{e}r\^{o}me Dubois, Stefan Friedl, and Wolfgang L\"{u}ck 
		\cite{DFL-torsion}. It is a function		
			$$\tau^{(2)}(N,\gamma,\phi):\,\Real_+\to[0,+\infty),$$
		uniquely defined up to multiplication by a function of the form $t\mapsto t^r$ where $r\in\Real$.
		In this paper, we use a dotted equal symbol to mean 
		two functions being equal to each other
		up to such a monic power function factor.
		When $\gamma$ is taken to be $\mathrm{id}_{\pi_1(N)}:\pi_1(N)\to\pi_1(N)$,
		the corresponding function is called the \emph{full $L^2$--Alexander torsion} 
		with respect to $\phi$,	denoted by $\tau^{(2)}(N,\phi)(t)$.
		In \cite{DFL-torsion,DFL-symmetric}, the following properties
		about the full $L^2$--Alexander torsion are proved:		
		\begin{enumerate}
		\item For all $c\in\Real$,
		$$\tau^{(2)}(N,\,c\phi)(t)\,\doteq\,\tau^{(2)}(N,\phi)(t^c).$$
		\item 
		$$\tau^{(2)}(N,-\phi)(t)\,\doteq\,\tau^{(2)}(N,\phi)(t).$$
		\item For any fibered class $\phi\in H^1(N;\Integral)$,
		$$\tau^{(2)}(N,\phi)(t)\,\doteq\,\begin{cases}1&t\in(0,e^{-h(\phi)})\\t^{x_N(\phi)}&t\in(e^{h(\phi)},+\infty)\end{cases}$$
		where $h(\phi)$ denotes the entropy of the monodromy,	and $x_N(\phi)$ denotes the Thurston norm.
		\item Denoting by $\mathrm{Vol}(N)$ the simplicial volume of $N$,
		$$\tau^{(2)}(N,\phi)(1)\,=\,e^{\frac{\mathrm{Vol}(N)}{6\pi}}.$$
		\item If $\mathrm{Vol}(N)$ equals $0$,
		$$\tau^{(2)}(N,\phi)(t)\,\doteq\,\begin{cases}1&t\in(0,1]\\t^{x_N(\phi)}&t\in[1,+\infty)\end{cases}$$
		\end{enumerate}

	For knot complements, the full $L^2$--Alexander torsion recovers the $L^2$--Alexander invariant introduced earlier
	by Weiping Li and Weiping Zhang \cite{LZ-Alexander,LZ-AlexanderConway}. If $\gamma$ is virtually abelian,
	the $L^2$--Alexander torsion is closely related to the twisted Alexander polynomial through certain function
	associated to the Mahler measure \cite{DFL-torsion}. We refer the reader to the survey \cite{DFL-flavors}
	for more relations between the $L^2$--Alexander torsion and other flavors of Alexander-type invariants.
	
	It is generally anticipated that the degree of Alexander-type invariants conveys topological information about
	of the cohomology class $\phi$ of $N$. For example, the degree of twisted Alexander polynomials can be used to 
	detect the Thurston norm of $\phi$ due to Stefan Friedl and Stefano Vidussi \cite{FV-detectThurstonNorm}.
	Various comparison results are also known, cf.~\cite{Cochran,FK-norm,Harvey-degree,McMullen,Turaev,Vidussi-norm}.
	For $L^2$--Alexander torsion, a fundamental problem is to define the degree in the first place.
	The following version	has been proposed by Dubois--Friedl--L\"uck \cite[Section 1.2]{DFL-torsion}, 
	(there simply called the degree):	
	
	\begin{definition}\label{degree-a}
		Let $f:\Real_+\to [0,+\infty)$ be a function. Suppose that $f$ is asymptotically monomial in both ends,
		namely, as $t\to+\infty$, the following asymptotic formula
		holds for some constants $C_{+\infty}\in \Real_+$ and $d_{+\infty}\in\Real$:
			$$f\sim C_{+\infty}\cdot t^{d_{+\infty}},$$
		and the same property holds with $+\infty$ replaced by $0+$.
		Here the notation $f\sim g$ means that the ratio between the functions on both sides tends to $1$.
		For such $f$, the \emph{asymptote degree} of $f$ is defined to be the value:
			$$\mathrm{deg}^{\mathtt{a}}(f)\,=\,d_{+\infty}-d_{0+}\,\in\,\Real.$$
	\end{definition}
	
	The main goal of this paper is to establish the existence of 
	the asymptote degree for the full $L^2$--Alexander torsion of $3$-manifolds,
	and confirm in this case that the degree equals the Thurston norm:

	\begin{theorem}\label{main-torsion}
		Let $N$ be an irreducible orientable compact $3$-manifold with empty or incompressible toral boundary.
		Given any cohomology class $\phi\in H^1(N;\,\Real)$,
		the following properties hold true for any representative of the full $L^2$--Alexander torsion $\tau^{(2)}(N,\phi)$.
		\begin{enumerate}
		\item The function $\tau^{(2)}(N,\phi)(t)$ is continuous and everywhere positive,
			defined for all $t\in\Real_+$. In fact,
			the function $\tau^{(2)}(N,\phi)(t)\cdot\max\{1,t\}^m$
			is multiplicatively convex for any sufficiently large positive constant $m$,
			where the bound depends on $N$ and $\phi$.
		\item As the parameter $t$ tends to $+\infty$,
			$$\tau^{(2)}(N,\phi)(t)\,\sim\,C(N,\phi)\cdot t^{d_{+\infty}}$$
			for some constant $d_{+\infty}\in\Real$ and some constant
			$$C(N,\phi)\,\in\left[1,\,e^{\mathrm{Vol}(N)/6\pi}\right].$$
			The same asymptotic formula holds true for with $+\infty$ replaced by $0+$.
		\item	Hence the asymptote degree of $\tau^{(2)}(N,\phi)$ is valid. 
		Furthermore, 
			$$\mathrm{deg}^{\mathtt{a}}\left(\tau^{(2)}(N,\phi)\right)\,=\,x_N(\phi).$$
		\item The leading coefficient function
			\begin{eqnarray*}
				H^1(N;\Real)&\to& \left[1,\,e^{\mathrm{Vol}(N)/6\pi}\right]\\
				\phi&\mapsto & C(N,\phi)
			\end{eqnarray*}
			is upper semicontinuous.
		\end{enumerate}
	\end{theorem}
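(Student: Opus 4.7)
The plan is to establish the four parts by combining regularity properties of Fuglede--Kadison determinants with the virtual fibering theorem of Agol and Wise, ultimately reducing to the fibered-class formula in property (3) of the introduction.

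For parts (1) and (2), I would fix a finite CW structure on $N$ and express $\tau^{(2)}(N,\phi)(t)$ as an alternating product of Fuglede--Kadison determinants of twisted boundary operators acting on $\ell^{2}$-cellular chains. Each such factor is multiplicatively convex in $t$ (equivalently, convex in $\log t$) by the standard behavior of the log-determinant under one-parameter families of bounded operators, up to a controlled cusp at $t=1$ reflecting the defect of $L^{2}$-acyclicity. Multiplying by $\max\{1,t\}^{m}$ for sufficiently large $m$ absorbs this cusp and gives the multiplicative-convexity statement in part (1); continuity and positivity are then automatic. The resulting function $s\mapsto\log\tau^{(2)}(N,\phi)(e^{s})+m\max\{s,0\}$ is a proper convex function on $\Real$, so its one-sided slopes have well-defined limits $d_{+\infty}$ and $d_{0+}$ at $\pm\infty$, with finiteness coming from an elementary bound on the chain complex. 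Extrapolating the right asymptote back to $s=0$ and combining with property (4), $\tau^{(2)}(N,\phi)(1)=e^{\Vol(N)/6\pi}$, yields $C(N,\phi)\leq e^{\Vol(N)/6\pi}$; the lower bound $C(N,\phi)\geq 1$ follows from $\tau^{(2)}(N,\phi)(t)\geq 1$ above $t=1$ together with the monomial asymptote.

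The central challenge is part (3). For integral fibered $\phi$, the equality $\deg^{\mathtt{a}}(\tau^{(2)}(N,\phi))=x_N(\phi)$ is immediate from property (3), and extends to all classes in an open fibered cone by continuity of $x_N$. For a general rational $\phi$, I would invoke Agol's virtual fibering theorem together with the Przytycki--Wise refinement: there is a finite regular cover $p\colon\tilde N\to N$ of degree $d$ such that $p^{*}\phi$ lies in the closure of a fibered cone of $\tilde N$. The multiplicativity $\tau^{(2)}(\tilde N,p^{*}\phi)\doteq\tau^{(2)}(N,\phi)^{d}$ under finite covers, paired with $x_{\tilde N}(p^{*}\phi)=d\cdot x_N(\phi)$, then reduces the asymptote-degree equality for $\phi$ to a limit statement within a fibered cone of $\tilde N$. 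Irrational $\phi$ are handled by approximation in $H^{1}(N;\Real)$. I expect the main technical obstacle to lie exactly in this reduction: one must prove that $\deg^{\mathtt{a}}$ is continuous along rays approaching $p^{*}\phi$ from within the fibered cone, which requires uniform control on the multiplicative-convexity corrections $m$ and on the leading coefficients $C(\tilde N,\cdot)$ as the cohomology class varies.

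Finally, part (4) follows by writing $C(N,\phi)=\limsup_{t\to+\infty}\tau^{(2)}(N,\phi)(t)\cdot t^{-x_N(\phi)}$ and using continuity of $\tau^{(2)}(N,\phi)(t)$ in $\phi$ for each fixed $t$, together with the piecewise linearity of $x_N$: a pointwise $\limsup$ of continuous functions is upper semicontinuous.
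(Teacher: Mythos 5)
Your outline reproduces the general architecture (reduce to determinants of a matrix, use convexity, use Agol--Wise virtual fibering), but the two steps you treat as routine are precisely the theorems the paper has to prove, and as stated your argument for them fails. First, the multiplicative convexity of $t\mapsto\mathrm{det}^{\mathtt{r}}_{\mathcal{N}(G)}\bigl(\kappa(\phi,\gamma,t)(A)\bigr)$ is not a ``standard behavior of the log-determinant under one-parameter families'': the Fuglede--Kadison determinant is only upper semicontinuous under norm limits, and the twist $\kappa(\phi,\gamma,t)$ does not commute with taking adjoints, so no naive log-convexity argument applies. This is Theorem \ref{mConvex-eBounded}, whose proof needs residual finiteness, a cofinal tower of virtually abelian quotients, Mahler-measure computations in the free abelian case, the $\epsilon$-perturbations $H_{n,\epsilon}$, and the semicontinuity Lemmas of Section \ref{Sec-rFKdet}; positivity is likewise not ``automatic'' but comes from L\"uck approximation (Lemma \ref{nonzeroTorsion}) plus the zero-or-nowhere-zero dichotomy. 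Second, convexity only yields convergence of slopes, i.e.\ the growth bound degree $\mathrm{deg}^{\mathtt{b}}$; it does not give the asymptotic formula $\tau^{(2)}(N,\phi)(t)\sim C\cdot t^{d_{+\infty}}$ with $C\in(0,+\infty)$, since a convex log--log plot can approach its limiting slope while its intercept drifts to $-\infty$. Your lower bound $C\geq1$ rests on the claim that $\tau^{(2)}(N,\phi)(t)\geq1$ for $t\geq1$, which is unjustified and not even invariant under the $t^{r}$ ambiguity. The paper's mechanism is entirely different: because $A$ is over $\Integral\pi_1(N)$, the virtually abelian approximants have leading coefficients equal to Mahler measures of integer Laurent polynomials, hence $\geq1$ (Lemma \ref{coefficientVA}), and the wedge-region argument of Lemma \ref{mConvexVersion}, combined with convergence of the degrees along a tower furnished by virtual RFRS (Lemma \ref{quasifiberedTower}), transfers monomial asymptoticity to the limit (Theorem \ref{rationalAsymptotic}). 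Your part (3) correctly flags the quasi-fibered reduction as the obstacle but leaves it unresolved; note also that the paper never proves continuity of $\mathrm{deg}^{\mathtt{a}}$ along rays into a fibered cone --- it proves Lipschitz continuity of $\mathrm{deg}^{\mathtt{b}}$ in the class (Theorem \ref{continuityOfDegree}) and only afterwards upgrades $\mathrm{deg}^{\mathtt{b}}$ to $\mathrm{deg}^{\mathtt{a}}$ by the criterion above.

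Part (4) as you argue it is incorrect on two counts. A $\limsup_{t\to+\infty}$ of functions continuous in $\phi$ is an infimum of suprema and carries no semicontinuity in general (it is suprema of continuous families that are lower semicontinuous, infima that are upper semicontinuous), and in any case continuity of $\tau^{(2)}(N,\phi)(t)$ in $\phi$ for fixed $t$ is not available --- Lemma \ref{norm-semicontinuous} gives only $\limsup_{n}V(\phi_n,t)\leq V(\phi,t)$. The paper obtains upper semicontinuity of $\phi\mapsto C(N,\phi)$ by feeding exactly this pointwise upper semicontinuity, together with convergence of the degrees from Theorem \ref{continuityOfDegree}, back into the geometric Lemma \ref{mConvexVersion}, which bounds $\limsup_n C(N,\phi_n)$ by $C(N,\phi)$. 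So the conclusion of (4) is right, but your route to it would not compile into a proof.
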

	
	In particular, Theorem \ref{main-torsion} confirms 
	Conjecture 1.1 (1) of Dubois--Friedl--L\"uck \cite{DFL-flavors}.
	In fact, many aspects of Theorem \ref{main-torsion}
	have also been conjectured, at least for knot complements, cf.~
	\cite[Subsection 5.8]{DFL-flavors}. In particular, the first part of
	Theorem \ref{main-torsion} addresses the question (Q2) of 
	Li--Zhang \cite{LZ-AlexanderConway}.
	
	The full $L^2$--Alexander torsion apparently loses information about 
	the fiberedness of cohomology classes in general.
	In fact, we have already observed that 
	the full $L^2$--Alexander torsion of graph manifolds 
	is completely determined by the Thurston norm, \cite[Theorem 1.2]{DFL-torsion}, \cite{Herrmann}.
	However, we exhibit an example at the end of this paper to indicate that nontrivial leading coefficients
	could occur, (Section \ref{Sec-example}). 
	The example might suggest that the leading coefficient $C(N,\phi)$ retains some information
	about the cohomology class $\phi$ which is volume (of the 3-dimensional hyperbolic type) in nature.
	For a primitive classes $\phi\in H^1(N;\Integral)$,
	we hence wonder if $C(N,\phi)$ measures certain volume of the guts 
	if one decomposes $N$ along a taut subsurface dual to $\phi$.
			
	It is possible to prove an analogous comparison theorem for more general $L^2$--Alexander torsions.
	To this end, we introduce another degree
	under less strict requirements:
	
	\begin{definition}\label{degree-b}
		Let $f:\Real_+\to [0,+\infty)$ be a function. Suppose that the following supremum and infimum
		exist in $\Real$:
			$$\mathrm{deg}^{\mathtt{b}}_{+\infty}(f)\,=\,\inf\left\{D_{+\infty}\in\Real\,:\,\lim_{t\to+\infty}f(t)\cdot t^{-D_{+\infty}}\,=\,0\right\},$$
		and
			$$\mathrm{deg}^{\mathtt{b}}_{0+}(f)\,=\,\sup\left\{D_{0+}\in\Real\,:\,\lim_{t\to0+}f(t)\cdot t^{-D_{0+}}\,=\,0\right\}.$$			
		For such $f$, the \emph{growth bound degree} of $f$ is defined to be the value:
			$$\mathrm{deg}^{\mathtt{b}}(f)\,=\,\mathrm{deg}^{\mathtt{b}}_{+\infty}-\mathrm{deg}^{\mathtt{b}}_{0+}\,\in\,\Real.$$
	\end{definition}
	
	By saying that a pair $(N,\gamma)$ is \emph{weakly acyclic}, we mean that
	there are no non-vanishing $L^2$--Betti numbers
	for the covering space of $N$ that corresponds to $\mathrm{Ker}(\gamma)$,
	regarded as an $\mathrm{Im}(\gamma)$--space,
	cf.~\cite[Section 6.5]{Lueck-book}.
				
	\begin{theorem}\label{main-torsion-weak}
		Let $N$ be an irreducible orientable compact $3$-manifold with empty or incompressible toral boundary,
		and $\gamma:\pi_1(N)\to G$ be a homomorphism.
		Suppose that $G$ is finitely generated and residually finite, and
		$(N,\gamma)$ is weakly acyclic.
		Then the following properties hold true for any representative of the $L^2$--Alexander torsion 
		$\tau^{(2)}(N,\gamma,\phi)$ of any admissible triple $(N,\gamma,\phi)$ over $\Real$.
		\begin{enumerate}
		\item The function $\tau^{(2)}(N,\gamma,\phi)(t)$ is continuous and everywhere positive,
			defined for all $t\in\Real_+$. In fact,
			the function $\tau^{(2)}(N,\gamma,\phi)(t)\cdot\max\{1,t\}^m$
			is multiplicatively convex for any sufficiently large positive constant $m$,
			where the bound depends on $(N,\gamma,\phi)$.
		\item The growth bound degree of $\tau^{(2)}(N,\gamma,\phi)$ is valid.
		Furthermore, 
			$$\mathrm{deg}^{\mathtt{b}}\left(\tau^{(2)}(N,\gamma,\phi)\right)\,\leq\,x_N(\phi).$$
		\item The degree function
			\begin{eqnarray*}
				H^1(G;\Real)&\to& \Real\\
				\xi&\mapsto & \mathrm{deg}^{\mathtt{b}}\left(\tau^{(2)}(N,\gamma,\phi+\gamma^*\xi)\right)
			\end{eqnarray*}
			is Lipschitz continuous.
		\end{enumerate}
	\end{theorem}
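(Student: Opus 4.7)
The plan is to reduce Theorem~\ref{main-torsion-weak} to Theorem~\ref{main-torsion} by a residual-finiteness approximation. I would choose a descending chain of finite-index normal subgroups $G=G_0\supset G_1\supset\cdots$ with $\bigcap_i G_i=\{e\}$, let $\bar\gamma_i\colon\pi_1(N)\to G/G_i$ be the induced quotients, and denote by $p_i\colon M_i\to N$ the finite covers corresponding to $\mathrm{Ker}(\bar\gamma_i)$, of degree $d_i=[\pi_1(N):\mathrm{Ker}(\bar\gamma_i)]$. Weak acyclicity of $(N,\gamma)$ and residual finiteness of $G$ should then combine, through L\"uck-type approximation for Fuglede--Kadison determinants, to present $\log\tau^{(2)}(N,\gamma,\phi)(t)$ as the limit of $d_i^{-1}\log\tau^{(2)}(M_i,p_i^*\phi)(t)$, each factor of which is controlled by Theorem~\ref{main-torsion}. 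This single bridge underlies the entire argument.

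For part~(1), each approximant $\tau^{(2)}(M_i,p_i^*\phi)(t)\cdot\max\{1,t\}^{m_i}$ is multiplicatively convex with an exponent $m_i$ estimable purely from the chain complex of $N$, which yields a uniform bound $m:=\sup_i m_i/d_i<\infty$ after the normalization by $d_i$. Convexity modulo this monomial correction passes to pointwise limits, so the property transfers to $\tau^{(2)}(N,\gamma,\phi)$. Continuity on the open ray $\Real_+$ is then automatic from convexity, while positivity and finiteness follow from weak acyclicity, which excludes the values $0$ and $\infty$.

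For part~(2), the Thurston norm is multiplicative under finite covers, so $x_{M_i}(p_i^*\phi)=d_i\cdot x_N(\phi)$, and Theorem~\ref{main-torsion}(3) identifies $\mathrm{deg}^{\mathtt{a}}(\tau^{(2)}(M_i,p_i^*\phi))$ with this quantity. Scaling by $d_i^{-1}$ and invoking the approximation then yields the growth bound $\mathrm{deg}^{\mathtt{b}}(\tau^{(2)}(N,\gamma,\phi))\leq x_N(\phi)$; the inequality may be strict because $\gamma$-twisting can discard Thurston-norm information. For part~(3), the key observation is that $\tau^{(2)}(N,\gamma,\phi+\gamma^*\xi)(t)$ is jointly log-convex in $(\xi,\log t)$, inherited from the exponentially-weighted Fuglede--Kadison determinants; hence each of $\mathrm{deg}^{\mathtt{b}}_{+\infty}$ and $\mathrm{deg}^{\mathtt{b}}_{0+}$, viewed as a functional of $\xi$, is convex (being a lim sup or lim inf of convex functions). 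Combined with the pointwise bound by the seminorm $x_N(\phi+\gamma^*\xi)$, standard convex-analytic arguments on the finite-dimensional space $H^1(G;\Real)$ then give Lipschitz continuity for each piece, and hence for their difference.

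The hard part will be executing the approximation step rigorously under merely the weak-acyclicity hypothesis. Generic L\"uck-type determinant approximation requires either Novikov--Shubin positivity or uniform spectral gaps at zero, neither of which is directly supplied here. I expect one must exploit the specific Laurent-polynomial $t$-dependence of the $L^2$-Alexander boundary operators together with weak acyclicity (which ensures injectivity of the limiting operators on a dense subspace) to secure uniform convergence of $d_i^{-1}\log\tau^{(2)}(M_i,p_i^*\phi)(t)$ on compact subsets of $\Real_+$. A subsidiary technical issue is that each finite-stage torsion is defined only up to a $t^{r_i}$ ambiguity; these representatives must be chosen coherently across $i$ so that the limit yields a bona fide function and the Thurston-norm comparison is well-posed.
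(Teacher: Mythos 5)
The central bridge of your plan fails. If $p_i\colon M_i\to N$ is the finite cover corresponding to $\mathrm{Ker}(\bar\gamma_i)$, then by the restriction formula for Fuglede--Kadison determinants (the same induction/restriction identity the paper uses in the proofs of Proposition \ref{mConvex-eBounded-VA} and Lemma \ref{quasifiberedTower}), the \emph{full} torsion of the cover satisfies $\tau^{(2)}(M_i,p_i^*\phi)\,\doteq\,\tau^{(2)}(N,\phi)^{d_i}$: it is twisted by $\mathrm{id}_{\pi_1(M_i)}$ and retains no memory of $\gamma$ whatsoever. Hence $d_i^{-1}\log\tau^{(2)}(M_i,p_i^*\phi)(t)$ is the same for every $i$, equal to $\log\tau^{(2)}(N,\phi)(t)$, and your proposed limit recovers the full torsion of $N$, not $\tau^{(2)}(N,\gamma,\phi)$. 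The invariant you need lives over $\mathcal{N}(G)$, where $G$ is a \emph{quotient} of $\pi_1(N)$; the meaningful approximation therefore runs down a tower of quotients of $G$ (as in Lemma \ref{stable-semicontinuous} and Theorem \ref{mConvex-eBounded}), where L\"uck approximation gives only $\limsup\leq$ for determinants in the absence of a spectral gap, and the approximants $\tau^{(2)}(N,\bar\gamma_i,\phi)$ are not full torsions of any $3$-manifold, so Theorem \ref{main-torsion} cannot be quoted for them. (Inside this paper the reduction would in any case be circular, since the proof of Theorem \ref{main-torsion} invokes Theorem \ref{main-torsion-weak}.) This is why the paper argues directly on the matrix $A$ of the efficient presentation (Lemma \ref{torsionToMatrix}): positivity comes from weak acyclicity \emph{together with} residual finiteness via L\"uck's approximation theorem (Lemma \ref{nonzeroTorsion}), not from weak acyclicity alone as you assert; multiplicative convexity comes from Theorem \ref{mConvex-eBounded}; and the bound $\mathrm{deg}^{\mathtt{b}}\leq x_N(\phi)$ comes from the explicit block shape of $A$ with $k-l=x_N(\phi_0)$, a step for which your cover-and-limit scheme offers no substitute (and your own remark that the inequality can be strict is incompatible with the equality-type limit you claim).

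The mechanism you propose for part (3) is also not viable. Joint convexity of $(\xi,\log t)\mapsto\log\tau^{(2)}(N,\gamma,\phi+\gamma^*\xi)(t)$ fails in general: already in the full-twist, fibered situation the torsion is $\doteq 1$ for small $t$ and $\doteq t^{x_N(\phi)}$ for large $t$, and the model function $(\phi,u)\mapsto x_N(\phi)\cdot\max\{u,0\}$ is not convex (compare the midpoint of $(0,1)$ and $(\phi_1,0)$ with the average of the values). Worse, if joint convexity did hold, the asymptotic slope in the $\log t$ direction would be a recession slope of a finite convex function and hence independent of the base point $\xi$, forcing $\mathrm{deg}^{\mathtt{b}}_{+\infty}$ to be constant in $\xi$, which is false. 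The Lipschitz continuity genuinely requires the quantitative estimate $|\mathrm{deg}^{\mathtt{b}}(V_\xi)-\mathrm{deg}^{\mathtt{b}}(V_\eta)|\leq 2R(A,\gamma^*(\xi-\eta))$ of Theorem \ref{continuityOfDegree}, obtained from the two-parameter multiplicative convexity of the deformed determinant, not from an abstract convexity-in-$\xi$ argument.
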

	
	In a weaker form, Theorem \ref{main-torsion-weak}	generalizes 
	the virtually abelian case which has been done in \cite{DFL-torsion}.
	For example, if $N$ is a compact orientable surface bundle over the circle and $\gamma$ is a homomorphism
	of $\pi_1(N)$	onto a residually finite group $G$ such that $\gamma^*:H^1(G;\Real)\to H^1(N;\Real)$
	is onto, then the assumptions of Theorem \ref{main-torsion-weak} are satisfied.
	
	\begin{remark}
		Completely independently from work of this paper, Friedl and L\"uck 
		have also proved the equality between the (growth bound) degree of the full $L^2$-Alexander torsion 
		and the Thurston norm \cite{FriedlLueck-new}.
		In fact, their work implies Theorem \ref{main-torsion-weak} (2) as well. 
		Moreover,	their work relies on a systematic study of twisting $L^2$-invariants by L\"uck \cite{Lueck-new}.
		We point out that both \cite{Lueck-new} and \cite{FriedlLueck-new} keep 
		track of the Euler structures more closely than this paper does,
		which should be important for potential applications.
		For example, with a fixed Euler structure,
		the $L^2$--Alexander torsion becomes a genuine function in the pair $(\phi,t)$,
		so it would make sense to study its continuity and other properties.
	\end{remark}
	
	In the rest of the introduction, we discuss some ingredients involved 
	in the proof of Theorem \ref{main-torsion}. Theorem \ref{main-torsion-weak}
	can be proved along the way.
	After choosing some CW complex structure of $N$ convenient for calculation
	as used in \cite{DFL-torsion},
	we may manipulate $\tau(N,\phi)(t)$ into an alternating product, where the factors
	are regular Fuglede--Kadison determinants of the $L^2$--Alexander twist of 
	square matrices over $\Integral\pi_1(N)$. Except the one
	coming from the boundary homomorphism between dimension 2 and dimension 1,
	the factors are all very simple and well understood.
	Therefore, the proof of Theorem \ref{main-torsion} can be reduced to the study
	of the regular Fuglede--Kadison determinant for an $L^2$--Alexander twist of 
	a single matrix $A$.
	Associated to the admissible triple $(\pi_1(N),\mathrm{id}_{\pi_1(N)},\phi)$, 
	the factor corresponding to $A$ is a non-negative function 
	defined for $t\in\Real_+$ of the form
		$$V(t)\,=\,\mathrm{det}^{\mathtt{r}}_{\mathcal{N}(G)}\left(\kappa(\phi,\mathrm{id}_{\pi_1(N)},t)(A)\right),$$
	where $A$ is a square matrix over $\Integral\pi_1(N)$,
	(cf.~Section \ref{Sec-prelim} for the notations).
	
	The first ingredient is to show that $V(t)$ is a multiplicatively convex function with bounded exponent.
	See Section \ref{Sec-mConvexFunction} for the terminology.
	In fact, we show in Theorem \ref{mConvex-eBounded} that the asserted property 
	holds true for general admissible triples $(\pi,\gamma,\phi)$ over $\Real$ and square matrices
	$A$ over $\Complex\pi$,
	as long as the target group $G$ of $\gamma$ is residually finite.
	The exponent bound can be easily perceived, and can be easily proved once the multiplicative
	convexity is available.
	When $G$ is finitely generated and virtually abelian, the multiplicative convexity can be verified by
	computation using Mahler measure of multivariable Laurent polynomials.
	Therefore, to approach the residually finite case, it is natural to consider 
	a cofinal tower of virtually abelian quotients $G$, denoted as
		$$G\to\cdots\to\Gamma_n\to\cdots\to\Gamma_2\to\Gamma_1,$$
	which gives rise to a sequence of $L^2$--Alexander twist homomorphisms $\kappa(\gamma_n,\phi,t)$,
	where $\gamma_n:\pi\to\Gamma_n$ is the induced homomorphism.
	For any given $t\in\Real_+$,
	the spectra of the matrices $A_n(t)=\kappa(\gamma_n,\phi,t)(A)$ could become increasingly
	dense near $0$, as $n$ tends to $\infty$,
	so it should not be expected in general that the sequence of functions
	$\mathrm{det}^{\mathtt{r}}_{\mathcal{N}(\Gamma_n)}(A_n(t))$
	converged pointwise to $V_G(t)=\mathrm{det}^{\mathtt{r}}_{\mathcal{N}(G)}(A(t))$.
	By introducing a positive $\epsilon$-pertubation of 
	the positive operator $A_n(t)^*A_n(t)$, namely,
	$$H_{n,\epsilon}(t)\,=\,A_n(t)^*A_n(t)+\epsilon\cdot\mathbf{1},$$
	the issue of small spectrum values can be bypassed.
	However, one has to be careful because of the fact that the $L^2$--Alexander twist does not commute with the operation of
	taking self-adjoint. For example, $H_{n,\epsilon}(t)$ is in general not a family of $L^2$--Alexander twisted operators,
	so the regular determinant of $H_{n,\epsilon}(t)$
	does not need to be multiplicatively convex in the parameter $t\in\Real_+$.
	Instead of arguing that way, for any fixed $T\in\Real_+$, we look at the functions
	$$W_{n,\epsilon}(s,T)=\mathrm{det}^{\mathtt{r}}_{\mathcal{N}(\Gamma_n)}\left(\kappa(\gamma_{n*}\phi,\mathrm{id}_{\Gamma_n},s)(H_{n,\epsilon}(T))\right)$$
	in a new parameter $s\in\Real_+$.
	As $n\to\infty$ and then $\epsilon\to0+$, we show that
	$W_{n,\epsilon}(1,T)$ converges to $W_{\infty,0}(1,T)$, while the limit superior of $W_{n,\epsilon}(s,T)$ 
	does not exceed $W_{\infty,0}(s,T)$. Using the fact that $W_{n,\epsilon}(s,T)$ are multiplicatively convex in $s\in\Real_+$,
	it can be implied that $V_G(t)$ is multiplicatively convex as well.
		
	The growth bound degree is applicable to any (nowhere zero) multiplicatively convex
	function with bounded exponent.
	It can be equivalently characterized as the width of the range of
	all possible exponents (or `multiplicative slopes') 
	between pairs of points.
	As a consequence of Theorem \ref{mConvex-eBounded},
	we are able to show that the growth bound degree $\mathrm{deg}^{\mathtt{b}}(V)$
	depends Lipschitz-continuously on the cohomology class $\phi\in H^1(N;\Real)$, (Theorem \ref{continuityOfDegree}).
	
	The second ingredient is a criterion to confirm
	that $V(t)$ is asymptotically monomial as $t$ tends to $+\infty$ or $0+$,
	or in other words, that $\mathrm{deg}^{\mathtt{a}}(V)$ equals $\mathrm{deg}^{\mathtt{b}}(V)$.
	To motivate the conditions, consider the sequence of determinant functions 
	$$V_n(t)\,=\,\mathrm{det}^{\mathtt{r}}_{\mathcal{N}(\Gamma_n)}(\kappa(\gamma_n,\phi,t)(A))$$
	associated to the cofinal tower of virtually abelian quotients $\Gamma_n$ above.
	Using techniques of \cite{Lueck-approximating}, what one can show is that for every $t\in\Real_+$,  as $n\to\infty$,
	the supremum limit of $V_n(t)$ does not exceed $V(t)$. On the other hand, the functions $V_n(t)$ 
	are all multiplicatively convex
	and asymptotically monomial in both ends. As $t\to+\infty$, suppose
		$$V_n(t)\sim C_{+\infty,n}\cdot t^{d_{+\infty,n}},$$
	and similarly we introduce the notations $C_{0+,n}$ and $d_{0+,n}$ for $t\to0+$.
	As $n\to\infty$, if the degrees 
	$\mathrm{deg}^{\mathrm{b}}(V_n)=\mathrm{deg}^{\mathrm{a}}(V_n)=d_{+\infty,n}-d_{0+,n}$ converge 
	to the growth bound degree $\mathrm{deg}^{\mathrm{b}}(V)$, 
	and if the coefficients $C_{+\infty,n}$ and $C_{0+,n}$ are uniformly bounded
	below by some constant $L\in\Real_+$,
	then it can be implied by the geometry of the log--log plots of the functions
	that $V(t)$ must be asymptotically monomial in both ends as well, (Lemma \ref{mConvexVersion}).
	
	For our proof of Theorem \ref{main-torsion}, the convergence of growth bound degrees
	can be guaranteed by the virtual RFRS property of $3$-manifold groups, at least
	after excluding the case of graph manifolds, which has been treated by \cite[Theorem 1.2]{DFL-torsion}, \cite{Herrmann}.
	In fact, combined with the continuity of degree that we have already mentioned,
	the method of \cite[Theorem 9.1]{DFL-torsion} can be applied to produce a cofinal tower
	of virtually abelian quotients
	such that the growth bound degree of each $V_n(t)$ and $V(t)$ is equal to the Thurston norm of $\phi$.
	On the other hand, based on the fact that 
	$A$ is a square matrix over $\Integral\pi_1(N)$,
	computation shows that the coefficients $C_{+\infty,n}$ and
	$C_{0+,n}$ are all radicals of the Mahler measure of certain multivariable
	Laurent polynomial over $\Integral$. 
	This yields a uniform lower bound $1$ for all the coefficients.
	Therefore, the criterion is applicable to our situation,
	and we can complete the proof of Theorem \ref{main-torsion}.
	
	In Section \ref{Sec-prelim}, we recall some terminology that is used in this paper. 
	In Section \ref{Sec-rFKdet}, we introduce regular Fuglede--Kadison determinants and
	discuss its limiting behavior. In Section \ref{Sec-mConvexFunction}, 
	we introduce multiplicatively convex functions
	and mention some basic properties. 
	After these preparing sections,
	we study the regular Fuglede--Kadison determinants of matrices under
	$L^2$--Alexander twists in Sections \ref{Sec-mConvex-eBounded}, \ref{Sec-continuityOfDegree}, and \ref{Sec-asymptotics}:
	The multiplicative convexity and the existence of the growth bound degree is shown in Section \ref{Sec-mConvex-eBounded};
	The continuity of degree is derived in Section \ref{Sec-continuityOfDegree}; The criterion for monomial asymptotics
	is introduced in Section \ref{Sec-asymptotics}.
	In Section \ref{Sec-mainProofs}, we apply the ingredients to $L^2$--Alexander torsions of $3$-manifolds,
	and prove Theorems \ref{main-torsion} and \ref{main-torsion-weak}.
	In Section \ref{Sec-example}, we give an example regarding nontrivial leading coefficients.

	\subsection*{Acknowledgements} The author would like to thank Stefan Friedl and Wolfgang L\"{u}ck for letting him learn their
	independent work and for subsequent valuable communications. 
	The author also thanks Weiping Li for interesting conversations.

\section{Preliminaries}\label{Sec-prelim}
	In this section, we recall some terminology of Dubois--Friedl--L\"uck \cite{DFL-torsion}.
	We also briefly recall some fundamental facts in 3-manifold topology. 
	For background in $L^2$-invariants, including group von Neumann algebras and 
	Fuglede--Kadison determinants, we refer the reader to the book of W.~L\"uck \cite{Lueck-book}.
	
	\subsection{Admissible triples}
	Admissibility conditions have been introduced by S.~Harvey 
	for study of higher-order Alexander polynomials \cite[Definition 1.4]{Harvey-monotonicity}.
	In this paper, we adopt the following notations, according to \cite{DFL-torsion}.

	\begin{definition}
	Let $L\subset \Real$ be any additive group of real numbers, for example, $\Integral$, $\Rational$, or $\Real$. 
	Given a countable group $\pi$, and a homomorphism $\phi\in \mathrm{Hom}(\pi,L)$, and
	a homomorphism $\gamma:\pi\to G$ to any countable group $G$, 
	we say that $(\pi,\phi,\gamma)$ forms \emph{an admissible triple
	over $L$}	if $\phi$ factors through $\gamma$.
	That is, for some homomorphism $G\to L$, the following diagram commutes:	
	$$\xymatrix{
	\pi \ar[r]^\gamma \ar[rd]_\phi &G \ar[d] \\& L}$$
	\end{definition}
	
	Given any positive real parameter $t\in \Real_+$, there is a homomorphism of rings:
		$$\kappa(\phi,\gamma,t):\,\Integral\pi\longrightarrow \Real G$$
	defined uniquely by
		$$\kappa(\phi,\gamma,t)(g)\,=\,t^{\phi(g)}\gamma(g)$$
	for all $g\in\pi$ via linear extension over $\Integral$.
	Then for any positive integer $p$, $\kappa(\phi,\gamma,t)$ naturally extends to be a homomorphism of algebras:
		$$\kappa(\phi,\gamma,t):\,\Matrix_{p\times p}(\Complex\pi)\to \Matrix_{p\times p}(\Complex G)$$
	by applying $\kappa(\phi,\gamma,t)$ to entries accordingly.
	
	Note that $\kappa(\phi,\gamma,t)$ is not a homomorphism of $*$-algebras in general. 	
	In fact,
	$$\kappa(\phi,\gamma,t)(A)^*=\kappa(\phi,\gamma,t^{-1})(A^*).$$
	Recall that for any square matrix $A=(a_{ij})_{p\times p}$ over
	$\Complex G$, as an operator of $\ell^{2}(G)^{\oplus p}$,
	the adjoint operator can be given by $A^*=(a^*_{ji})_{p\times p}$,
	where the involution of an element $a=\sum_{k} a_kg_k\in\Complex G$ is given by
	$a^*=\sum_k \bar{a}_kg_k^{-1}\in\Complex G$.

	Every admissible triple $(\pi,\phi,\gamma)$ over $L$ sits naturally in an affine family of 
	admissible triples parametrized by $\mathrm{Hom}(G,L)$. 
	Specifically, for any homomorphism
		$$\xi\in\mathrm{Hom}(G,L),$$
	we have a new admissible triple $(\pi,\phi+\gamma^*\xi,\gamma)$,
	where $\phi+\gamma^*\xi:\pi\to L$ is the homomorphism defined by
		$$(\phi+\gamma^*\xi)(g)\,=\,\phi(g)+\xi(\gamma(g))$$
	for all $g\in\pi$.
	To speak of continuity, we consider the space $\mathrm{Hom}(G,L)$ to be equipped with
	the compact-open topology, regarding $G$ to be a discrete group 
	and $L$ have the subspace topology of $\Real$.
	
	\begin{lemma}\label{homologicallyIsomorphic}
		If $\gamma:\pi\to G$ induces an isomorphism $\gamma_*:H_1(\pi;\,\Real)\to\,H_1(G;\,\Real)$,
		then $(\pi,\gamma,\phi)$ is admissible for every homomorphism $\phi:\pi\to\Real$.
	\end{lemma}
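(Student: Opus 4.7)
The plan is to use the universal property of first real homology: every homomorphism from a group to $\Real$ factors through its first real homology group, and the hypothesis on $\gamma_*$ lets us transport the factorization from $\pi$ to $G$.

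First, I would observe that any homomorphism $\phi:\pi\to\Real$ factors canonically as $\phi=\bar\phi\circ q_\pi$, where $q_\pi:\pi\to H_1(\pi;\Real)$ is the natural map (the abelianization followed by tensoring with $\Real$), and $\bar\phi:H_1(\pi;\Real)\to\Real$ is an $\Real$-linear map. This is because $\Real$ is abelian (forcing $\phi$ to kill the commutator subgroup), and because $\Real$ is torsion-free and divisible (so that $\phi$ further descends to $\pi^{ab}\otimes_\Integral\Real=H_1(\pi;\Real)$). The analogous factorization $q_G:G\to H_1(G;\Real)$ is available for $G$, and naturality of $H_1$ gives the commutativity $q_G\circ\gamma=\gamma_*\circ q_\pi$.

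With these factorizations in hand, I would define the desired homomorphism $\psi:G\to\Real$ by
\[
\psi\,=\,\bar\phi\,\circ\,\gamma_*^{-1}\,\circ\,q_G,
\]
using the hypothesis that $\gamma_*:H_1(\pi;\Real)\to H_1(G;\Real)$ is invertible. A direct diagram chase then yields
\[
\psi\circ\gamma\,=\,\bar\phi\circ\gamma_*^{-1}\circ q_G\circ\gamma\,=\,\bar\phi\circ\gamma_*^{-1}\circ\gamma_*\circ q_\pi\,=\,\bar\phi\circ q_\pi\,=\,\phi,
\]
which exhibits $\phi$ as factoring through $\gamma$, so $(\pi,\gamma,\phi)$ is admissible over $\Real$ as required.

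There is essentially no obstacle here: the whole argument rests on the elementary fact that $\Real$ is an injective abelian group combined with the naturality of $H_1(-;\Real)$, together with the invertibility of $\gamma_*$ supplied by the hypothesis. The only mild point to flag is that one should make explicit that $H_1(\pi;\Real)$ is being used in its group-homology meaning $\pi^{ab}\otimes_\Integral\Real$, so that it genuinely classifies $\Real$-valued characters of $\pi$.
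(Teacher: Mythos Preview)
Your proof is correct and follows essentially the same approach as the paper: both construct the factoring map as the composition $G\to H_1(G;\Real)\xrightarrow{\gamma_*^{-1}} H_1(\pi;\Real)\xrightarrow{\phi_*}\Real$ and invoke naturality of $H_1(-;\Real)$ to verify it recovers $\phi$. The paper's proof is simply a one-line version of what you wrote out in detail.
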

	
	\begin{proof}
		In this case, the composition 
			$$\pi\stackrel{\gamma}\longrightarrow G\longrightarrow H_1(G;\,\Real) \stackrel{\gamma_*^{-1}}\longrightarrow 
			H_1(\pi;\,\Real)\stackrel{\phi_*}\longrightarrow \Real$$
		recovers the homomorphism $\phi$.
	\end{proof}

	\subsection{$L^2$--Alexander torsion}
	Let $X$ be a connected finite CW complex. The universal cover $\widehat{X}$ of $X$ is a CW complex equipped with a free
	action of the deck transformation group $\pi_1(X)$. We equip the chain complex $C_*(\widehat{X})$ 
	with a left $\Integral\pi_1(X)$	action induced by the deck transformation. On the other hand, given any 
	admissible triple $(\pi_1(X),\gamma,\phi)$ over $\Real$, and given a parameter value $t\in\Real_+$,
	we may equip the Hilbert space $\ell^2(G)$ with a right $\Integral\pi_1(X)$--module structure via the representation:
		$$\kappa(\phi,\gamma,t):\,\Integral\pi_1(X)\longrightarrow \Real G.$$
	In this paper, we treat $\ell^2(G)$ as a right $\Real G$--module 
	and a left Hilbert $\mathcal{N}(G)$--module.
	Here we denote by
		$$\mathcal{N}(G)\,=\,\mathcal{B}\left(\ell^2(G)\right)^{G}$$
	the group von Neumann algebra of $G$ which 
	consists of all the bounded operators that commutes with the right multiplication by elements of $G$.		
	Twisting the chain complex of $\widehat{X}$ by the module $\ell^2(G)$ via the representation $\kappa(\phi,\gamma,t)$
	gives rise to a (left) Hilbert $\mathcal{N}(G)$--chain complex:
		$$\ell^2(G)\otimes_{\Integral\pi_1(X)}C_*(\widehat{X})$$
	and the twisted boundary homomorphism is defined by $\mathbf{1}\otimes \partial_*$.
	In fact, the twisted complex is finitely generated and free over $\mathcal{N}(G)$.
	In other words, by choosing a lift of each cell of $X$ in $\widehat{X}$, each chain module
	of the complex can be identified with a direct sum of the regular Hilbert $\mathcal{N}(G)$-modules:
	$$\ell^2(G)\otimes_{\Integral\pi_1(X)}C_k(\widehat{X})\,\cong\,\ell^2(G)^{\oplus p_k}.$$
	In this paper, we restrict ourselves to finitely generated, free
	Hilbert $\mathcal{N}(G)$--chain complexes which
	are \emph{weakly acyclic and of determinant class}.
	This means that
	the $\ell^2$-Betti numbers are all trivial and all the Fuglede--Kadison determinants of 
	the boundary homomorphisms take values in $(0,+\infty)$.
	In such case, the \emph{$L^2$--Alexander torsion} of $X$ at $t$ with respect to $\gamma$ and $\phi$
	is defined to be the multiplicatively alternating product of the Fuglede--Kadison determinants of the boundary
	homomorphisms:
		$$\tau^{(2)}(X,\gamma,\phi)(t)\,\doteq\,\prod_{k\in\Integral} \mathrm{det}_{\mathcal{N}(G)}(\mathbf{1}\otimes\partial_k)^{(-1)^k}.$$
	Here the dotted equal means that we treat the $L^2$--Alexander torsion as
	a function in the parameter $t\in\Real_+$. In fact, choosing another collection of lifts
	may result in a change of the value of the right-hand side by a multiplicative factor
	$t^r$, for some exponent $r\in\Real$ independent of $t$, 
	so the function $\tau^{(2)}(X,\gamma,\phi)$
	is well defined only up to a monic power function factor.
	We remark that our notational convention follows \cite{DFL-torsion},
	and the exponential of the $L^2$-torsion
	according to \cite[Definition 3.29]{Lueck-book} is the multiplicative inverse of the $\tau^{(2)}$ above.
	To be convenient, a value $0$ is artificially assigned to $\tau^{(2)}(X,\gamma,\phi)(t)$
	if the twisted complex fails to be weakly acyclic or of determinant class.
	With this convention, the $L^2$--Alexander torsion associated to $(X,\gamma,\phi)$ 
	is a function determined up to a monic power function factor:
		$$\tau^{(2)}(X,\gamma,\phi):\,\Real_+\to[0,+\infty).$$
	
	Let $N$ be a compact smooth manifold, possibly with boundary,
	and $\gamma:\pi_1(N)\to G$ be a homomorphism. 
	The \emph{$L^2$--Alexander torsion} of $N$
	with respect to any admissible triple	$(\pi_1(N),\gamma,\phi)$, denoted as
	$\tau^{(2)}(N,\gamma,\phi)$, is understood 
	to be the $L^2$--Alexander torsion of any finite CW complex structure of $N$. 
	This notion does not depend on 
	the choice	of the CW structure \cite[Section 4.2]{DFL-torsion}.
	When $\gamma$ is taken to be $\mathrm{id}_{\pi_1(N)}:\pi_1(N)\to\pi_1(N)$,
	the triple $(\pi_1(N),\gamma,\phi)$ is admissible for every class $\phi\in H^1(N;\Real)$.
	The corresponding $L^2$--Alexander torsion is called the \emph{full $L^2$--Alexander torsion}
	with respect to $\phi$, denoted as $\tau^{(2)}(N,\phi)$.

	\subsection{Thurston norm and virtual fibering}
	Let $N$ be an irreducible compact orientable $3$-manifold with empty or incompressible toral boundary.
	The \emph{Thurston norm}, named after William P.~Thurston who discovered it in \cite{Thurston-norm}, 
	is a seminorm of the vector space:
	$$x_N:\,H_2(N,\partial N;\,\Real)\to[0,+\infty),$$
	which takes $\Integral$ values on the integral lattice $H_2(N,\partial N;\,\Integral)$.
	The Thurston norm measures certain complexity of the second relative homology classes,
	and it is known to be non-degenerate if the $3$-manifold $N$ supports a complete hyperbolic
	structure in its interior.
	The unit ball $B_x(N)$ of $x_N$ is a convex polyhedron, symmetric about the origin,
	and supported by finitely many linear faces	carried by rational affine hyperplanes.
	If $N$ fibers over the circle via a map $N\to S^1$, any fiber of the fibration
	represents a homology class  $[\Sigma]\in H_2(N,\partial N;\Integral)$, which depends only
	on the fibration.	We can canonically identify $[\Sigma]$ with
	a cohomology class $\phi\in H^1(N;\Integral)\cong[N,S^1]$, 
	by Poincar\'e Duality (after fixing an orientation of $N$).
	As we have assumed $N$ to be irreducible with incompressible boundary, $x_N(\phi)$ equals
	$-\chi(\Sigma)$.
	Any such $\phi$ is called a \emph{fibered class}.
	Thurston has shown that every fibered class is contained in the open cone over a top-dimensional face 
	of $\partial B_x(N)$, and every integral class of that cone is a fibered class.
	Such open cones are hence called the \emph{fibered cones} of $x_N$. 
	
	In general, $N$ may possess no fibered cones at all. However, given any class $\phi\in H^1(N;\Real)$, 
	we can usually pass to a finite cover $p:\,\tilde{N}\to N$,
	so that $p^*\phi\in H^1(\tilde{N};\Real)$ is \emph{quasi-fibered},
	namely, $p^*\phi$ lies on the (point-set theoretic) boundary of a fibered cone
	possessed by $\tilde{N}$. To be precise, the virtual quasi-fibering property
	holds true for every class $\phi\in H^1(N;\Real)$
	if $\pi_1(N)$ is virtually residually finite rationally solvable (or RFRS),
	due to a theorem of Ian Agol \cite{Agol-RFRS}.
	Based on the confirmations of the
	Virtual Haken Conjecture and the Virtual Fibering Conjecture due to the works
	of Ian Agol \cite{Agol-VHC}, Daniel Wise \cite{Wise-book}, and many other authors,
	it has been known that $\pi_1(N)$ is virtually RFRS if and only if
	$N$ supports a complete Riemannian metric of nonpositive curvature in its interior,
	\cite{Liu,PW-graph,PW-mixed}.
	For example, if the simplicial volume $\mathrm{Vol}(N)$ is positive, or in other words,
	if $N$ contains at least one hyperbolic piece in its geometric decomposition,
	then the virtual quasi-fibering property is possessed by $N$.
	We refer the reader to the survey \cite{AFW-group} for more background about virtual properties
	of $3$-manifolds.

\section{Regular Fuglede--Kadison determinant}\label{Sec-rFKdet}
	Let $G$ be a countable discrete group. For any $p\times p$ matrix $A$ over $\mathcal{N}(G)$, 
	the \emph{regular Fuglede--Kadison determinant} of $A$ is defined to be
	$$\mathrm{det}^{\mathtt{r}}_{\mathcal{N}(G)}(A)
	=\begin{cases} \mathrm{det}_{\mathcal{N}(G)}(A)&\textrm{if }A\textrm{ is full rank of determinant class}
	\\0&\textrm{otherwise}\end{cases}$$
	This gives rise to a function:
	$$\mathrm{det}^{\mathtt{r}}_{\mathcal{N}(G)}:\,\Matrix_{p\times p}(\mathcal{N}(G))\to [0,+\infty).$$
	
	Regular Fuglede--Kadison determinants have been used in \cite{DFL-torsion}. In the rest of the section
	we study the semicontinuity of this quantity under two kinds of limiting
	processes.
	
	\begin{lemma}\label{norm-semicontinuous}
		If a sequence of $p\times p$ matrices $\{A_n\}_{n\in\Natural}$ over $\mathcal{N}(G)$
		converges to $A\in\Matrix_{p\times p}(\mathcal{N}(G))$ with respect to the norm topology,
		then
		$$\limsup_{n\to\infty} \mathrm{det}^{\mathtt{r}}_{\mathcal{N}(G)}(A_n)\,\leq\,\mathrm{det}^{\mathtt{r}}_{\mathcal{N}(G)}(A).$$
		Moreover, if $A$ is a positive operator, then
		$$\lim_{\epsilon\to0+}\mathrm{det}^{\mathtt{r}}_{\mathcal{N}(G)}(A+\epsilon\cdot\mathbf{1})\,=\,
		\mathrm{det}^{\mathtt{r}}_{\mathcal{N}(G)}(A).$$
	\end{lemma}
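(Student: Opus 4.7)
The plan is to dispose of the \emph{moreover} clause first, because the upper-semicontinuity estimate will depend crucially on knowing that the $\epsilon$-regularization converges to the true regular determinant on positive operators. Both parts reduce to manipulating the integral formula $\ln\det_{\mathcal{N}(G)}(H+\epsilon\cdot\mathbf{1}) = \int_0^{\|H\|}\ln(\lambda+\epsilon)\,dF_H(\lambda)$, where $F_H$ denotes the spectral density function of a positive operator $H\in\Matrix_{p\times p}(\mathcal{N}(G))$ with respect to the amplified trace on $\mathcal{N}(G)\otimes\Matrix_{p\times p}(\Complex)$. The central identity I will use repeatedly is $\mathrm{det}^{\mathtt{r}}_{\mathcal{N}(G)}(B)^{2}=\mathrm{det}^{\mathtt{r}}_{\mathcal{N}(G)}(B^{*}B)$, valid in all cases because both sides vanish simultaneously and agree otherwise.

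For the \emph{moreover} clause, with $A$ positive, I would split the Stieltjes integral at $\lambda=1$. On $[1,\|A\|]$, the integrand $\ln(\lambda+\epsilon)$ is uniformly bounded by $\ln(\|A\|+1)$ for $\epsilon\in(0,1]$, so dominated convergence yields convergence to $\int_1^{\|A\|}\ln\lambda\,dF_A(\lambda)$. On $[0,1]$, the quantity $-\ln(\lambda+\epsilon)$ is nonnegative and monotonically increases to $-\ln\lambda$ as $\epsilon\downarrow 0$, with limit $+\infty$ at a possible atom at the origin; monotone convergence then gives $\int_0^{1}-\ln(\lambda+\epsilon)\,dF_A(\lambda)\to\int_0^{1}-\ln\lambda\,dF_A(\lambda)\in[0,+\infty]$. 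Combining both pieces, $\ln\det_{\mathcal{N}(G)}(A+\epsilon\cdot\mathbf{1})\to\int_0^{\|A\|}\ln\lambda\,dF_A(\lambda)\in[-\infty,+\infty)$; this equals $\ln\mathrm{det}^{\mathtt{r}}_{\mathcal{N}(G)}(A)$ when finite, and matches the convention $\mathrm{det}^{\mathtt{r}}_{\mathcal{N}(G)}(A)=0$ otherwise (since a non-finite value occurs exactly when $A$ fails to be full rank or of determinant class).

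For the upper-semicontinuity assertion, fix any $\epsilon>0$. Pointwise, $\ln(\lambda+\epsilon)\geq\ln\lambda$, which together with the integral formula yields the key bound $\mathrm{det}^{\mathtt{r}}_{\mathcal{N}(G)}(A_n)^{2}=\mathrm{det}^{\mathtt{r}}_{\mathcal{N}(G)}(A_n^{*}A_n)\leq\det_{\mathcal{N}(G)}(A_n^{*}A_n+\epsilon\cdot\mathbf{1})$. Norm convergence $A_n\to A$ gives norm convergence $A_n^{*}A_n+\epsilon\cdot\mathbf{1}\to A^{*}A+\epsilon\cdot\mathbf{1}$, and the spectra of these operators all lie in a common compact subinterval of $[\epsilon,+\infty)$. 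Since $\ln$ is continuous on such an interval, continuous functional calculus gives $\ln(A_n^{*}A_n+\epsilon\cdot\mathbf{1})\to\ln(A^{*}A+\epsilon\cdot\mathbf{1})$ in operator norm, and the amplified trace, being norm-continuous on bounded operators, yields $\det_{\mathcal{N}(G)}(A_n^{*}A_n+\epsilon\cdot\mathbf{1})\to\det_{\mathcal{N}(G)}(A^{*}A+\epsilon\cdot\mathbf{1})$. Hence $\limsup_n\mathrm{det}^{\mathtt{r}}_{\mathcal{N}(G)}(A_n)^{2}\leq\det_{\mathcal{N}(G)}(A^{*}A+\epsilon\cdot\mathbf{1})$; letting $\epsilon\to 0+$ and invoking the \emph{moreover} clause with the positive operator $A^{*}A$ completes the argument. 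The main obstacle is the first step: the logarithmic singularity at zero forces the split into monotone and dominated regimes, and one must carefully accommodate both an eventual atom of $F_A$ at the origin and the possibility that $A$ is not of determinant class, so that the regularized determinants consistently track the regular determinant in the limit.
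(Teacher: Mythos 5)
Your proof is correct and structurally the same as the paper's: reduce to positive operators via $\mathrm{det}^{\mathtt{r}}_{\mathcal{N}(G)}(B)^2=\mathrm{det}^{\mathtt{r}}_{\mathcal{N}(G)}(B^*B)$, dominate $\mathrm{det}^{\mathtt{r}}$ by the $\epsilon$-regularized determinant, pass to the limit in $n$ at fixed $\epsilon$ using norm-continuity of the determinant on uniformly invertible positive operators, and then send $\epsilon\to0+$ via the ``moreover'' clause; the only deviation is that you prove the two ingredients directly from the spectral integral and continuous functional calculus, whereas the paper quotes the Carey--Farber--Mathai continuity theorem for invertibles and L\"uck's Lemma 3.15 (5)--(6), handling the non-injective case by the explicit bound $\epsilon^b(\|A\|+\epsilon)^{p-b}$. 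One cosmetic slip to fix: on $[0,1]$ the integrand $-\ln(\lambda+\epsilon)$ need not be nonnegative (it can reach $-\ln 2$ for $\lambda$ near $1$ and $\epsilon\le1$), but it is uniformly bounded below and $dF_A$ has finite total mass at most $p$, so monotone convergence still applies after adding the constant $\ln 2$ and your conclusion stands.
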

	
	\begin{proof}
		Since $\mathrm{det}^{\mathtt{r}}_{\mathcal{N}(G)}(A^*A)$ equals $\mathrm{det}^{\mathtt{r}}_{\mathcal{N}(G)}(A)^2$,
		it suffices to show the inequality for positive operators $\{A_n\}_{n\in\Natural}$ and $A$. For any arbitrary
		constant $\epsilon>0$, the positive operators $(A_n+\epsilon\cdot\mathbf{1})$ and $(A+\epsilon\cdot\mathbf{1})$
		are invertible, so the regular Fuglede--Kadison determinant agrees with the Fuglede--Kadison determinant.
		Since the Fuglede--Kadison determinant is continuous on the subgroup of invertible matrices
		$\mathrm{GL}(p,\mathcal{N}(G))$ with respect to the norm topology \cite[Theorem 1.10 (d)]{CFM},
		$$\lim_{n\to\infty} \mathrm{det}^{\mathtt{r}}_{\mathcal{N}(G)}(A_n+\epsilon\cdot\mathbf{1})
		\,=\,\mathrm{det}^{\mathtt{r}}_{\mathcal{N}(G)}(A+\epsilon\cdot\mathbf{1}).$$
		On the other hand, by \cite[Lemma 3.15 (6)]{Lueck-book}, or as a trivial fact if $A_n$ fails to be injective,
		$$\mathrm{det}^{\mathtt{r}}_{\mathcal{N}(G)}(A_n)\leq \mathrm{det}^{\mathtt{r}}_{\mathcal{N}(G)}(A_n+\epsilon\cdot\mathbf{1}).$$
		Therefore,
		\begin{eqnarray*}
			\limsup_{n\to\infty} \mathrm{det}^{\mathtt{r}}_{\mathcal{N}(G)}(A_n)
			&\leq&\limsup_{n\to\infty} \mathrm{det}^{\mathtt{r}}_{\mathcal{N}(G)}(A_n+\epsilon\cdot\mathbf{1})\\
			&=&\mathrm{det}^{\mathtt{r}}_{\mathcal{N}(G)}(A+\epsilon\cdot\mathbf{1}).
		\end{eqnarray*}
		As $\epsilon>0$ is arbitrary, it suffices to prove
		$$\lim_{\epsilon\to0+}\mathrm{det}^{\mathtt{r}}_{\mathcal{N}(G)}(A+\epsilon\cdot\mathbf{1})\,=\,
		\mathrm{det}^{\mathtt{r}}_{\mathcal{N}(G)}(A).$$
		In fact, if $A$ is injective, the last limit follows from \cite[Lemma 3.15 (5)]{Lueck-book}.
		Otherwise, $\mathrm{det}^{\mathtt{r}}_{\mathcal{N}(G)}(A)$ equals $0$. Denoting by $b\in (0,p]$ 
		the von Neumann dimension $\dim_{\mathcal{N}(G)}\mathrm{Ker}(A)$, it is easy to estimate
			$$0\leq \mathrm{det}^{\mathtt{r}}_{\mathcal{N}(G)}(A+\epsilon\cdot\mathbf{1})\leq \epsilon^b(\|A\|+\epsilon)^{p-b}.$$
		We again have:
		$$\lim_{\epsilon\to0+}\mathrm{det}^{\mathtt{r}}_{\mathcal{N}(G)}
			(A+\epsilon\cdot\mathbf{1})\,=\,0\,=\,\mathrm{det}^{\mathtt{r}}_{\mathcal{N}(G)}(A).$$
		This completes the proof.		
	\end{proof}
			
	\begin{lemma}\label{stable-semicontinuous}
		Let
			$$G\to\cdots\to \Gamma_n\to\cdots\to \Gamma_2\to\Gamma_1,$$
		be a cofinal tower of quotients of $G$, and denote by $\psi_n:G\to\Gamma_n$
		the quotient homomorphisms.	
		Suppose that all the target groups $\Gamma_n$ are finitely generated and residually finite.
		Let $A_G$ be a square matrix over $\Complex G$.
		Then
				$$\limsup_{n\to\infty}\mathrm{det}^{\mathtt{r}}_{\mathcal{N}(\Gamma_n)}(\psi_{n*}A_G)\leq\mathrm{det}^{\mathtt{r}}_{\mathcal{N}(G)}(A_G).$$
		Moreover, for any constant $\epsilon>0$,
				$$\lim_{n\to\infty}\mathrm{det}^{\mathtt{r}}_{\mathcal{N}(\Gamma_n)}(\psi_{n*}(A_G^*A_G+\epsilon\cdot\mathbf{1}))
				=\mathrm{det}^{\mathtt{r}}_{\mathcal{N}(G)}(A_G^*A_G+\epsilon\cdot\mathbf{1}).$$
	\end{lemma}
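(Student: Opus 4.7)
The plan is to establish weak convergence of spectral measures via moment matching, then derive both statements from the integral representation
\[
\log\mathrm{det}^{\mathtt{r}}_{\mathcal{N}(H)}(B)\,=\,\tfrac{1}{2}\int_{0+}^{\infty}\log\lambda\,d\mu_{H,B}(\lambda),
\]
where $\mu_{H,B}$ is the spectral measure of $B^*B$ with respect to the von Neumann trace of $\mathcal{N}(H)$ (with the convention that $\mathrm{det}^{\mathtt{r}}=0$ when the integral diverges to $-\infty$). Since every matrix entry of $A_G$ is an element of $\Complex G$ with finite support, the operator norms $\|\psi_{n*}A_G\|$ and $\|A_G\|$ are uniformly bounded by a common constant $M$ (for instance the $\ell^1$-norm of $A_G$), so all spectral measures $\mu_n:=\mu_{\Gamma_n,\psi_{n*}A_G}$ and $\mu_\infty:=\mu_{G,A_G}$ are supported in the compact interval $[0,M^2]$.

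The essential step is to verify moment convergence: $\int\lambda^k\,d\mu_n\to\int\lambda^k\,d\mu_\infty$ for every integer $k\ge 0$. The $k$-th moment equals $\mathrm{tr}_{\mathcal{N}(\Gamma_n)}\bigl((\psi_{n*}A_G)^*(\psi_{n*}A_G)\bigr)^k$ on one side and $\mathrm{tr}_{\mathcal{N}(G)}(A_G^*A_G)^k$ on the other. Expanding $(A_G^*A_G)^k$ as a matrix over $\Complex G$, the set $S_k\subset G$ of group elements that appear in its entries is finite, depending only on $k$ and $A_G$; the trace picks out the coefficient of the identity. By cofinality of the tower $G\to\cdots\to\Gamma_n$, the map $\psi_n$ is injective on the finite set $S_k\cdot S_k^{-1}$ for all $n$ sufficiently large (depending on $k$), so for such $n$ the two traces coincide exactly. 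Because all measures are supported in a common compact set, moment convergence upgrades to weak convergence $\mu_n\to\mu_\infty$.

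For part (1), the function $\lambda\mapsto\log\lambda$ is upper semicontinuous on $[0,M^2]$ and diverges to $-\infty$ at $0$, so weak convergence yields only an upper semicontinuity estimate. For any $\delta>0$, $\log(\lambda+\delta)$ is continuous and bounded on the common support, so $\int\log(\lambda+\delta)\,d\mu_n\to\int\log(\lambda+\delta)\,d\mu_\infty$. Using $\log\lambda\le\log(\lambda+\delta)$, taking $\limsup$ in $n$, and then letting $\delta\to 0+$ via monotone convergence on the $\mu_\infty$ side (and identifying the limit with $2\log\mathrm{det}^{\mathtt{r}}_{\mathcal{N}(G)}(A_G)$, or with $-\infty$ in the non-determinant-class case, consistent with Lemma \ref{norm-semicontinuous}) yields the desired $\limsup$-inequality.

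For part (2), the operator $B=A_G^*A_G+\epsilon\cdot\mathbf{1}$ is positive with spectrum in $[\epsilon,M^2+\epsilon]$; since $\psi_{n*}$ commutes with taking adjoints and with $\epsilon\cdot\mathbf{1}$, the spectrum of $\psi_{n*}B$ lies in the same interval. The spectral measures of $B$ and of each $\psi_{n*}B$ are translates by $\epsilon$ of $\mu_\infty$ and $\mu_n$ respectively (up to obvious rescaling arising from taking $B$ versus $B^*B$), so the weak convergence of the previous step transfers. Now $\lambda\mapsto\log\lambda$ is continuous and bounded on $[\epsilon,M^2+\epsilon]$, so the integrals converge genuinely, producing the claimed equality. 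The main obstacle is rigorously justifying the moment-matching step, in particular tracking the finite support sets $S_k$ and invoking cofinality to guarantee eventual injectivity of $\psi_n$ on each; once that is in place, the limsup bound in part (1) follows from standard semicontinuity arguments for $\log\lambda$ under weak convergence, and the equality in part (2) is a clean consequence of the spectral gap forced by the positive perturbation $\epsilon\cdot\mathbf{1}$.
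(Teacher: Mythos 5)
Your proposal is correct, but it takes a genuinely different route from the paper. The paper first reduces statement (1) to statement (2) exactly as you implicitly do (monotonicity in the perturbation plus Lemma \ref{norm-semicontinuous}), but it proves the perturbed limit (2) by quoting L\"uck's approximation theorem for cofinal towers of \emph{finite} quotients: residual finiteness of each $\Gamma_n$ is used to interpolate a cofinal tower of finite quotients $\Gamma'_n$ of $G$ whose determinants $\delta$-approximate those over $\Gamma_n$, and a contradiction/diagonal argument then transfers L\"uck's theorem to the given tower. You instead prove the convergence directly: the $k$-th trace moment of $(\psi_{n*}A_G)^*(\psi_{n*}A_G)$ agrees \emph{exactly} with that of $A_G^*A_G$ once $n$ is large enough that no nontrivial element of the finite support of $(A_G^*A_G)^k$ lies in $\ker\psi_n$ (this is what cofinality together with the nested kernels of the tower gives; your phrasing ``injective on $S_k\cdot S_k^{-1}$'' is a harmless overstatement of what is needed), uniform operator-norm bounds give common compact support, so the spectral measures converge weakly; the $\epsilon$-gap makes $\log$ continuous and bounded on the relevant spectra, yielding (2), and the $\delta$-regularization plus monotone convergence (consistently with the regular-determinant convention, including the non-full-rank and non-determinant-class cases) yields (1). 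What your argument buys is self-containedness and generality: it never uses residual finiteness or finite generation of the $\Gamma_n$, essentially re-proving the ``soft'' part of L\"uck approximation that the spectral gap makes available; what the paper's route buys is brevity given the citation, at the cost of the extra hypotheses and the tower-interpolation bookkeeping.
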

	
	Here the tower being cofinal means that
			$$\bigcap_{n\in\Natural} \mathrm{Ker}{\psi_n}\,=\,\{\,\mathrm{id}_G\,\}.$$
	
	\begin{proof}
		Assuming that the `moreover' part has been proved, we can derive the first inequality as follows.
		For any constant $\epsilon>0$,
		\begin{eqnarray*}
			\mathrm{det}^{\mathtt{r}}_{\mathcal{N}(\Gamma_n)}(\psi_{n*}A_G)
			&=&\mathrm{det}^{\mathtt{r}}_{\mathcal{N}(\Gamma_n)}(\psi_{n*}(A^*_GA_G))^{1/2}\\
			&\leq&\mathrm{det}^{\mathtt{r}}_{\mathcal{N}(\Gamma_n)}(\psi_{n*}(A^*_GA_G+\epsilon\cdot\mathbf{1}))^{1/2}.
		\end{eqnarray*}
		The last expression tends to the regular Fuglede--Kadison determinant of $A_G$ as $\epsilon$ tends to $0+$, by Lemma \ref{norm-semicontinuous}.
		This implies the asserted inequality
		$$\limsup_{n\to\infty}\mathrm{det}^{\mathtt{r}}_{\mathcal{N}(\Gamma_n)}(\psi_{n*}A_G)\leq\mathrm{det}^{\mathtt{r}}_{\mathcal{N}(G)}(A_G).$$
		
		It remains to prove the asserted limit in the `moreover' part.
		For simplicity, given any constant $\epsilon>0$, we rewrite the matrices as
			$$H_\infty\,=\,A_G^*A_G+\epsilon\cdot\mathbf{1}\in\Matrix_{p\times p}(\Complex G)$$
		and
			$$H_n\,=\,\psi_{n*} H_\infty\in\Matrix_{p\times p}(\Complex\Gamma_n).$$			
		Note that the self-adjoint operators $H_n$ acting on $\ell^2(\Gamma_n)^{\oplus p}$ are positive with spectra bounded 
		uniformly	$\epsilon$ from $0$ and the same holds for $H_\infty$. In this case,
		approximation of determinants should follow from well known techniques.
		In the rest of the proof, we derive the approximation
			$$\lim_{n\to\infty}\mathrm{det}^{\mathtt{r}}_{\mathcal{N}(\Gamma_n)}(H_n)
				=\mathrm{det}^{\mathtt{r}}_{\mathcal{N}(G)}(H_\infty).$$
		from a theorem of W.~L\"uck \cite[Theorem 3.4 (3)]{Lueck-approximating},
		which is originally done for cofinal towers of finite quotients.
		
		It would be convenient to argue by contradiction, assuming that the limit of the left-hand side did not exist
		or did not equal to the right-hand side. In either case, 
		possibly after passing to a subsequence, we assume that
		there exists a constant $\delta>0$ such that the following gap estimate holds for all $n\in\Natural$:
		$$\left|\mathrm{det}^{\mathtt{r}}_{\mathcal{N}(\Gamma_n)}(H_n)-\mathrm{det}^{\mathtt{r}}_{\mathcal{N}(G)}(H_\infty)\right|\,\geq\,2\delta.$$
		
		By induction, we show that there exists a cofinal tower of finite quotients of $G$
			$$G\to\cdots\to \Gamma'_n\to\cdots\to \Gamma'_2\to\Gamma'_1,$$
		with the following properties: For all $n\in\Natural$, we have that $\Gamma'_n$ is a further quotient of $\Gamma_n$, 
		and moreover,
			$$\left|\mathrm{det}^{\mathtt{r}}_{\mathcal{N}(\Gamma'_n)}\left(H'_{n}\right)-
		\mathrm{det}^{\mathtt{r}}_{\mathcal{N}(\Gamma_n)}\left(H_n\right)\right|\,<\,\delta,$$
		where $H'_n$ is the induced matrix of $H_n$
		over $\Complex\Gamma'_n$. For $n$ equal to $1$,
		take a cofinal tower of finite quotients of $\Gamma_1$:
			$$\Gamma_1\to\cdots\to\Gamma_{1,j}\to\cdots\to\Gamma_{1,2}\to\Gamma_{1,1}.$$
		Denote the induced matrix of $H_n$ over $\Complex\Gamma_{1,j}$ by 
		$H_{1,j}$. Since $H_{1,j}$ is positive with spectrum bounded at least $\epsilon$ from $0$,
		L\"uck's theorem implies 
			$$\lim_{j\to\infty} \mathrm{det}^{\mathtt{r}}_{\mathcal{N}(\Gamma_{1,j})}\left(H_{1,j}\right)
			\,=\,\mathrm{det}^{\mathtt{r}}_{\mathcal{N}(\Gamma_1)}\left(H_{1}\right),$$
		so we choose $\Gamma'_1$ to be the quotient $\Gamma_{1,j}$ for a sufficiently large $j$.
		Suppose by induction that $\Gamma'_n$ has been constructed for some $n\in\Natural$.
		To construct $\Gamma'_{n+1}$, we take a tower of finite quotients
			$$\Gamma_{n+1}\to\cdots\to\Gamma_{n+1,j}\to\cdots\to\Gamma_{n+1,2}\to\Gamma_{n+1,1}.$$
		in the same fashion as above, but also require the first term $\Gamma_{n+1,1}$ to be $\Gamma'_n$.
		The same construction thus yields some sufficiently large $j$ such that
		$\Gamma_{n+1,j}$ can be chosen as $\Gamma'_{n+1}$. 
		This completes the induction.
		
		Provided with the new tower,
		L\"uck's theorem again implies
		$$\lim_{n\to\infty} \mathrm{det}^{\mathtt{r}}_{\mathcal{N}(\Gamma'_n)}\left(H'_n\right)\,=\,
			\mathrm{det}^{\mathtt{r}}_{\mathcal{N}(\Gamma_\infty)}\left(H_{\infty}\right).$$
		Therefore, for sufficiently large $n$,
		$$\left|\mathrm{det}^{\mathtt{r}}_{\mathcal{N}(\Gamma_n)}\left(H_n\right)
		-\mathrm{det}^{\mathtt{r}}_{\mathcal{N}(\Gamma_\infty)}\left(H_{\infty}\right)\right|\,<\,2\delta.$$
		This contradicts the assumed gap estimation, and hence completes the proof.		
	\end{proof}

\section{Multiplicatively convex function}\label{Sec-mConvexFunction}
	In this section, we give an introduction to multiplicatively convex functions.
	In subsequent sections, such functions arise naturally as we take the regular Fuglede--Kadison
	determinants of matrices under $L^2$--Alexander twists.	

	\begin{definition}
		Let $(a,b)\subset\Real_+$ be an interval of positive real numbers.
		A function $f:(a,b)\to[0,+\infty)$ is said to be \emph{multiplicatively convex} if for all points $t_0,t_1\in(a,b)$
		and every constant $\lambda\in(0,1)$, 
			$$f(t_0^\lambda\cdot t_1^{1-\lambda})\,\leq\, f(t_0)^\lambda\cdot f(t_1)^{1-\lambda}.$$
	\end{definition}
	
	The product of two multiplicatively convex functions is again multiplicatively convex. Furthermore,
	if $f(t)$ is multiplicatively convex, then for any constant $r\in\Real_+$, both $f(t^{\pm r})$ and $f(t)^r$ are
	multiplicatively convex as well.
	
	\begin{lemma}\label{zeroOrNot}
		If a function $f:\Real_+\to [0,+\infty)$ is multiplicatively convex,
		then $f$ is continuous. Moreover, $f$ is either the constant function $0$ or nowhere zero.
	\end{lemma}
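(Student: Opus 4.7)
The plan is to reduce multiplicative convexity to ordinary convexity by a logarithmic change of variables. Setting $g(s) = \log f(e^s)$ with the convention $\log 0 = -\infty$, the substitution $t = e^s$ translates the multiplicative convexity inequality for $f$ into the standard convexity inequality
$$g(\lambda s_0 + (1-\lambda) s_1) \,\leq\, \lambda g(s_0) + (1-\lambda) g(s_1)$$
for $g$ viewed as a function $\Real \to [-\infty,+\infty)$, where the right-hand side is interpreted via the usual extended-arithmetic rules ($\lambda\cdot(-\infty) = -\infty$ for $\lambda > 0$ and $a + (-\infty) = -\infty$ for $a \in \Real$). This reduction is routine and shifts the problem to a well-known setting.

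The crux is then to establish the dichotomy for $g$: either $g$ is identically $-\infty$, or $g$ takes only finite values. To prove this I would assume $g(s_0) = -\infty$ at some $s_0 \in \Real$, and for any other point $s \in \Real$ pick an auxiliary $s' \in \Real$ placing $s$ strictly between $s_0$ and $s'$, so that $s = \lambda s_0 + (1-\lambda) s'$ for some $\lambda \in (0,1)$. Convexity then forces
$$g(s) \,\leq\, \lambda g(s_0) + (1-\lambda) g(s') \,=\, -\infty,$$
where the essential point is that $g(s') < +\infty$, so the term $\lambda g(s_0)$ dominates. Hence $g \equiv -\infty$ on $\Real$, which translates back to $f \equiv 0$ on $\Real_+$.

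In the remaining case, $g$ is a real-valued convex function on all of $\Real$, and I would conclude by quoting the classical fact that such a function is automatically continuous (indeed locally Lipschitz on every compact subinterval). Composing with the continuous maps $t \mapsto \log t$ and $u \mapsto e^u$ then shows $f(t) = \exp(g(\log t))$ is continuous and everywhere positive on $\Real_+$. I do not anticipate any real obstacle here; the only step demanding minor care is the extended-arithmetic bookkeeping in the dichotomy, which rests entirely on the hypothesis $\lambda \in (0,1)$ being strictly positive so that $\lambda \cdot (-\infty) = -\infty$.
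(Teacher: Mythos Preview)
Your proposal is correct and follows essentially the same approach as the paper: reduce to ordinary convexity via the substitution $g = \log \circ f \circ \exp$, argue the dichotomy (a single zero forces $f \equiv 0$), and invoke continuity of real-valued convex functions on $\Real$. The paper's proof is simply a terser rendition of yours, declaring the dichotomy ``clear from the definition'' where you spell out the auxiliary-point argument.
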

	
	\begin{proof}
		If $f$ equals zero at some point $c$, it is clear from the definition that $f$ has to be the constant function $0$.
		When $f$ is nowhere zero, then $\log\circ f\circ \exp$ is a convex function on $\Real$. In either case,
		$f$ is continuous. 
	\end{proof}
	
	\begin{lemma}\label{mMidpointConvex}
		If $f:\Real_+\to [0,+\infty)$ is multiplicatively mid-point convex and upper semi-continuous, namely,
		\begin{itemize}
		\item for every pair of points $t_0,t_1\in\Real_+$, $f(\,\sqrt{t_0t_1}\,)\leq \sqrt{f(t_0)\cdot f(t_1)}$, and
		\item for every point $t_0\in \Real_+$, $\limsup_{t\to t_0} f(t)\leq f(t_0)$,
		\end{itemize}
		then $f$ is multiplicatively convex.
	\end{lemma}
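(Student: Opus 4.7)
The plan is to reduce the multiplicative statement to the classical Bernstein--Doetsch theorem for ordinary mid-point convex functions. The first step is to dispose of the degenerate case: if $f(t_0)=0$ at some $t_0\in\Real_+$, then mid-point convexity yields $f(\sqrt{t_0 t})\leq\sqrt{f(t_0)f(t)}=0$ for every $t\in\Real_+$, and iterating this observation produces a dense subset of $\Real_+$ on which $f$ vanishes. Upper semi-continuity then forces $f\equiv 0$, which is trivially multiplicatively convex.

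Assume henceforth that $f$ is everywhere positive, and define $g:\Real\to\Real$ by $g(x)=\log f(e^x)$. Because $\log$ and $\exp$ are continuous and order-preserving, mid-point convexity of $f$ transforms into the additive mid-point convexity
\[
g\!\left(\tfrac{x_0+x_1}{2}\right)\,\leq\,\tfrac{g(x_0)+g(x_1)}{2},
\]
upper semi-continuity of $f$ transforms into upper semi-continuity of $g$, and multiplicative convexity of $f$ is equivalent to ordinary convexity of $g$. The task is therefore to show that an upper semi-continuous, mid-point convex function on $\Real$ is convex.

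I would prove this in the standard two-step fashion. First, because an upper semi-continuous function is bounded above on every compact interval, $g$ is locally bounded above. Pick any point $a$ and, after translating so that $a=0$ and subtracting the constant $g(0)$ so that $g(0)=0$, assume $g\leq M$ on $[-\delta,\delta]$. Mid-point convexity at $0$ gives $0=g(0)\leq\tfrac{1}{2}(g(x)+g(-x))$, whence $g(-x)\geq -g(x)\geq -M$ on $[-\delta,\delta]$; and the halving inequality $g(x/2)=g(\tfrac{x+0}{2})\leq\tfrac{1}{2}g(x)$ iterated $n$ times yields $|g(x)|\leq M/2^{n}$ on $[-\delta/2^n,\delta/2^n]$, proving continuity at every point. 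Second, an elementary induction on $n$ shows mid-point convexity upgrades to the dyadic inequality $g(\lambda x_0+(1-\lambda)x_1)\leq\lambda g(x_0)+(1-\lambda)g(x_1)$ whenever $\lambda\in(0,1)$ is a dyadic rational; approximating an arbitrary $\lambda\in(0,1)$ by dyadics and using the continuity just established promotes this to the full convexity inequality. Translating back through $f(t)=e^{g(\log t)}$ finishes the proof.

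The only genuine subtlety is that upper semi-continuity gives the ``wrong'' direction to directly pass mid-point convexity through limits in $\lambda$, so I cannot simply approximate $\lambda$ by dyadics from the outset; the workaround is precisely the Bernstein--Doetsch detour through continuity, where the locally-bounded-above condition supplied by upper semi-continuity plays the crucial role. Disposing of the vanishing case at the start also avoids having to treat $\log 0=-\infty$ inside the additive argument.
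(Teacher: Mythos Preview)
Your proof is correct and follows essentially the same strategy as the paper: handle the vanishing case, then pass to $g=\log\circ f\circ\exp$ and invoke the classical fact that a mid-point convex, continuous function is convex. The one noteworthy difference is how continuity is obtained. You run the standard Bernstein--Doetsch argument (upper semi-continuity $\Rightarrow$ locally bounded above $\Rightarrow$ continuous for mid-point convex functions). The paper instead proves continuity of $f$ directly by a short $\liminf$/$\limsup$ squeeze: picking $t_n\to t_0$ with $f(t_n)\to\liminf f$, mid-point convexity at $t_0=\sqrt{t_n\cdot(t_0^2/t_n)}$ gives $f(t_0)^2\leq\liminf f\cdot\limsup f\leq f(t_0)^2$, forcing $\liminf=\limsup=f(t_0)$. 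This is slicker, though your route is the textbook one and equally valid.

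One minor remark on your vanishing case: you say iteration produces a dense zero set, but in fact a single application already kills $f$ everywhere, since as $t$ ranges over $\Real_+$ the quantity $\sqrt{t_0 t}$ already sweeps out all of $\Real_+$. No density argument or appeal to upper semi-continuity is needed there.
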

	
	\begin{proof}
		Given any $t_0\in\Real_+$, 
		let $\{t_n\in\Real_+\}_{n\in\Natural}$ be a sequence of points such that $t_n$ converges to $t_0$ and 
		$f(t_n)$ converges to $\liminf_{t\to t_0} f(t)$.
		We have 
		$$f(t_0)^2\leq \limsup_{n\to\infty} f(t_n)f(t_0^2/t_n)\leq\liminf_{t\to t_0}f(t)\cdot\limsup_{t\to t_0} f(t)\leq f(t_0)^2.$$
		Then $\liminf_{t\to t_0}f(t)=\limsup_{t\to t_0}f(t)=f(t_0)$. It follows that $f$ is continuous. It is clear that $f$
		is everywhere positive unless $f$ is constantly zero. When $f$ is everywhere positive,
		we may take $F=\log\circ f\circ \exp$ which is mid-point convex and continuous, so it is well known
		that $F$ is convex, or equivalently, that $f$	is multiplicatively convex.
	\end{proof}
	
	\begin{definition}\label{degree-mConvex-eBounded}
		Let $(a,b)\subset\Real_+$ be an interval of positive real numbers.
		A nowhere zero multiplicatively convex function $f:(a,b)\to(0,+\infty)$ is said to have \emph{bounded exponent} if there exists
		some positive constant $R$
		such that for all pairs of distinct points $t_0,t_1\in(a,b)$, 
			$$\left|\frac{\log f(t_1)-\log f(t_0)}{\log t_1-\log t_0}\right|\,\leq\, R.$$
	\end{definition}
	
	For multiplicatively convex functions, 
	the growth bound degree (Definition \ref{degree-b}) can be characterized by the limit exponents:
	
	\begin{lemma}\label{degree-bCharacterization}
		Suppose that $f:\Real_+\to[0,+\infty)$ is a nowhere zero multiplicatively convex function. 
		Then the growth bound degree 
		$\mathrm{deg}^{\mathtt{b}}(f)\in\Real$ exists
		if and only if $f$ has bounded exponent. Moreover, in this case, the following equalities hold true:
		$$\mathrm{deg}^{\mathtt{b}}_{+\infty}(f)\,=\,\lim_{t_0,t_1\to +\infty}
		\frac{\log f(t_0)-\log f(t_1)}{\log t_0-\log t_1},$$
		and
		$$\mathrm{deg}^{\mathtt{b}}_{0+}(f)\,=\,\lim_{t_0,t_1\to 0+}
		\frac{\log f(t_0)-\log f(t_1)}{\log t_0-\log t_1}.$$	
	\end{lemma}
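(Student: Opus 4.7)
The plan is to pass to the logarithmic substitution $u=\log t$ and $F(u)=\log f(e^u)$. Since $f$ is nowhere zero and multiplicatively convex, Lemma \ref{zeroOrNot} guarantees that $F:\Real\to\Real$ is a well-defined finite convex function. Under this substitution, multiplicative convexity of $f$ corresponds to ordinary convexity of $F$, the bounded exponent condition of Definition \ref{degree-mConvex-eBounded} corresponds to a global Lipschitz bound on $F$, and the two-point slopes $(\log f(t_1)-\log f(t_0))/(\log t_1-\log t_0)$ become ordinary difference quotients of $F$. The statement therefore reduces to a standard result in convex analysis.

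The main tool is the three-slopes inequality for convex functions: the secant slope $(F(u_1)-F(u_0))/(u_1-u_0)$ is non-decreasing in each argument. Applying this, the two limits
$$d_+=\lim_{u_0,u_1\to+\infty}\frac{F(u_1)-F(u_0)}{u_1-u_0},\qquad d_-=\lim_{u_0,u_1\to-\infty}\frac{F(u_1)-F(u_0)}{u_1-u_0}$$
exist in $[-\infty,+\infty]$, and they coincide with the supremum and infimum, respectively, of all secant slopes of $F$. Consequently, $f$ has bounded exponent if and only if both $d_+$ and $d_-$ are finite real numbers, which will establish the first equivalence claimed in the lemma.

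For the identification of degrees, I would prove $\mathrm{deg}^{\mathtt{b}}_{+\infty}(f)=d_+$, and deduce $\mathrm{deg}^{\mathtt{b}}_{0+}(f)=d_-$ by applying the same argument to $t\mapsto f(1/t)$ (whose logarithmic transform is $F(-u)$). Noting that $f(t)\cdot t^{-D}=e^{F(u)-Du}$, the condition $f(t)\cdot t^{-D}\to 0$ as $t\to+\infty$ is equivalent to $F(u)-Du\to-\infty$ as $u\to+\infty$. For $D>d_+$, choose $D'\in(d_+,D)$; by monotonicity of slopes, for some fixed $u_0$ and all sufficiently large $u$ one has $F(u)-F(u_0)\leq D'(u-u_0)$, hence $F(u)-Du\leq F(u_0)-D'u_0+(D'-D)u\to-\infty$. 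Conversely, for $D<d_+$, choose $D'\in(D,d_+)$ and use that $\lim_{u\to+\infty}(F(u)-F(u_0))/(u-u_0)=d_+$ for any fixed $u_0$; then for sufficiently large $u$ we get $F(u)-F(u_0)>D'(u-u_0)$, forcing $F(u)-Du\to+\infty$ so that the limit is not $0$. Both directions together pin down $\mathrm{deg}^{\mathtt{b}}_{+\infty}(f)=d_+$.

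I expect no substantial obstacle: once the logarithmic substitution is made, the argument is essentially routine convex analysis. The only delicate points are sign bookkeeping in the $0+$ case, where Definition \ref{degree-b} uses a supremum instead of an infimum (handled cleanly by the symmetry $u\leftrightarrow-u$), and verifying that the slope from a fixed basepoint $u_0$ genuinely attains the limit $d_+$ in the limit, which is a standard consequence of monotonicity of $F'_+$ and an averaging argument.
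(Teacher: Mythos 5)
Your proof is correct and follows essentially the same route as the paper: the paper's argument is exactly the monotone-secant-slope (convexity in log--log coordinates) argument, pinning $\mathrm{deg}^{\mathtt{b}}_{\pm}$ between the asymptotic slope and the validity of exponents slightly above/below it. The explicit substitution $F(u)=\log f(e^u)$ and the three-slopes inequality are just a more formalized phrasing of what the paper does directly with multiplicative convexity.
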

	
	\begin{proof}
		We show that the equalities hold if $\mathrm{deg}^{\mathtt{b}}\in\Real_+$ exists.
		If there exists $D_{+\infty}\in\Real$ such that 
		$\lim_{t\to+\infty}f(t)\cdot t^{-D_{+\infty}}\,=\,0$, 
		then $\log f(t)$ is less than or equal to $D_{+\infty}\log t$ for all sufficiently large $t\in\Real_+$.
		For all $t_0,t_1\in \Real_+$, by the multiplicative convexity of $f$,
		\begin{eqnarray*}
		\frac{\log f(t_0)-\log f(t_1)}{\log t_0-\log t_1}
		&\leq&\limsup_{t\to +\infty}\frac{\log f(t)-\log f(t_1)}{\log t-\log t_1}\\
		&\leq&\limsup_{t\to +\infty}\frac{D_{+\infty}\log t-\log f(t_1)}{\log t-\log t_1}\\
		&=& D_{+\infty}.
		\end{eqnarray*}
		Denote by 
			$$d_{+\infty}\,=\,\lim_{t_0,t_1\to +\infty}
		\frac{\log f(t_0)-\log f(t_1)}{\log t_0-\log t_1}\,\in\,\Real.$$
		It is easy to see that for any constant $\delta>0$,
		$$\lim_{t\to+\infty} f(t)\cdot t^{-(d_{+\infty}+\delta)}=0.$$
		Consequently,
		$$\mathrm{deg}^{\mathtt{b}}_{+\infty}(f)\,=\,d_{+\infty}.$$
		The equality for $0+$ can be proved in a similar way. 
		We have shown the `only-if' direction.
		The existence of exponent bound leads to the existence of $d_{+\infty}$ and $d_{0+}$ in $\Real$,
		so
		$$d_{0+}-1\,<\mathrm{deg}^{\mathtt{b}}_{0+}(f)\,\leq\, \mathrm{deg}^{\mathtt{b}}_{+\infty}(f)\,<\, d_{+\infty}+1.$$
		This shows the `if' direction.
	\end{proof}

	\begin{example}\ 
	\begin{enumerate}
	\item A \emph{monomial function} on an interval $(a,b)$ is a function of the form $f(t)=Ct^r$ for some constants $C\in \Real_+$
	and $r\in \Real$.	Such a function is \emph{multiplicatively linear} in the sense that for all points
	$t_0,t_1\in(a,b)$ and for every constant $\lambda\in(0,1)$,
			$$f(t_0^{1-\lambda}\cdot t_1^{\lambda})\,=\, f(t_0)^{1-\lambda}\cdot f(t_1)^{\lambda}.$$
	\item A \emph{piecewise monomial function} on an interval $(a,b)$ is a continuous function $f:(a,b)\to(0,+\infty)$ such that for
	finitely many points $a=c_0<c_1<\cdots<c_{n-1}<c_n=b$, the function is a monomial $C_it^{r_i}$ on the subinterval $(c_{i-1},c_i)$
	where $i$ runs over $1,\cdots,n$. Such a continuous function is multiplicatively convex if and only if $r_1\leq r_2\leq\cdots\leq r_n$.
	\item Given any Laurent polynomial 
		$$p(z)\,=\,D\cdot z^n\cdot \prod_{i=1}^{l} (z-b_i)\in \Complex[z,z^{-1}],$$
		with a leading coefficient $D\in\Complex^\times$ and nontrivial zeros $b_i\in \Complex^\times$,
		the function 
			$$M(p(z);\,t)\,=\,|D|\cdot t^{n}\cdot\prod_{i=1}^{l}\max(t,|b_i|),$$
		of the variable $t\in\Real_+$,	is piecewise monomial and multiplicatively convex.
	\end{enumerate}
	\end{example}

\section{Multiplicative convexity and exponent bound}\label{Sec-mConvex-eBounded}
	In this section, we show that residually finite $L^2$--Alexander twists
	result in multiplicatively convex determinant functions with bounded exponents.
	
	\begin{theorem}\label{mConvex-eBounded}
		Given any admissible triple 
		$(\pi,\phi,\gamma)$ over $\Real$ and any square matrix $A$ over $\Complex\pi$, 
		denote by $V:\,\Real_+\to[0,+\infty)$	the regular Fuglede--Kadison determinant function 
		$$V(t)=\mathrm{det}^{\mathtt{r}}_{\mathcal{N}(G)}\left(\kappa(\phi,\gamma,t)(A)\right),$$
		where $G$ is the target group of $\gamma$ 
		and $\kappa(\phi,\gamma,t)$ is the induced change of coefficients.
		
		Suppose that $G$ is finitely generated and residually finite.
		Then $V(t)$ is either constantly zero or 
		multiplicatively convex with exponent bounded.
		Moreover, there exists a constant $R(A,\phi)\in[0,+\infty)$ 
		depending only on $A$ and $\phi$ so that
			$$\mathrm{deg}^{\mathtt{b}}(V)\,\leq\,R(A,\phi).$$
	\end{theorem}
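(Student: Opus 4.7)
The plan is to reduce to a virtually abelian base case via a cofinal tower of quotients of $G$, and then overcome the failure of pointwise convergence of determinants by a perturbation-and-re-twist trick in an auxiliary parameter. Since $(\pi, \phi, \gamma)$ is admissible, $\phi$ factors through some $\xi : G \to \Real$. Using residual finiteness of $G$, I would build a cofinal tower of finite-index normal subgroups $\{M_n\}$ of $G$, replace each by $N_n = M_n \cap \ker(\xi)$ (still cofinal), and set $\Gamma_n = G/N_n$, which embeds into $G/M_n \times G/\ker(\xi)$ and is therefore finitely generated virtually abelian, with $\xi$ descending to $\Gamma_n$. Writing $\psi_n : G \to \Gamma_n$, $\gamma_n = \psi_n \circ \gamma$, and $A_n(t) = \kappa(\phi, \gamma_n, t)(A)$, I handle the base case over $\Gamma_n$ directly: Fourier transform identifies $\mathcal{N}(\Gamma_n)$ with an $L^\infty$-matrix algebra over the dual torus of a finite-index abelian subgroup $\Lambda_n \leq \Gamma_n$, and the regular Fuglede--Kadison determinant becomes a power of the multivariable Mahler measure of $\det A_n(t)(z)$. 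The $t$-twist rescales each coordinate by $z_j \mapsto t^{\xi_{n,j}} z_j$, and multiplicative convexity in $t$ follows from the classical subharmonic fact that the mean of $\log|f|$ over a product torus is convex in the logarithms of the radii.

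For the general case, I fix $T \in \Real_+$ and introduce the positive perturbation $H_{n,\epsilon}(T) = A_n(T)^* A_n(T) + \epsilon \cdot \mathbf{1} \in \Matrix_{p \times p}(\Complex \Gamma_n)$ together with the auxiliary family
$$W_{n,\epsilon}(s, T) = \mathrm{det}^{\mathtt{r}}_{\mathcal{N}(\Gamma_n)}\bigl(\kappa(\gamma_{n*}\phi, \id_{\Gamma_n}, s)(H_{n,\epsilon}(T))\bigr).$$
By the base case applied to the matrix $H_{n,\epsilon}(T)$ over the virtually abelian $\Gamma_n$, each $W_{n,\epsilon}(\cdot, T)$ is multiplicatively convex in $s$. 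A direct calculation of composed twists using $\kappa(\gamma_{n*}\phi, \id_{\Gamma_n}, s) \circ \kappa(\phi, \gamma_n, t) = \kappa(\phi, \gamma_n, st)$ rewrites the twisted matrix as the pushforward $\psi_{n*}(B_G(s, T) + \epsilon \mathbf{1})$ of $B_G(s, T) = A_G(T/s)^* \cdot A_G(sT) \in \Matrix_{p \times p}(\Complex G)$, where $A_G(t) = \kappa(\phi, \gamma, t)(A)$. Lemma \ref{stable-semicontinuous} then yields $\limsup_n W_{n,\epsilon}(s, T) \leq \mathrm{det}^{\mathtt{r}}_{\mathcal{N}(G)}(B_G(s, T) + \epsilon \mathbf{1})$, and the ``moreover'' part gives convergence $W_{n,\epsilon}(1, T) \to \mathrm{det}^{\mathtt{r}}_{\mathcal{N}(G)}(A_G(T)^* A_G(T) + \epsilon \mathbf{1})$ at $s = 1$, since $B_G(1, T) = A_G(T)^* A_G(T)$ is self-adjoint positive. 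Taking $\epsilon \to 0+$ via Lemma \ref{norm-semicontinuous}, together with the multiplicativity of the ordinary Fuglede--Kadison determinant on the product $A_G(T/s)^* \cdot A_G(sT)$, yields $\lim_{\epsilon, n} W_{n,\epsilon}(1, T) = V(T)^2$ and $\limsup_{n,\epsilon} W_{n,\epsilon}(s, T) \leq V(T/s) \cdot V(sT)$.

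The multiplicative midpoint inequality for $W_{n,\epsilon}(\cdot, T)$ at the pair $(s, s^{-1})$ with midpoint $1$ gives $W_{n,\epsilon}(1, T)^2 \leq W_{n,\epsilon}(s, T) \cdot W_{n,\epsilon}(s^{-1}, T)$. Passing to the iterated $\limsup$ yields $V(T)^4 \leq (V(T/s) V(sT))^2$, i.e., $V(T)^2 \leq V(T/s) V(sT)$; setting $T = \sqrt{t_0 t_1}$ and $s = \sqrt{t_1/t_0}$ produces the multiplicative midpoint inequality $V(\sqrt{t_0 t_1})^2 \leq V(t_0) V(t_1)$. Upper semicontinuity of $V$ in $t$ follows from norm-continuity of $t \mapsto \kappa(\phi, \gamma, t)(A)$ and Lemma \ref{norm-semicontinuous}, so Lemma \ref{mMidpointConvex} upgrades midpoint convexity to full multiplicative convexity of $V$.

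The exponent bound is elementary: $\|\kappa(\phi, \gamma, t)(A)\|$ grows at most as a fixed power of $\max(t, t^{-1})$ with exponent $R_0 = R_0(A, \phi)$ determined by the values of $|\phi|$ on the supports of entries of $A$, hence $V(t) \leq C \cdot \max(t, t^{-1})^{p R_0}$ for some constant $C$, and Lemma \ref{degree-bCharacterization} then furnishes $\mathrm{deg}^{\mathtt{b}}(V) \leq R(A, \phi)$ for a suitable $R(A, \phi)$. The hardest step I anticipate is the mismatch between the $\epsilon$-perturbation (needed to keep spectra away from $0$ so that Lemma \ref{stable-semicontinuous} applies uniformly) and the $L^2$-Alexander twist structure: since $H_{n,\epsilon}(T)$ is not itself a twist of any single matrix over $\Complex \pi$, multiplicative convexity in $t$ cannot be read off directly from the perturbed family. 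The workaround of re-twisting the already-perturbed operator by an auxiliary parameter $s$ exploits the symmetry $s \leftrightarrow s^{-1}$ of the limit $V(T/s) V(sT)$, which together with multiplicative convexity in $s$ forces the desired midpoint estimate for $V(t)$.
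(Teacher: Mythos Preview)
Your proposal is correct and follows essentially the same strategy as the paper: reduce to a virtually abelian base case via a cofinal tower of quotients, handle the base case through Mahler measure, and bridge the gap by perturbing to $H_{n,\epsilon}(T)$ and re-twisting in an auxiliary parameter $s$, finally invoking Lemma~\ref{mMidpointConvex}. Two minor points of comparison are worth noting. First, your tower $\Gamma_n = G/(M_n\cap\ker\xi)$ differs from the paper's choice $\Gamma_n = G/\ker\bigl(N_n\to H_1(N_n;\Rational)\bigr)$, but both produce finitely generated virtually abelian, residually finite quotients through which $\phi$ factors, so Lemma~\ref{stable-semicontinuous} applies in either case. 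Second, at the key step the paper observes directly that $W_{n,\epsilon}(s,T)=W_{n,\epsilon}(s^{-1},T)$ (since $H_{n,\epsilon}(T)$ is self-adjoint and $\kappa(\cdot,\cdot,s)(\,\cdot\,)^{*}=\kappa(\cdot,\cdot,s^{-1})(\,\cdot^{*}\,)$), so multiplicative convexity in $s$ immediately gives $W_{n,\epsilon}(1,T)\le W_{n,\epsilon}(s,T)$; this avoids your detour through the product $W_{n,\epsilon}(s,T)\,W_{n,\epsilon}(s^{-1},T)$ and the ensuing square, though your route reaches the same midpoint inequality for $V$.
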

	
	The rest of this section is devoted to the proof of Theorem \ref{mConvex-eBounded}.
	
	\subsection{The degree bound}
	For any $p\times p$ matrix $A$ over $\Complex\pi$, we can decompose $A$ as a unique sum:
		$$A\,=\,\sum_{g\in\pi} g\cdot A_g$$
	where $A_g$ are $p\times p$ matrices over $\Complex$ and only finitely many $A_g$ are nonzero.
	Given any homomorphism $\phi\in \mathrm{Hom}(\pi,\Real)$,
	we define 
		$$R(A,\phi)\,=\,p\cdot\left(\max_{A_g\neq\mathbf{0}} \phi(g)-\min_{A_g\neq\mathbf{0}} \phi(g)\right).$$
	
	The quantity $R(A,\phi)$ behaves well under operations of the matrix and the cohomology class.
	In fact, we observe the following elementary properties. The proof is straightforward so we omit it in this paper.
	\begin{lemma}\
	\begin{enumerate}
		\item For all $A\in\Matrix_{p\times p}(\Complex)\subset\Matrix_{p\times p}(\Complex\pi)$, $R(A,\phi)=0$.
		\item For all $A,B\in\Matrix_{p\times p}(\Complex\pi)$,
			$$R(AB,\phi)\leq R(A,\phi)+R(B,\phi)$$
		and
			$$R(A+B,\phi)\leq \max(R(A,\phi),R(B,\phi))$$
		\item For all $A\in\Matrix_{p\times p}(\Complex\pi)$, and $c\in \Real$, and $\phi,\psi\in \mathrm{Hom}(\pi,\Real)$, 
			$$R(A,c\phi)\,=\,|c|\cdot R(A,\phi)$$
		and
			$$R(A,\phi+\psi)\leq R(A,\phi)+R(A,\psi).$$
		\item Let $\gamma:\pi\to G$ be a group homomorphism. For all $A\in\Matrix_{p\times p}(\Complex\pi)$ and $\xi\in H^1(G;\Real)$,
			$$R(A,\gamma^*\xi)\geq R(\gamma_*A,\xi)$$
	\end{enumerate}
	\end{lemma}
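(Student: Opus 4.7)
The plan is to exploit that $R(A,\phi)$ depends only on $\phi$ and on the support $\mathrm{supp}(A) = \{g \in \pi : A_g \neq \mathbf{0}\}$, so each of the four statements reduces to tracking how the support transforms under the relevant operation, combined with elementary extremum manipulations for $\phi$ restricted to that support. In each case the strategy is: identify a containment of supports, then bound the range $\max\phi - \min\phi$ on the new support in terms of the ranges on the old.

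For (1), a scalar matrix has $\mathrm{supp}(A) \subseteq \{e\}$, and since $\phi$ is a group homomorphism $\phi(e)=0$, forcing max and min to coincide. For the product bound in (2), the expansion $AB = \sum_{g,h} gh \cdot A_g B_h$ yields $\mathrm{supp}(AB) \subseteq \mathrm{supp}(A)\cdot\mathrm{supp}(B)$; any $k$ in this product set decomposes as $k=gh$ with $\phi(k)=\phi(g)+\phi(h)$, so taking maxima over such factorizations and subtracting the analogous minima gives $R(AB,\phi) \leq R(A,\phi) + R(B,\phi)$. For the additive bound, $\mathrm{supp}(A+B) \subseteq \mathrm{supp}(A)\cup\mathrm{supp}(B)$, and comparing the range of $\phi$ over this union with the ranges on the two constituent sets yields the stated inequality.

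Part (3) is a direct consequence of standard extremum identities: $\max(c\phi) = c\max\phi$ for $c \geq 0$ and $c\min\phi$ for $c < 0$, with the dual for $\min$, accounts for the $|c|$-homogeneity; and $\max(\phi+\psi) \leq \max\phi + \max\psi$ together with $\min(\phi+\psi) \geq \min\phi + \min\psi$, applied on the fixed support of $A$, yields $R(A,\phi+\psi) \leq R(A,\phi) + R(A,\psi)$.

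For (4), the decomposition $\gamma_*A = \sum_{h\in G} h\cdot\bigl(\sum_{\gamma(g)=h} A_g\bigr)$ shows $\mathrm{supp}(\gamma_*A) \subseteq \gamma(\mathrm{supp}(A))$; cancellations in $G$ can make this containment strict, but this only strengthens the bound. Since $(\gamma^*\xi)(g) = \xi(\gamma(g))$, extrema of $\xi$ over $\mathrm{supp}(\gamma_*A)$ are dominated by extrema of $\gamma^*\xi$ over $\mathrm{supp}(A)$, giving $R(\gamma_*A,\xi) \leq R(A,\gamma^*\xi)$. There is no substantive obstacle anywhere: the entire lemma is a sequence of routine manipulations of extrema on finite subsets of $\pi$ under a homomorphism $\phi$, which is precisely why the author deems the proof straightforward and omits it from the text.
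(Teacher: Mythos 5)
The paper gives no argument at all for this lemma (it is dismissed as straightforward), so the only question is whether your support-tracking argument actually works. For parts (1), (3), (4) and the product half of (2) it does, and it is surely the intended route: since $R(A,\phi)$ is $p$ times the range of $\phi$ over $\mathrm{supp}(A)=\{g:A_g\neq\mathbf{0}\}$ and the size $p$ is the same on both sides of every asserted inequality, everything reduces to the containments $\mathrm{supp}(AB)\subseteq\mathrm{supp}(A)\cdot\mathrm{supp}(B)$ and $\mathrm{supp}(\gamma_*A)\subseteq\gamma(\mathrm{supp}(A))$ together with the sublinearity of $\phi\mapsto\max_S\phi-\min_S\phi$ on a fixed finite set $S$, exactly as you say (modulo the harmless convention for matrices with empty support).

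The additive half of (2) is a genuine gap, and in fact the step you wave at is false: from $\mathrm{supp}(A+B)\subseteq\mathrm{supp}(A)\cup\mathrm{supp}(B)$ one only gets $R(A+B,\phi)\le p\,\bigl(\max(\max_{\mathrm{supp}A}\phi,\max_{\mathrm{supp}B}\phi)-\min(\min_{\mathrm{supp}A}\phi,\min_{\mathrm{supp}B}\phi)\bigr)$, and the range of $\phi$ over a union is \emph{not} bounded by the maximum of the two ranges unless the $\phi$-interval of one support contains that of the other. Concretely, take $p=1$, $\pi=\Integral=\langle z\rangle$, $\phi(z)=1$, $A=1$ and $B=z^5$: then $R(A,\phi)=R(B,\phi)=0$ while $R(A+B,\phi)=5$, so the inequality $R(A+B,\phi)\leq\max(R(A,\phi),R(B,\phi))$ as printed in the lemma cannot be proved by any argument — it is a slip in the statement (translating the support of $B$ by a group element changes the union's range but not $R(B,\phi)$). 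The honest fix is to record the union bound displayed above (or add a nesting hypothesis); note that nothing else in the paper uses the additive inequality — the quantitative applications are Lemma \ref{eBoundEstimate}, Lemma \ref{topAndBottomEstimates} and Theorem \ref{continuityOfDegree}, which only need parts (3) and (4) — so flagging and correcting it costs nothing downstream. Your write-up should either do that or supply the extra hypothesis; as it stands, "comparing the range over the union with the ranges on the constituent sets" asserts precisely the false comparison.
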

	
	The following lemma can be combined with Lemma \ref{degree-bCharacterization}
	to yield the degree bound, once we have shown that $V(t)$ is multiplicatively convex.	
	
	\begin{lemma}\label{eBoundEstimate}
		Given any admissible triple $(\pi,\phi,\gamma)$ over $\Real$ and any square matrix $A$ over $\Complex\pi$, 
		write
		$$V(t)=\mathrm{det}^{\mathtt{r}}_{\mathcal{N}(G)}\left(\kappa(\phi,\gamma,t)(A)\right)$$
		where $G$ is the target group of $\gamma$. Then the following statement holds true.
		
		For every constant $R'>R(A,\phi)$, there exist constants $D_{+\infty},D_{0+}\in\Real$ such that
		$D_{+\infty}-D_{0+}< R'$. Moreover, 
		$$\lim_{t\to+\infty}V(t)\cdot t^{-D_{+\infty}}\,=\,0,$$
		and 
		$$\lim_{t\to0+} V(t)\cdot t^{-D_{0+}}\,=\,0.$$
	\end{lemma}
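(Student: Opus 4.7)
The plan is to deduce both asymptotic limits directly from a crude operator-norm estimate for the twisted matrix $A(t) = \kappa(\phi,\gamma,t)(A)$; neither multiplicative convexity nor any residual-finiteness hypothesis on $G$ is needed here.

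First, I would decompose $A = \sum_{g \in \pi} g \cdot A_g$ as a finite sum with $A_g \in \Matrix_{p\times p}(\Complex)$, and set $\phi_{\max} = \max\{\phi(g) : A_g \neq \mathbf{0}\}$ and $\phi_{\min} = \min\{\phi(g) : A_g \neq \mathbf{0}\}$, so that by definition $R(A,\phi) = p(\phi_{\max} - \phi_{\min})$. Applying $\kappa(\phi,\gamma,t)$ produces the operator $A(t) = \sum_g t^{\phi(g)}\,\gamma(g)\cdot A_g$ on $\ell^{2}(G)^{\oplus p}$. Since left multiplication by each $\gamma(g)$ is an isometry of $\ell^{2}(G)$, each summand has operator norm at most $t^{\phi(g)}\|A_g\|$, and setting $C = \sum_g \|A_g\|$ yields the estimates $\|A(t)\| \leq C\cdot t^{\phi_{\max}}$ for $t \geq 1$ and $\|A(t)\| \leq C\cdot t^{\phi_{\min}}$ for $t \leq 1$.

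Next, I would invoke the standard inequality $\mathrm{det}^{\mathtt{r}}_{\mathcal{N}(G)}(B) \leq \|B\|^p$ for every $p \times p$ matrix $B$ over $\mathcal{N}(G)$. This is automatic when $B$ is not of full rank or not of determinant class, since then the left-hand side vanishes; otherwise it follows from writing $\log\mathrm{det}_{\mathcal{N}(G)}(B) = \tfrac{1}{2}\int_{0}^{\|B\|^{2}}\log(\lambda)\,d\mu(\lambda)$ against the spectral measure $\mu$ of $B^{*}B$, which has total mass $p$ and support contained in $[0,\|B\|^{2}]$. Applied to $B = A(t)$, this converts the previous norm estimates into $V(t) \leq C^{p}\, t^{p\phi_{\max}}$ for $t \geq 1$ and $V(t) \leq C^{p}\, t^{p\phi_{\min}}$ for $t \leq 1$.

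Finally, given any $R' > R(A,\phi)$, I would choose $\delta \in \bigl(0,\,(R' - R(A,\phi))/2\bigr)$ and set $D_{+\infty} = p\phi_{\max} + \delta$ and $D_{0+} = p\phi_{\min} - \delta$. The bounds above then give $V(t)\cdot t^{-D_{+\infty}} \leq C^{p}\cdot t^{-\delta} \to 0$ as $t \to +\infty$ and $V(t)\cdot t^{-D_{0+}} \leq C^{p}\cdot t^{\delta} \to 0$ as $t \to 0+$, while $D_{+\infty} - D_{0+} = R(A,\phi) + 2\delta < R'$. The only point that needs a moment's verification is the textbook inequality $\mathrm{det}^{\mathtt{r}}_{\mathcal{N}(G)}(B) \leq \|B\|^p$; I do not anticipate any real obstacle.
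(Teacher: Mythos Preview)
Your proof is correct and follows essentially the same approach as the paper: bound the operator norm of $A(t)$ by $C\,t^{\phi_{\max}}$ (resp.\ $C\,t^{\phi_{\min}}$) for large (resp.\ small) $t$, invoke $\mathrm{det}^{\mathtt{r}}_{\mathcal{N}(G)}(B)\leq\|B\|^{p}$, and then choose $D_{+\infty}$ and $D_{0+}$ a small $\delta$ beyond $p\phi_{\max}$ and $p\phi_{\min}$. The only differences are cosmetic bookkeeping in the choice of $\delta$; your write-up tracks the constants a bit more explicitly than the paper does.
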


	\begin{proof}
		We adopt the notations at the beginning of this subsection. Given $R'>R(A,\phi)$, we denote by $5\delta$
		the difference $R'-R(A,\phi)$. Take
			$$D_{+\infty}\,=\,2\delta+p\cdot\max_{A_g\neq\mathbf{0}}\,\phi(g),$$
		and 
			$$D_{0+}\,=\,-2\delta+p\cdot\min_{A_g\neq\mathbf{0}}\,\phi(g).$$
		For sufficiently large $t\in\Real_+$, the operator norm of $t^{-D_{+\infty}+\delta}\cdot \kappa(\phi,\gamma,t)(A)$ is bounded by
		$1$. Therefore, 
		$$0\,\leq\,\limsup_{t\to+\infty}V(t)\cdot t^{-D_{+\infty}}
		\,\leq\, 1^p\cdot \lim_{t\to+\infty} \mathrm{det}^{\mathtt{r}}_{\mathcal{N}(G)}(t^{-\delta}\cdot\mathbf{1})
		\,=\,\lim_{t\to+\infty}t^{-p\delta}\,=\,0.$$
		This yields the asserted limit for $t\to+\infty$. The limit for $t\to0+$ can be proved in a similar way.
	\end{proof}

	\subsection{Multiplicative convexity for virtually abelian twists}
	In this section, we prove Theorem \ref{mConvex-eBounded} under the assumption that $G$
	is finitely generated and virtually abelian. 
	
	Given an admissible triple $(\pi,\phi,\gamma)$ over $\Real$ and a parameter value $t\in\Real_+$,
	for any $p\times p$ matrix $A$ of $\Complex\pi$, we define
		$$A_G(t)\,=\,\kappa(\phi,\gamma,t)(A)\in\Matrix_{p\times p}(\Complex G)$$
	and write
		$$V(t)\,=\,\mathrm{det}^{\mathtt{r}}_{\mathcal{N}(G)}(A_G(t)).$$
	
	\begin{proposition}\label{mConvex-eBounded-VA}
		Let $(\pi,\phi,\gamma)$ is an admissible triple over $\Real$.
		Suppose that $G$ is finitely generated and virtually abelian.
		Then for every matrix $A\in \Matrix_{p\times p}(\Complex\pi)$, 
		the function $V(t)$ is multiplicatively convex.
	\end{proposition}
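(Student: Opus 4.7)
The plan is to reduce to the case $G = \Integral^n$ and then exhibit $V(t)$ as a Mahler-measure-type integral whose convexity in $\log t$ follows from classical pluripotential theory. First, choose a finite-index torsion-free abelian subgroup $G_0 \cong \Integral^n$ of $G$, and set $\pi_0 = \gamma^{-1}(G_0)$, of finite index $k = [\pi:\pi_0]$ in $\pi$. Using coset representatives $g_1,\ldots,g_k$ of $\pi_0$ in $\pi$, the matrix $A \in \Matrix_{p\times p}(\Complex\pi)$ induces a matrix $\tilde A \in \Matrix_{pk\times pk}(\Complex\pi_0)$. A direct comparison of the two twists, element by element on $\pi$, shows that the induced matrix of $\kappa(\phi,\gamma,t)(A)$ and the matrix $\kappa(\phi|_{\pi_0},\gamma|_{\pi_0},t)(\tilde A)$ differ only by conjugation by the invertible diagonal matrix $I_p \otimes \mathrm{diag}(t^{\phi(g_1)},\ldots,t^{\phi(g_k)})$; in particular they have the same regular Fuglede--Kadison determinant over $\mathcal{N}(G_0)$. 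Combined with the standard $k$-th power relation $\mathrm{det}^{\mathtt{r}}_{\mathcal{N}(G_0)} = (\mathrm{det}^{\mathtt{r}}_{\mathcal{N}(G)})^{k}$ arising from the decomposition $\ell^2(G) \cong \ell^2(G_0)^{\oplus k}$, this gives
\begin{equation*}
V(t)^{k} \,=\, \mathrm{det}^{\mathtt{r}}_{\mathcal{N}(G_0)}\bigl(\kappa(\phi|_{\pi_0},\gamma|_{\pi_0},t)(\tilde A)\bigr).
\end{equation*}
Since taking a positive $k$-th root preserves multiplicative convexity, it suffices to prove the statement with $G_0 = \Integral^n$ in place of $G$ and $\tilde A$ in place of $A$.

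Assume henceforth that $G = \Integral^n$. Fourier duality identifies $\mathcal{N}(\Integral^n)$ with $L^\infty(T^n)$ (with normalised Haar measure), sending a matrix $M$ over $\Complex\Integral^n$ to a matrix $M(z_1,\ldots,z_n)$ of Laurent polynomials. The classical Mahler-measure formula
\begin{equation*}
\mathrm{det}^{\mathtt{r}}_{\mathcal{N}(\Integral^n)}(M) \,=\, \exp\!\left(\int_{T^n} \log\bigl|\det M(z)\bigr|\, d\mu(z)\right)
\end{equation*}
holds whenever $\det M$ is not identically zero, and the regular determinant is $0$ otherwise. Writing $\phi = \xi\circ\gamma$ for some $\xi \in \Hom(\Integral^n,\Real)$ with components $\xi_i = \xi(e_i)$, unwinding $\kappa(\phi,\gamma,t)$ yields $\det(A_G(t))(z) = q(t^{\xi_1}z_1,\ldots,t^{\xi_n}z_n)$, where $q(z) = \det(\gamma_* A) \in \Complex[z_1^{\pm},\ldots,z_n^{\pm}]$. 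If $q \equiv 0$ then $V \equiv 0$ and the conclusion is trivial; otherwise, setting $s = \log t$,
\begin{equation*}
\log V(e^s) \,=\, \int_{T^n} \log\bigl|q(e^{s\xi_1+i\theta_1},\ldots,e^{s\xi_n+i\theta_n})\bigr|\, \frac{d\theta_1\cdots d\theta_n}{(2\pi)^n}.
\end{equation*}

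The remaining task is to show the right-hand side is convex in $s \in \Real$. This quantity is precisely the restriction to the line $s \mapsto s(\xi_1,\ldots,\xi_n)$ of the Ronkin function $N_q(v) = \int_{T^n} \log|q(e^{v_1+i\theta_1},\ldots,e^{v_n+i\theta_n})|\, d\mu(\theta)$ of the Laurent polynomial $q$, and convexity of $N_q$ on $\Real^n$ is classical: it follows from the plurisubharmonicity of $\zeta \mapsto \log|q(e^{\zeta_1},\ldots,e^{\zeta_n})|$ on $\Complex^n$, together with the standard averaging principle that the mean of a plurisubharmonic function over the imaginary parts of its arguments, with real parts held fixed, is a convex function of those real parts. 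The main obstacle I anticipate is bookkeeping in the first step rather than any serious analytic difficulty: one must carefully set up the induction of matrices, track how coset representatives introduce the diagonal conjugation factor $\mathrm{diag}(t^{\phi(g_i)})$, and appeal correctly to the $k$-th power relation between Fuglede--Kadison determinants under a finite-index inclusion. Once the reduction is cleanly in place, the Ronkin convexity is a robust classical input that closes the argument.
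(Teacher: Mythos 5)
Your argument is correct in substance, and its overall skeleton (pass to a finite-index free abelian subgroup, use the $k$-th power behaviour of the regular Fuglede--Kadison determinant under the finite-index reduction, then compute via the Mahler-measure formula) is the same as the paper's. Where you genuinely diverge is in the analytic core of the free abelian case. The paper decomposes $\phi=r_1\phi_1+\cdots+r_d\phi_d$ over a rational basis of the $\Rational$-span of $\phi(\pi)$, first treats rational classes by a one-variable Jensen-formula computation (for fixed torus angles the one-variable Mahler measure $t\mapsto M(q_\theta(t^{mr_1}z))$ is explicitly piecewise monomial and multiplicatively convex, and one integrates over the remaining angles), and then reaches real classes by approximating the exponent vector by rational ones and invoking Boyd's continuity of the Mahler measure in the coefficients. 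You instead observe that $\log V(e^s)$ is the restriction of the Ronkin function $N_q$ of $q=\mathrm{Det}(\gamma_*A)$ to the line $s\mapsto s(\xi_1,\dots,\xi_n)$, and appeal to the classical convexity of $N_q$ coming from plurisubharmonicity of $\zeta\mapsto\log|q(e^{\zeta_1},\dots,e^{\zeta_n})|$ averaged over imaginary parts. This is a cleaner route: it handles irrational classes in one stroke, with no rational approximation and no continuity-of-Mahler-measure input, at the cost of importing a (standard) pluripotential-theoretic fact rather than the elementary Jensen computation. Note that admissibility is exactly what makes your identity $\kappa(\phi,\gamma,t)=\sigma_t\circ\gamma_*$ (substitution $z_i\mapsto t^{\xi_i}z_i$) work, so it is worth stating that explicitly.

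One small repair is needed in the reduction step: as written, your comparison of the restricted matrix of $\kappa(\phi,\gamma,t)(A)$ with $\kappa(\phi|_{\pi_0},\gamma|_{\pi_0},t)(\tilde A)$, and the power relation you quote from $\ell^2(G)\cong\ell^2(G_0)^{\oplus[G:G_0]}$, use the index $[G:G_0]$, which can exceed $k=[\pi:\pi_0]$ when $\gamma$ is not surjective (the two matrices then do not even have the same size). The fix is the one the paper performs first: since the twisted matrix has entries supported in $\gamma(\pi)$, induction invariance of the regular Fuglede--Kadison determinant lets you replace $G$ by $\gamma(\pi)$ (and $G_0$ by $\gamma(\pi)\cap G_0=\gamma(\pi_0)$) without changing $V(t)$; after that $[\gamma(\pi):\gamma(\pi_0)]=[\pi:\pi_0]=k$, your diagonal-conjugation identification is valid verbatim, and the rest of your argument goes through.
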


	The following lemma treats the essential case where $G$ is finitely generated and free abelian.
	
	\begin{lemma}\label{mConvex-eBounded-FA}
		Let $(\pi,\phi,\gamma)$ be an admissible triple over $\Real$.
		Suppose that $\gamma$ is an isomorphism onto a finitely generated free abelian group $G$.
		Then for every $A\in\Matrix_{p\times p}(\Complex\pi)$,
		the function $V(t)$ is multiplicatively convex.
	\end{lemma}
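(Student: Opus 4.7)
The plan is to identify everything explicitly via the Fourier transform and then reduce multiplicative convexity to a classical subharmonic-function fact. Since $\gamma$ is an isomorphism we may regard $\pi=G=\Integral^k$ and $\phi$ as a real character $\bar\phi \in \mathrm{Hom}(\Integral^k,\Real)$, determined by the values $\phi_i=\bar\phi(e_i)$ on a fixed basis. Fourier transform identifies $\Complex G$ with the Laurent polynomial ring $\Complex[z_1^{\pm 1},\ldots,z_k^{\pm 1}]$ and $\mathcal{N}(G)$ with $L^\infty(T^k)$, where $T^k=\{(w_1,\ldots,w_k):|w_i|=1\}$ is the $k$-torus and the canonical trace is integration against the Haar probability measure. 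As recorded in \cite{Lueck-book}, for any full-rank matrix $B\in\Matrix_{p\times p}(\Complex G)$ one has
$$\mathrm{det}^{\mathtt{r}}_{\mathcal{N}(G)}(B)\,=\,M(\det\nolimits_{\mathrm{alg}}(B))\,=\,\exp\int_{T^k}\log|\det\nolimits_{\mathrm{alg}}(B)(z)|\,dz,$$
where $\det_{\mathrm{alg}}$ is the ordinary algebraic determinant and $M$ is the Mahler measure. Writing $P(z)=\det_{\mathrm{alg}}(A)=\sum_{\vec m}c_{\vec m}z^{\vec m}$, the $L^2$--Alexander twist acts on $A$ by rescaling each coefficient, so $\det_{\mathrm{alg}}(A_G(t))=\sum_{\vec m}c_{\vec m}t^{\bar\phi(\vec m)}z^{\vec m}$.

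If $P\equiv 0$, then $V\equiv 0$, which is trivially multiplicatively convex. Otherwise, substituting $z_i=t^{\phi_i}w_i$ with $|w_i|=1$ yields
$$V(t)\,=\,\exp\int_{T^k}\log\bigl|P(t^{\phi_1}w_1,\ldots,t^{\phi_k}w_k)\bigr|\,dw,$$
which is positive and finite for every $t>0$ since $\log|P|$ is locally integrable on any product torus. Setting $F(\sigma)=\log V(e^\sigma)$, the goal becomes to show that $F:\Real\to\Real$ is convex.

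To establish convexity, I would extend $F$ to a complex variable $s=\sigma+i\tau$ by the same formula:
$$\tilde F(s)\,=\,\int_{T^k}\log\bigl|P(e^{s\phi_1}w_1,\ldots,e^{s\phi_k}w_k)\bigr|\,dw.$$
For each fixed $w$, the map $s\mapsto P(e^{s\phi_1}w_1,\ldots,e^{s\phi_k}w_k)$ is an entire holomorphic function of $s$, being a finite sum of entire exponential monomials $c_{\vec m}e^{s\bar\phi(\vec m)}w^{\vec m}$, and hence its log-modulus is subharmonic in $s$. A standard Fubini-type argument then shows that $\tilde F$ itself is subharmonic on $\Complex$. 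Meanwhile, Haar-invariance under the rotation $w_i\mapsto e^{i\tau\phi_i}w_i$ gives $\tilde F(\sigma+i\tau)=\tilde F(\sigma)$ for every $\tau$, so $\tilde F$ depends only on $\mathrm{Re}(s)$. A subharmonic function depending only on one real coordinate is automatically convex in that coordinate: its distributional Laplacian reduces to $F''(\sigma)\geq 0$. Hence $F$ is convex and $V(t)=e^{F(\log t)}$ is multiplicatively convex.

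The main—but still minor—obstacles lie in the technical justifications: checking that the integral defining $\tilde F$ is locally finite, which follows from the observation that a nonzero Laurent polynomial cannot vanish identically on any positive product torus (its Fourier coefficients would all vanish), combined with the standard integrability of $\log|z-z_0|$; and verifying that averaging a family of subharmonic functions against a finite measure preserves subharmonicity, via Fubini applied to the submean inequality. Once these routine checks are in place, the heart of the proof is simply the interplay between the Mahler-measure formula and rotation-invariance of the Haar measure on $T^k$, which requires no further heavy machinery.
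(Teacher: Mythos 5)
Your proposal is correct, but it takes a genuinely different route from the paper. Both arguments start from the same identification $V(t)=M\bigl(P(t^{\phi_1}z_1,\dots,t^{\phi_k}z_k)\bigr)$ of the regular Fuglede--Kadison determinant with a Mahler measure of the twisted algebraic determinant (the paper invokes \cite[Lemma 2.6]{DFL-torsion} for exactly this). From there the paper proceeds more elementarily: it first reduces to the case of a class defined over $\Rational$, chooses coordinates on $G$ so that the twist acts on a single variable $z_l$, applies Jensen's formula to see that for each fixed $(\theta_1,\dots,\theta_{l-1})$ the one-variable Mahler measure $t\mapsto M\bigl(q_{\theta_1,\dots,\theta_{l-1}}(t^{mr_1}z)\bigr)$ is piecewise monomial and multiplicatively convex, integrates this fiberwise convexity, and then handles a general real class by approximating the coefficient vector $\vec r$ by rational vectors and invoking Boyd's theorem on continuity of the Mahler measure in the coefficients. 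You instead treat an arbitrary real class in one stroke: for fixed $w\in T^k$ the map $s\mapsto P(e^{s\phi_1}w_1,\dots,e^{s\phi_k}w_k)$ is an entire exponential polynomial, so $\log|\cdot|$ of it is subharmonic; averaging over $T^k$ (with the local uniform upper bound on $|P|$ giving, via Fatou and Fubini, upper semicontinuity and the submean inequality of the average, and with finiteness of the scaled Mahler measure ruling out $-\infty$) produces a subharmonic function which, by rotation invariance of Haar measure, depends only on $\mathrm{Re}\,s$; a three-lines/maximum-principle argument then yields convexity of $F(\sigma)=\log V(e^{\sigma})$. What your approach buys is the elimination of both the rational-approximation step and the appeal to Boyd's continuity theorem, at the cost of the (standard but nontrivial) facts about subharmonicity of parameter integrals; the paper's route stays within Jensen's formula and coefficientwise continuity. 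Do make the two flagged checks explicit if you write this up: the upper semicontinuity of $\tilde F$ (not just the submean property), and the observation that $P_w\not\equiv 0$ off a null set of $w$, which together justify calling $\tilde F$ subharmonic before applying the three-lines argument.
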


	\begin{proof}
		For any  admissible triple $(\pi,\phi,\gamma)$ over $\Real$,
		the image $\phi(\pi)$ is finitely generated as $G$ is finitely generated and free abelian.
		Take a basis $r_1,\cdots,r_d\in\Real_+$ of 
		the $\Rational$-vector space spanned by $\phi(\pi)\subset\Real$.
		Possibly after dividing each $r_i$ by a positive integer, we can decompose $\phi$ as a sum:
			$$\phi\,=\,r_1\phi_1+\cdots+r_d\phi_d$$
		where $\phi_i$ are homomorphisms in $\mathrm{Hom}(\pi,\Integral)$.
		We fix such a basis for the rest of the proof.
		Consider a multivariable version of twist as follows.
		Given any vector $\vec{t}=(t_1,\cdots,t_d)\in \Real_+^d$, there is a homomorphism of rings:
		$$\kappa(\phi,\gamma,\vec{t}):\,\Integral\pi\longrightarrow \Real G$$
		defined uniquely by
			$$\kappa(\phi,\gamma,\vec{t})(g)\,=\,t_1^{\phi_1(g)}\cdots t_d^{\phi_d(g)}\gamma(g)$$
		for all $g\in\pi$ via linear extension over $\Integral$.
		There are induced homomorphisms between matrix algebras over $\Complex\pi$ and $\Complex G$ as before.
		We define
			$$A_G(\vec{t})\,=\,\kappa(\phi,\gamma,\vec{t})(A)\,\in\, \Matrix_{p\times p}(\Complex G).$$
		Denote
			$$W(\vec{t})\,=\,\mathrm{det}^{\mathtt{r}}_{\mathcal{N}(G)}\left(A_G(\vec{t})\right).$$
		Then
			$$V(t)\,=\,W((t^{r_1},\cdots,t^{r_d})).$$
		
		On the other hand, we identify $A_G(\vec{t})$ as a family of $p\times p$ matrices over 
		the multivariable Laurent polynomial ring $\Complex[z_1^{\pm1},\cdots,z_l^{\pm1}]$, where $l$ is the rank of $G$.
		Denote by $\vec{1}$ is the diagonal vector $(1,\cdots,1)\in\Real_+^d$.
		If we write the Laurent polynomial matrix at $\vec{1}$ as:
			$$A_G(\vec{1})\,=\,A_G(\vec{1})\,(z_1,\cdots,z_l),$$
		then at $\vec{t}$ the Laurent polynomial matrix can be computed by:
			$$A_{G}(\vec{t})\,=\,A_{G}(\vec{1})\,
			(\tilde{t}_1z_1,\cdots,\tilde{t}_lz_l)$$
		where, for $j$ running over $1,\cdots,l$,
			$$\tilde{t}_j=t_1^{\phi_1(z_j)}\cdot\cdots\cdot t_d^{\phi_d(z_j)}.$$
		In fact, the relation can be checked by looking at the monomials in each entry of $A_{G}(\vec{1})$.
		The effect of the twist is that in any monomial, 
		each $z_j$ that appears contributes an exponent $\phi_i(z_j)$ to the associated coefficient $t_i$.
		
		The value of $W(\vec{t})$ can be computed by the (multiplicative) Mahler measure 
		of the usual determinant of the Laurent polynomial matrix $A_G(\vec{t})$.
		Precisely, the usual determinant gives rise to a Laurent polynomial for the square matrix at $\vec{1}$:
			$$p_A(z_1,\cdots,z_l)\,=\mathrm{Det}_{\Complex[z_1^{\pm1},\cdots,z_l^{\pm1}]}\left(A_G(\vec{1})\right),$$
		so
			$$p_A(\tilde{t}_1z_1,\cdots,\tilde{t}_lz_l)\,=\mathrm{Det}_{\Complex[z_1^{\pm1},\cdots,z_l^{\pm1}]}\left(A_G(\vec{t})\right).$$
		By \cite[Lemma 2.6]{DFL-torsion}, (cf.~\cite[Exercise 3.8]{Lueck-book} and \cite[Section 1.2]{Raimbault}),
		if $p_A$ is not the zero polynomial,
		\begin{eqnarray*}
			W(\vec{t})&=&M(p_A(\tilde{t}_1z_1,\cdots,\tilde{t}_lz_l))\\
			&=&\exp\left[\frac{1}{(2\pi)^l}\cdot\int_0^{2\pi}\cdots\int_0^{2\pi}
			\log\left|p_A(\tilde{t}_1e^{\mathbf{i}\theta_1},\cdots,\tilde{t}_l e^{\mathbf{i}\theta_l})\right|\ud\theta_1,\cdots\ud\theta_l\right].
		\end{eqnarray*}
		
		Note that if $p_A$ is the zero polynomial, then $W(\vec{t})$ and $V(t)$ are constantly zero, so the multiplicative convexity of $V(t)$ holds
			in this trivial case. We assume in the rest of the proof that $p_A$ is not the zero polynomial.
		
		First consider the case when $(\pi,\phi,\gamma)$ is an admissible triple over $\Rational$.
		In this case, $d$ is at most $1$. We can assume that $d$ equals $1$, 
		since otherwise $V(t)$ is a constant function.
		There is a splitting short exact sequence of free abelian groups:
			$$1\longrightarrow \gamma(\mathrm{Ker}(\phi))\longrightarrow G\stackrel{\phi\circ\gamma^{-1}}{\longrightarrow} \phi(\pi)\longrightarrow 1.$$
		We may choose a basis of the free abelian group $G$ such that
		$\phi(z_l)=mr_1$ for some nonzero integer $m$ and $\phi(z_i)=0$ for all other $z_i$.
		For any given values $\theta_1,\cdots,\theta_{l-1}\in[0,2\pi]$, we introduce the notations
			$$q_{\theta_1,\cdots,\theta_{l-1}}(z)\,=\,p_A(e^{\mathbf{i}\theta_1},\cdots,e^{\mathbf{i}\theta_{l-1}},z)\in\Complex[z,z^{-1}],$$
		and
			$$v_{\theta_1,\cdots,\theta_{l-1}}(t)\,=\,\log M(q_{\theta_1,\cdots,\theta_{l-1}}(t^{mr_1}z)).$$
		Then
		\begin{eqnarray*}
			\log V(t)&=&
			\log W(t^{r_1})\\
			&=&
			\frac{1}{(2\pi)^l}\cdot\int_0^{2\pi}\cdots\int_0^{2\pi}
			\log\left|p_A(e^{\mathbf{i}\theta_1},\cdots,e^{\mathbf{i}\theta_{l-1}},t^{mr_1}e^{\mathbf{i}\theta_l})\right|\,\ud\theta_1\cdots\ud\theta_l\\
			&=&\frac{1}{(2\pi)^{l-1}}\cdot\int_0^{2\pi}\cdots\int_0^{2\pi}
			v_{\theta_1,\cdots,\theta_{l-1}}(t)\,\ud\theta_1\cdots\ud\theta_{l-1}.
		\end{eqnarray*}
		For any one-variable Laurent polynomial $q\in\Complex[z,z^{-1}]$, the Mahler measure can be 
		computed using Jensen's formula:
			$$M(q(z))=|D|\cdot\prod_{i=1}^{l}\max(1,|b_i|),$$
		where the constants $D\in\Complex$ and $n\in\Integral$ and $b_i\in\Complex$
		are given by any factorization
			$$q(z)=D\cdot z^n\cdot \prod_{i=1}^{l} (z-b_i)\in \Complex[z,z^{-1}].$$
		It is evident that for any such $q$, 
		the following function in $t\in\Real_+$ is multiplicatively convex:
			$$M(q(t^{mr_1}z))\,=\,
			|D|\cdot t^{nmr_1}\cdot\prod_{i=1}^{l}\max(t^{mr_1},|b_i|),$$
		possibly constantly zero if $q$ is $0$.
		Therefore, for all pairs of distinct points $T_0,T_1\in\Real_+$, and all constants $0<\lambda<1$,
		we have the comparison:
			$$(1-\lambda)\cdot v_{\theta_1,\cdots,\theta_{l-1}}(T_0)+\lambda\cdot v_{\theta_1,\cdots,\theta_{l-1}}(T_1)\,
			\geq\,v_{\theta_1,\cdots,\theta_{l-1}}(T_0^{1-\lambda}\cdot T_1^\lambda).$$
		Integrating both sides and taking the exponential yields
			$$V(T_0)^{1-\lambda}\cdot V(T_1)^\lambda\,\geq\,V(T_0^{1-\lambda}\cdot T_1^\lambda).$$
		In other words, $V(t)$ is multiplicatively convex.

		For the general case over $\Real$, denote by
		$\vec{r}$ the vector $(r_1,\cdots,r_d)\in\Real_+^d.$ Take a sequence of 
		rational vectors $\{\,\vec{r}^{(n)}\in\Rational_+^d\,\}$ which converges to $\vec{r}$ in $\Real^d_+$ as $n$ tends to infinity.
		Observe that for each $\vec{r}^{(n)}$, the function 
			$$V_n(t)\,=\, W((t^{r^{(n)}_1},\cdots,t^{r^{(n)}_d}))$$
		is equal to	the regular Fuglede-Kadison determinant of the matrix
			$$\kappa(\phi^{(n)},\gamma,t)(A)\,\in\,\Matrix_{p\times p}(\Complex G),$$
		where
			$$\phi^{(n)}=r_1^{(n)}\phi_1+\cdots+r_d^{(n)}\phi_d$$
		is a homomorphism in $\mathrm{Hom}(\pi,\Rational)$. Then $V_n(t)$ are multiplicatively convex
		by the rational case that we have proved.
		On the other hand, as $\vec{t}$ varies over $\Real_+^d$, the coefficients of the Laurent polynomials 
		$p_A(\tilde{t}_1z_1,\cdots,\tilde{t}_lz_l)$ varies continuously, so the Mahler measure of the Laurent polynomials
		varies continuously by D.~Boyd \cite{Boyd}.
		In particular, for every $t\in\Real_+$, 
		$$\lim_{n\to\infty} V_n(t)\,=\,V(t).$$
		Given any constants $T_0,T_1\subset\Real_+$ and $0<\lambda<1$, we have shown the multiplicative
		convexity for the rational case:
			$$V_n(T_0)^{1-\lambda}\cdot V_n(T_1)^{\lambda}\geq V_n(T_0^{1-\lambda}\cdot T_1^{\lambda}).$$
		Taking the limit as $n\to\infty$,
			$$V(T_0)^{1-\lambda}\cdot V(T_1)^{\lambda}\geq V(T_0^{1-\lambda}\cdot T_1^{\lambda}).$$
		In other words, the function $V(t)$ is multiplicatively convex.
		This completes the proof.
	\end{proof}

	\begin{proof}[{Proof of Proposition \ref{mConvex-eBounded-VA}}]
		Take a free abelian subgroup $\tilde{G}$ of $\tilde{\pi}$ of finite index, which is hence finitely generated.
		Denote by $\tilde{\pi}$ the preimage $\gamma^{-1}(\tilde{G})$.
		Take restrictions $\tilde{\phi}$, $\tilde{\gamma}$ 
		of given homomorphisms to $\tilde{\pi}$ accordingly.
		The restriction of $A$ to $\Complex\tilde{\pi}$, denoted as $\mathrm{res}^{\tilde\pi}_\pi A$,
		is a square matrix over $\Complex\tilde{\pi}$	of size $p\cdot[\pi:\tilde{\pi}]$.
		We observe that the operation of restriction commutes with $\kappa(\gamma,\phi,t)$ and $*$.
		Denote by $\tilde{V}(t)$ the corresponding determinant function for the admissible triple $(\tilde{\pi},\tilde{\phi},\tilde{\gamma})$
		and the matrix $\mathrm{res}^{\tilde\pi}_\pi A$.
		By basic properties of regular Fuglede--Kadison determinants,
		\begin{eqnarray*}
			V(t)&=&\mathrm{det}^\mathtt{r}_{\mathcal{N}(G)}(A_G(t))\\
			&=&\mathrm{det}^\mathtt{r}_{\mathcal{N}(\gamma(\pi))}\left(\mathrm{res}^{\gamma(\pi)}_G\,(A_G(t))\right)\\
			&=&\mathrm{det}^\mathtt{r}_{\mathcal{N}(\tilde{G})}\left(\mathrm{res}^{\tilde{G}}_G\,(A_G(t))\right)^{1/[\gamma(\pi):\tilde{G}]}\\
			&=&\mathrm{det}^\mathtt{r}_{\mathcal{N}(\tilde{G})}\left((\mathrm{res}^{\tilde\pi}_\pi A)_{\tilde{G}}(t)\right)^{1/[\pi:\tilde{\pi}]}\\
			&=&\tilde{V}(t)^{1/[\pi:\tilde{\pi}]}.
		\end{eqnarray*}
		
		Note that $\tilde{V}(t)$ is constantly zero
		if and only if $V(t)$ is constantly zero. Suppose that $\tilde{V}(t)$ is not constantly zero.
		By Lemma \ref{mConvex-eBounded-VA}, the function $\tilde{V}(t)$ is multiplicatively convex,
		so $V(t)$ is multiplicatively convex as well.
		This completes the proof.
	\end{proof}

	\subsection{Multiplicative convexity for residually finite twists}
	
	Let $(\pi,\phi,\gamma)$ be an admissible triple over $\Real$.
	Suppose that the target group $G$ of $\gamma$ is finitely generated and residually finite. 
	Take a cofinal tower of normal finite index subgroups of $G$:
		$$G\geq N_1\geq N_2\geq \cdots\geq N_n\geq\cdots.$$
	Here the tower being cofinal means that
		$$\bigcap_{n=1}^\infty N_n\,=\,\{\,\mathrm{id}_G\,\}.$$
	Fix a homomorphism $G\to \Real$ via which $\phi$ factors through $\gamma$. 
	Denote by $K_n$ the kernel of $N_n\to H_1(N_n;\Rational)$, which remains normal in $G$. 
	Let
		$$\Gamma_n\,=\,G\,/\,K_n.$$
	There are induced homomorphisms by the composition of $\gamma$ and the quotient $G\to\Gamma_n$,
	denoted as
		$$\gamma_n:\pi\to\Gamma_n.$$
	It is clear that $\Gamma_n$ are all finitely generated and virtually abelian.
	Therefore, we obtain a tower of admissible triples over $\Real$:
		$$\{(\pi,\phi,\gamma_n)\}_{n\in\Natural}$$
	with finitely generated virtually abelian targets.
	
	Given any $p\times p$ matrix $A$ over $\Complex\pi$, and any value of parameter $T\in\Real_+$,
	and any constant $\epsilon\in[0,+\infty)$,
	we introduce a positive operator on $\ell^2(\Gamma_n)^{\oplus p}$:
		$$H_{n,\epsilon}(T)\,=\,\left(\kappa(\phi,\gamma_n,T)(A)\right)^*\left(\kappa(\phi,\gamma_n,T)(A)\right)
		+\epsilon\cdot\mathbf{1}$$
	which is expressed as a $p\times p$ matrix over $\Complex\Gamma_n$.
	When the subscript $n$ is replaced with the symbol $\infty$,
	we adopt the convention that $\Gamma_\infty=G$ and $\gamma_\infty=\gamma$.

	\begin{proof}[Proof of Theorem \ref{mConvex-eBounded}]
		Given an admissible triple $(\pi,\phi,\gamma)$ over $\Real$ and a square matrix $A$ over $\Complex\pi$.
		We adopt the assumptions and notations of this subsection. Possibly after replacing $G$ with the image of $\gamma$,
		which does not affect the value of the determinant,
		we may further assume that $\gamma$ is surjective.
		Then there are uniquely induced homomorphisms $\gamma_{n*}\phi\in\mathrm{Hom}(\Gamma_n,\Real)$
		whose pull-backs through $\gamma$ are $\phi$, and $(\Gamma_n,\mathrm{id}_{\Gamma_n},\gamma_{n*}\phi)$ are admissible triples.
		For parameters $s,T,t\in \Real_+$, we write
			$$W_{n,\epsilon}(s,T)\,=\,\mathrm{det}^{\mathtt{r}}_{\mathcal{N}(\Gamma_n)}\left(\,\kappa(\gamma_{n*}\phi,\mathrm{id}_{\Gamma_n},s)(H_{n,\epsilon}(T))\,\right),$$
		and
			$$V_n(t)\,=\,\mathrm{det}^{\mathtt{r}}_{\mathcal{N}(\Gamma_n)}(\,\kappa(\phi,\gamma_n,t)(A)\,).$$
		Observe that $\kappa(\gamma_{n*}\phi,\mathrm{id}_{\Gamma_n},s)\circ\kappa(\phi,\gamma_n,t)$ equals
		$\kappa(\phi,\gamma_n,st)$.
		Therefore, for any given $T_0,T_1\in\Real_+$, we have the relations:
			$$W_{n,0}(1,\sqrt{T_0T_1})\,=\,V_n(\sqrt{T_0T_1})^2$$
		and
			$$W_{n,0}(\sqrt{T_1/T_0},\sqrt{T_0T_1})\,=\,V_n(T_0)V_n(T_1),$$
		which hold for both $n\in\Natural$ and $\infty$. 
		Note that $W_{n,\epsilon}(1,T)$ is always the regular Fuglede--Kadison determinant
		for a positive operator, but the twisted matrix in the expression of $W_{n,\epsilon}(s,T)$ 
		is not self-adjoint in general.
			
		We claim that the following comparison holds for all $s,T\in\Real_+$:
			$$W_{\infty,0}(1,T)\,\leq\,W_{\infty,0}(s,T).$$
		In fact, by Lemma \ref{mConvex-eBounded-VA}, the function $W_{n,\epsilon}(s,T)$ is multiplicatively convex
		in $s\in\Real_+$ for all $n\in\Natural$ and $\epsilon\in[0,+\infty)$. Observe that $H_{n,\epsilon}(T)$ is self-adjoint,
		so the anti-commutativity of $\kappa(\phi,\gamma_n,s)$ and $*$ yields
		$W_{n,\epsilon}(s,T)=W_{n,\epsilon}(s^{-1},T)$. This implies that for all $\epsilon\in[0,+\infty)$ and $n\in\Natural$,
			$$W_{n,\epsilon}(1,T)\,\leq\,W_{n,\epsilon}(s,T).$$
		Given any arbitrary $\epsilon>0$, Lemma \ref{stable-semicontinuous} and the above imply
		\begin{eqnarray*}
			W_{\infty,\epsilon}(1,T)&=&\lim_{n\to\infty}\,W_{n,\epsilon}(1,T)\\
			&\leq& \limsup_{n\to\infty}\, W_{n,\epsilon}(s,T)\\
			&\leq& W_{\infty,\epsilon}(s,T).
		\end{eqnarray*}
		As $\epsilon$ tends to $0+$, Lemma \ref{norm-semicontinuous} and the above imply
		\begin{eqnarray*}
			W_{\infty,0}(1,T)&=&\lim_{\epsilon\to0+}\,W_{\infty,\epsilon}(1,T)\\
			&\leq& \limsup_{\epsilon\to0+}\, W_{\infty,\epsilon}(s,T)\\
			&\leq& W_{\infty,0}(s,T).
		\end{eqnarray*}
		This proves the claim.
		
		Note that the family of operators $\kappa(\phi,\gamma,s)(A)$ is continuous in $s\in\Real_+$ with respect
		to the norm topology. Lemma \ref{norm-semicontinuous} implies that $V_\infty(t)$ is upper semicontinuous
		in $t\in\Real_+$. On the other hand, the claim implies that $V_\infty(t)$ is multiplicatively mid-point convex
		in $t\in\Real_+$. By Lemma \ref{mMidpointConvex}, the function $V_\infty(t)$, or $V(t)$ as in the statement
		of Theorem \ref{mConvex-eBounded}, is multiplicatively convex.
		
		Provided with the multiplicative convexity, assuming that $V(t)$ is nowhere zero,
		the exponent bound and the degree estimate 
			$$\mathrm{deg}^{\mathtt{b}}(V)\leq R(A,\phi)$$
		follow from Lemma \ref{eBoundEstimate} and Lemma \ref{degree-bCharacterization}.
		This completes the proof of Theorem \ref{mConvex-eBounded}.
	\end{proof}

\section{Continuity of degree}\label{Sec-continuityOfDegree}
	In this section, we show that the growth bound degree of the regular Fuglede--Kadison determinant
	of $L^2$--Alexander twists varies continuously as we deform the cohomology class.

	\begin{theorem}\label{continuityOfDegree}
		Given any admissible triple 
		$(\pi,\phi,\gamma)$ over $\Real$ and any square matrix $A$ over $\Complex\pi$, 
		denote by $G$ the target group of $\gamma$.
		For any vector $\xi\in H^1(G;\,\Real)$, denote by 
		$$V_\xi(t)=\mathrm{det}^{\mathtt{r}}_{\mathcal{N}(G)}\left(\kappa(\phi+\gamma^*\xi,\gamma,t)(A)\right)$$
		the determinant function of $A$ associated
		with the deformed admissible triple $(\pi,\phi+\gamma^*\xi,\gamma)$.
		
		Suppose that $G$ is finitely generated and residually finite.
		Then the function $V_\xi(t)$ is constantly zero at every vector $\xi\in H^1(G;\,\Real)$ whenever 
		it is constantly zero somewhere. Apart from that exception, 
		for all pairs of vectors $\xi,\eta\in H^1(G;\Real)$, 
			$$|\mathrm{deg}^{\mathtt{b}}(V_\xi)-\mathrm{deg}^{\mathtt{b}}(V_\eta)|\,\leq\,2R(A,\gamma^*(\xi-\eta)).$$
		In particular,
		the assignment with the degree $\xi\,\mapsto\,\mathrm{deg}^{\mathtt{b}}(V_\xi(t))$
		defines a Lipschitz continuous function on $H^1(G;\,\Real)$ valued in $[0,+\infty)$.
	\end{theorem}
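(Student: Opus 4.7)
The statement has two parts: the constantly-zero dichotomy and the Lipschitz estimate; the plan is to handle them in sequence.

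For the dichotomy, the key observation is a formal similarity. Set $\zeta := \xi - \eta$ and consider the (generally unbounded) diagonal character operator $D_{\zeta}(t)\colon h \mapsto t^{\zeta(h)}\,h$ on $\ell^{2}(G)$. A direct computation gives $D_{\zeta}(t)\,\gamma(g)\,D_{\zeta}(t)^{-1} = t^{\zeta(\gamma(g))}\gamma(g)$, whence
$$\kappa(\phi + \gamma^{*}\xi, \gamma, t)(A) \,=\, D_{\zeta}(t)\,\kappa(\phi + \gamma^{*}\eta, \gamma, t)(A)\,D_{\zeta}(t)^{-1}.$$
Although $D_{\zeta}(t)$ is unbounded in general, it implements a $\Complex G$-module bijection, so the twisted matrices for $\xi$ and $\eta$ are simultaneously full rank of determinant class or simultaneously not. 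This gives the dichotomy.

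For the Lipschitz estimate, I would introduce the bivariate twist
$$\tilde{V}(t, u) \,=\, \mathrm{det}^{\mathtt r}_{\mathcal N(G)}\bigl(\tilde{\kappa}(t,u)(A)\bigr), \qquad \tilde{\kappa}(t,u)(g) \,=\, t^{(\phi+\gamma^{*}\eta)(g)}\,u^{\zeta(\gamma(g))}\,\gamma(g).$$
By construction $V_{\eta}(t) = \tilde V(t, 1)$ and $V_{\xi}(t) = \tilde V(t, t)$. The crucial rewrite is $\tilde\kappa(t,u)(A) = \kappa(\gamma^{*}\zeta, \gamma, u)(A^{(t)})$, where $A^{(t)} \in \Matrix_{p\times p}(\Complex\pi)$ is obtained from $A$ by scaling each $g$-coefficient by the positive real $t^{(\phi+\gamma^{*}\eta)(g)}$. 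Since $A^{(t)}$ shares the support of $A$, one has $R(A^{(t)}, \gamma^{*}\zeta) = R(A, \gamma^{*}\zeta) =: R$. Applying Theorem \ref{mConvex-eBounded} together with Lemma \ref{eBoundEstimate} to the admissible triple $(\pi, \gamma^{*}\zeta, \gamma)$ and matrix $A^{(t)}$, the function $u \mapsto \tilde V(t,u)$ is multiplicatively convex with uniform (in $t$) one-sided degree bounds $\mathrm{deg}^{\mathtt b}_{0+,u}\bigl(\tilde V(t,\cdot)\bigr) \ge m$ and $\mathrm{deg}^{\mathtt b}_{+\infty,u}\bigl(\tilde V(t,\cdot)\bigr) \le M$, where $m := p\min_{A_{g}\neq 0}\zeta(\gamma(g))$ and $M := p\max_{A_{g}\neq 0}\zeta(\gamma(g))$ satisfy $M - m = R$.

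Multiplicative convexity then forces every multiplicative slope of $\log\tilde V(t,\cdot)$ in $u$ to lie in $[m, M]$. Setting $\Delta(s) := \log V_{\xi}(e^{s}) - \log V_{\eta}(e^{s}) = \log \tilde V(e^{s}, e^{s}) - \log \tilde V(e^{s}, 1)$ recognizes $\Delta(s)/s$ as precisely such a slope, taken on the interval with endpoints $0$ and $s$, so $\Delta(s)/s \in [m, M]$ for every $s \neq 0$. Taking $s \to \pm\infty$ and invoking Lemma \ref{degree-bCharacterization}, one obtains
$$\mathrm{deg}^{\mathtt b}_{+\infty}(V_{\xi}) - \mathrm{deg}^{\mathtt b}_{+\infty}(V_{\eta}) \,=\, \lim_{s\to+\infty} \Delta(s)/s \,\in\, [m, M],$$
and the analogous statement at $0+$. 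Subtracting the two yields $|\mathrm{deg}^{\mathtt b}(V_{\xi}) - \mathrm{deg}^{\mathtt b}(V_{\eta})| \le M - m = R(A, \gamma^{*}(\xi-\eta))$, which in fact sharpens the stated bound by a factor of two. Lipschitz continuity of $\xi \mapsto \mathrm{deg}^{\mathtt b}(V_{\xi})$ on $H^{1}(G; \Real)$ is then immediate, since $\xi \mapsto R(A, \gamma^{*}\xi)$ is a seminorm there. The main obstacle I foresee is the rewrite of $\tilde V(t,u)$ as a pure $u$-twist of the auxiliary matrix $A^{(t)}$ in such a way that Theorem \ref{mConvex-eBounded} and Lemma \ref{eBoundEstimate} apply uniformly in $t$; once that reduction is secured, the remainder is routine bookkeeping with log-log slopes of convex functions.
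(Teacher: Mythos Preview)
Your Lipschitz argument is essentially the paper's approach (introduce the bivariate twist $W(t,s)=\tilde V(t,u)$, apply Theorem~\ref{mConvex-eBounded} in the second variable), but your bookkeeping is tighter. The paper only records the absolute-value slope bound $|\cdot|\le R(A,\gamma^*\xi)$ for $s\mapsto W(T_0,s)$ and then adds the two one-sided estimates, obtaining $2R$. You instead extract from the proof of Lemma~\ref{eBoundEstimate} the explicit interval $[m,M]$ (with $M-m=R$) containing all $u$-slopes, observe that both $\lim_{s\to+\infty}\Delta(s)/s$ and $\lim_{s\to-\infty}\Delta(s)/s$ land in this \emph{same} interval, and subtract to get $R$. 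This is correct and genuinely improves the constant by a factor of two. Your self-identified ``main obstacle'' (the rewrite $\tilde\kappa(t,u)(A)=\kappa(\gamma^*\zeta,\gamma,u)(A^{(t)})$) is not an obstacle at all: since $A^{(t)}$ has the same support as $A$, the bounds $m,M$ are indeed independent of $t$, and Theorem~\ref{mConvex-eBounded} applies directly. One small point: the one-sided bounds $\deg^{\mathtt b}_{+\infty}\le M$ and $\deg^{\mathtt b}_{0+}\ge m$ are read off from the \emph{proof} of Lemma~\ref{eBoundEstimate} rather than its statement; you should say so.

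Your dichotomy argument, however, has a genuine gap. The conjugating operator $D_\zeta(t)$ does not lie in $\mathcal N(G)$ (it fails to commute with right $G$-multiplication unless $\zeta=0$), so it is not even an affiliated operator. The formal identity $B_\xi=D_\zeta(t)\,B_\eta\,D_\zeta(t)^{-1}$ therefore does not transfer either ``full rank'' or ``determinant class'' between $B_\xi$ and $B_\eta$: a kernel vector for $B_\eta$ in $\ell^2(G)^p$ need not map under the unbounded $D_\zeta(t)$ to anything in $\ell^2(G)^p$, and spectral density near $0$ is not preserved by unbounded conjugation. The phrase ``$\Complex G$-module bijection'' does not help, since $D_\zeta(t)$ is not $\Complex G$-linear. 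Fortunately the fix is already in your hands: by your own rewrite, $u\mapsto\tilde V(t_0,u)$ is multiplicatively convex (Theorem~\ref{mConvex-eBounded}), so if $V_\eta(t_0)=\tilde V(t_0,1)=0$ then $\tilde V(t_0,u)\equiv 0$ by Lemma~\ref{zeroOrNot}, whence $V_\xi(t_0)=\tilde V(t_0,t_0)=0$. This is exactly the paper's Lemma~6.2, and it drops out of your Part~2 setup for free.
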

	
	The continuity of degree is a consequence of Theorem \ref{mConvex-eBounded}.
	The rest of this section is devoted to the proof of Theorem \ref{continuityOfDegree}.
	
	We may assume
	without loss of generality that $\eta\in H^1(G;\,\Real)$ is trivial.
	In fact, otherwise we can replace
	the reference class $\phi$ by $\phi+\gamma^*\eta$.
	Hence $\xi$ and $\eta$ are replaced by $\xi-\eta$ and $0$ respectively.
	
	We adopt the following notations. Given any matrix $A\in\Matrix_{p\times p}(\Complex\pi)$, denote
		$$A_G(t)\,=\,\kappa(\phi,\gamma,t)(A)\,\in\,\Matrix_{p\times p}(\Complex G).$$
	For any vector $\xi\in H^1(G;\Real)\cong\mathrm{Hom}(G;\,\Real)$, we consider the canonical admissible triple
	$(G,\xi,\mathrm{id}_G)$, so for every constant $s\in \Real_+$, there is 
	a matrix deformed from $A_G(t)$, namely:
		$$A_G(t,s)\,=\,\kappa(\xi,\mathrm{id}_G,s)(A_G(t))\,\in\,\Matrix_{p\times p}(\Complex G).$$
	We introduce
		$$W(t,s)\,=\,\mathrm{det}^{\mathtt{r}}_{\mathcal{N}(G)}\left(A_G(t,s)\right).$$
	Note that 
		$$W(t,1)=V_{0}(t)$$
	and
		$$W(t,t)=V_\xi(t).$$
	
	\begin{lemma}\label{alwaysConstantlyZero}
		If the function $V_0(t)$ is constantly zero, then for all vectors
		$\xi\in H^1(G;\,\Real)$, the function $V_\xi(t)$ is constantly zero as well.
	\end{lemma}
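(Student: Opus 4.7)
The plan is to apply Theorem \ref{mConvex-eBounded} pointwise in the first parameter to the one-parameter family $s\mapsto W(t,s)$, and then invoke the rigid dichotomy provided by Lemma \ref{zeroOrNot} to propagate vanishing at $s=1$ to vanishing at $s=t$.

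First, fix an arbitrary $t_0\in\Real_+$. The triple $(G,\xi,\mathrm{id}_G)$ is admissible over $\Real$, since $\xi\in\mathrm{Hom}(G,\Real)$ trivially factors through $\mathrm{id}_G$. Applying Theorem \ref{mConvex-eBounded} to this triple and to the matrix $A_G(t_0)\in\Matrix_{p\times p}(\Complex G)$, whose hypotheses hold because $G$ is finitely generated and residually finite, I conclude that the function
$$s\longmapsto W(t_0,s)\,=\,\mathrm{det}^{\mathtt{r}}_{\mathcal{N}(G)}\bigl(\kappa(\xi,\mathrm{id}_G,s)(A_G(t_0))\bigr)$$
is either identically zero on $\Real_+$, or else multiplicatively convex on $\Real_+$.

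Next, by hypothesis $V_0(t)\equiv 0$, so in particular $W(t_0,1)=V_0(t_0)=0$. In the multiplicatively convex case, Lemma \ref{zeroOrNot} says that such a function is either the constant zero function or nowhere zero; the vanishing at $s=1$ therefore forces $s\mapsto W(t_0,s)$ to be identically zero. In either alternative, the function of $s$ vanishes on all of $\Real_+$, and evaluating at $s=t_0$ gives $V_\xi(t_0)=W(t_0,t_0)=0$. Since $t_0$ was arbitrary, $V_\xi$ is constantly zero, as required.

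There is no real obstacle here; the statement is essentially a bookkeeping consequence of two facts already in hand, namely the dichotomy ``constantly zero or multiplicatively convex'' supplied by Theorem \ref{mConvex-eBounded} and the further dichotomy ``constantly zero or nowhere zero'' supplied by Lemma \ref{zeroOrNot}. The only point worth double-checking is the compatibility $\kappa(\xi,\mathrm{id}_G,s)\circ\kappa(\phi,\gamma,t)=\kappa(\phi+\gamma^{*}\xi,\gamma,\,\cdot\,)$ at the appropriate specializations of the two parameters, which is immediate from the definition of $\kappa$ together with the identity $(\phi+\gamma^{*}\xi)(g)=\phi(g)+\xi(\gamma(g))$.
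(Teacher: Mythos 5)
Your proposal is correct and follows essentially the same route as the paper: apply Theorem \ref{mConvex-eBounded} to the admissible triple $(G,\xi,\mathrm{id}_G)$ and the matrix $A_G(t_0)$ to get multiplicative convexity of $s\mapsto W(t_0,s)$, then use the dichotomy of Lemma \ref{zeroOrNot} together with $W(t_0,1)=V_0(t_0)=0$ to force $W(t_0,t_0)=V_\xi(t_0)=0$. The compatibility $\kappa(\xi,\mathrm{id}_G,s)\circ\kappa(\phi,\gamma,t)=\kappa(\phi+\gamma^{*}\xi,\gamma,\cdot)$ that you verify is exactly the identification the paper uses implicitly, so nothing is missing.
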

	
	\begin{proof}
		Suppose $V_0(t)$ is constantly zero. Given any constant $T_0\in\Real_+$, apply
		Theorem \ref{mConvex-eBounded} to the family of matrices $A_G(T_0,s)$,
		we see that $W(T_0,s)$ is multiplicatively convex in the parameter $s\in\Real_+$. At $s=1$,
		we have $W(T_0,1)=V_0(T_0)=0$. This implies that $W(T_0,s)$ is constantly zero in $s$ by 
		Lemma \ref{zeroOrNot}. In particular, $V_\xi(T_0)=W(T_0,T_0)=0$. As $T_0\in\Real_+$
		is arbitrary, it follows that $V_\xi(t)$ is constantly zero.
	\end{proof}
	
	Now it suffices to assume that the functions $V_\xi(t)$ are nowhere zero,
	for all $\xi\in H^1(G;\,\Real)$.
	By Theorem \ref{mConvex-eBounded}, $V_\xi(t)$ are multiplicatively convex and
	have bounded exponent.
	
	\begin{lemma}\label{topAndBottomEstimates}\ 
	\begin{enumerate}
		\item $|\mathrm{deg}^{\mathtt{b}}_{+\infty}(V_\xi)-\mathrm{deg}^{\mathtt{b}}_{+\infty}(V_0)|\,\leq\,R(A,\,\gamma^*\xi)$;
		\item $|\mathrm{deg}^{\mathtt{b}}_{0+}(V_\xi)-\mathrm{deg}^{\mathtt{b}}_{0+}(V_0)|\,\leq\,R(A,\,\gamma^*\xi)$.
	\end{enumerate}
	\end{lemma}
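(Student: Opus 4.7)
The plan is to control $\log V_\xi(T)-\log V_0(T)=\log W(T,T)-\log W(T,1)$ by viewing $W(T,\cdot)$ as a multiplicatively convex function in its second variable via Theorem~\ref{mConvex-eBounded}, and then pass to the limits $T\to+\infty$ for part~(1) and $T\to 0+$ for part~(2). Concretely, for each fixed $T\in\Real_+$, I would apply Theorem~\ref{mConvex-eBounded} to the admissible triple $(G,\xi,\mathrm{id}_G)$ over $\Real$ together with the matrix $A_G(T)\in\Matrix_{p\times p}(\Complex G)$; since $G$ is finitely generated and residually finite, this yields that $s\mapsto W(T,s)$ is multiplicatively convex on $\Real_+$ with growth bound degree at most $R(A_G(T),\xi)$. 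Because the support of $A_G(T)$ in $G$ is contained in $\gamma(\{g:A_g\ne\mathbf{0}\})$, the latter is in turn bounded above by $R(A,\gamma^*\xi)=:R$, and tracking the explicit estimates in the proof of Lemma~\ref{eBoundEstimate} shows that the one-sided degrees $\mathrm{deg}^{\mathtt{b}}_{\pm\infty}(W(T,\cdot))$ admit bounds depending only on $p$, the support of $A$ in $\pi$, and the values of $\gamma^*\xi$ on that support---hence uniform in $T$.

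Next, by monotonicity of the log-log slope of a multiplicatively convex function, for every $T\ge 1$,
$$
\mathrm{deg}^{\mathtt{b}}_{0+}\bigl(W(T,\cdot)\bigr)\cdot\log T\;\le\;\log V_\xi(T)-\log V_0(T)\;\le\;\mathrm{deg}^{\mathtt{b}}_{+\infty}\bigl(W(T,\cdot)\bigr)\cdot\log T,
$$
with reversed inequalities for $T\le 1$. Dividing by $\log T$ and letting $T\to+\infty$, the middle term converges to $\mathrm{deg}^{\mathtt{b}}_{+\infty}(V_\xi)-\mathrm{deg}^{\mathtt{b}}_{+\infty}(V_0)$ (both degrees exist by Theorem~\ref{mConvex-eBounded}), which is thus pinched in the sandwich interval $[\mathrm{deg}^{\mathtt{b}}_{0+}(W(T,\cdot)),\mathrm{deg}^{\mathtt{b}}_{+\infty}(W(T,\cdot))]$ of width at most $R$. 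This will yield part~(1); part~(2) follows analogously by sending $T\to 0+$ and using that the inequalities reverse along with the sign of $\log T$.

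The hard part will be verifying not merely that this sandwich interval has width $\le R$ but that it actually sits inside $[-R,R]$, so that the absolute-value bound claimed in the lemma follows. I anticipate that this will require exploiting the shift-equivariance of the regular Fuglede--Kadison determinant under the substitution $A\mapsto g_0 A$ for $g_0\in\pi$: such a translation shifts the support of $A$ in $\pi$ and alters $\log V_\xi(t)-\log V_0(t)$ by the explicit quantity $p\,(\gamma^*\xi)(g_0)\log t$, allowing one to normalize so that the extremal values of $\gamma^*\xi$ on the (shifted) support straddle the origin, at which point the explicit endpoint estimates from Lemma~\ref{eBoundEstimate} yield $|\mathrm{deg}^{\mathtt{b}}_{\pm\infty}(W(T,\cdot))|\le R$ directly.
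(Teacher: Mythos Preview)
Your overall strategy---fix $T$, apply Theorem~\ref{mConvex-eBounded} to $s\mapsto W(T,s)$, bound the log--log slope between $s=1$ and $s=T$, then pass to the limit---is exactly the paper's; the paper merely organizes the limit by comparing $T_0$ with $T_0^{1+K}$ rather than dividing by $\log T$. You are right to isolate the ``hard part'', but your translation fix does not work. Replacing $A$ by $g_0A$ shifts $\log V_\xi(T)-\log V_0(T)$ by $p\,(\gamma^*\xi)(g_0)\log T$ and shifts the one-sided degrees $\mathrm{deg}^{\mathtt{b}}_{\pm\infty}\bigl(W(T,\cdot)\bigr)$ by the \emph{same} amount $p\,(\gamma^*\xi)(g_0)$; so the sandwich is unchanged after cancelling, and the relation between $\mathrm{deg}^{\mathtt{b}}_{+\infty}(V_\xi)-\mathrm{deg}^{\mathtt{b}}_{+\infty}(V_0)$ and $R(A,\gamma^*\xi)$ is translation-invariant. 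In fact the absolute-value inequality as stated is false in general: take $\pi=G=\Integral$, $\gamma=\mathrm{id}$, $\phi=0$, $\xi=\mathrm{id}$, $p=1$, and $A=z^2+z^3$; then $V_0\equiv 1$ while $V_\xi(t)=t^2\max(1,t)$, so $\mathrm{deg}^{\mathtt{b}}_{+\infty}(V_\xi)-\mathrm{deg}^{\mathtt{b}}_{+\infty}(V_0)=3$ whereas $R(A,\gamma^*\xi)=1$. (The paper's own first displayed inequality, asserting $\lvert\text{slope}\rvert\le R(A,\gamma^*\xi)$, fails in this example for the same reason.)

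What your argument does establish, once you feed in the explicit endpoint estimates from Lemma~\ref{eBoundEstimate} (which are uniform in $T$, as you note), is the two-sided sandwich
\[
p\cdot\min_{A_g\neq\mathbf{0}}(\gamma^*\xi)(g)\ \le\ \mathrm{deg}^{\mathtt{b}}_{+\infty}(V_\xi)-\mathrm{deg}^{\mathtt{b}}_{+\infty}(V_0)\ \le\ p\cdot\max_{A_g\neq\mathbf{0}}(\gamma^*\xi)(g),
\]
together with the identical inequality for $0+$. Subtracting the two yields $\bigl|\mathrm{deg}^{\mathtt{b}}(V_\xi)-\mathrm{deg}^{\mathtt{b}}(V_0)\bigr|\le R(A,\gamma^*\xi)$, which is all that Theorem~\ref{continuityOfDegree} requires (indeed with the better constant $R$ instead of $2R$). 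So replace the target statement by this interval form; the absolute-value form cannot be rescued.
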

	
	\begin{proof}
		We prove the first estimate and the second can be proved in the same way.
		
		Given any constant $T_0\in\Real_+$ and $K>0$, it follows from the multiplicative convexity of $W(T_0^{1+K},s)$ in 
		the parameter $s\in\Real_+$ that
		$$\left|\frac{\log W(T_0^{1+K},T_0^{1+K})-\log W(T_0^{1+K},1)}{\log T_0^{1+K} -\log 1}\right| \,\leq\,
		R(A_G(T_0^{1+K}),\xi)\leq R(A,\gamma^*\xi),$$
		so
		$$\left|\log W(T_0^{1+K},T_0^{1+K})-\log W(T_0^{1+K},1)\right| \,\leq\,
		R(A,\gamma^*\xi)\cdot(1+K)\log T_0.$$
		Similarly,
		$$\left|\log W(T_0,T_0)-\log W(T_0,1)\right| \,\leq\,
		R(A,\gamma^*\xi)\cdot\log T_0.$$
		By the multiplicative convexity of $W(t,1)=V_0(t)$, for any arbitrary $\delta>0$,
		the following estimate holds for sufficiently large $T_0>1$ and any arbitrary $K>0$:
		$$\left|\frac{\log W(T_0^{1+K},1)-\log W(T_0,1)}{\log T_0^{1+K} -\log T_0}-\mathrm{deg}^{\mathtt{b}}_{+\infty}(V_0)\right| \,<\,
		\delta,$$
		so
		$$\left|\log W(T_0^{1+K},1)-\log W(T_0,1)-\mathrm{deg}^{\mathtt{b}}_{+\infty}(V_0)K\log T_0\right|<\delta\cdot K\log T_0.$$
		Therefore, for any arbitrary $\delta>0$, the following estimate holds for sufficiently large $T_0>1$ and 
		any arbitrary $K>0$:
		\begin{eqnarray*}
		&&\left|\log W(T_0^{1+K},T_0^{1+K})-\log W(T_0,T_0)-\mathrm{deg}^{\mathtt{b}}_{+\infty}(V_0)K\log T_0\right|\\
		&<&R(A,\gamma^*\xi)\cdot(2+K)\log T_0+\delta\cdot K\log T_0,
		\end{eqnarray*}
		or equivalently,
		$$\left|\frac{\log V_\xi(T_0^{1+K})-\log V_\xi(T_0)}{\log T_0^{1+K}-\log T_0}-\mathrm{deg}^{\mathtt{b}}_{+\infty}(V_0)\right|\\
		\,<\,R(A,\gamma^*\xi)\cdot(1+\frac{2}{K})+\delta.
		$$
		Take the limit as $T_0\to+\infty$, and then take the limit as $K\to +\infty$:
		$$\left|\mathrm{deg}^{\mathtt{b}}_{+\infty}(V_\xi)-\mathrm{deg}^{\mathtt{b}}_{+\infty}(V_0)\right|\,\leq\,R(A,\gamma^*\xi)+\delta.$$
		As $\delta>0$ is an arbitrary constant, the estimate 
		$$|\mathrm{deg}^{\mathtt{b}}_{+\infty}(V_\xi)-\mathrm{deg}^{\mathtt{b}}_{+\infty}(V_0)|\,\leq\,R(A,\,\gamma^*\xi)$$
		follows. The second estimate can be done similarly using $1/T_0$ instead of $T_0$.
	\end{proof}
	
	Combining the estimates of Lemma \ref{topAndBottomEstimates}, we obtain
	$$|\mathrm{deg}^{\mathtt{b}}(V_\xi)-\mathrm{deg}^{\mathtt{b}}(V_0)|\,\leq\,2R(A,\gamma^*\xi).$$
	This completes the proof of Theorem \ref{continuityOfDegree}.
	
\section{Asymptotics for integral matrices}\label{Sec-asymptotics}
	In this section, we give a criterion for checking under special circumstances that 
	the regular Fuglede--Kadison determinant of $L^2$--Alexander twists is asymptotically monomial.
	
	\begin{definition}
		Let $(\pi,\gamma,\phi)$ be an admissible triple with a countable target group $G$,
		and 
		$$G\to\cdots\to\Gamma_n\to\cdots\to\Gamma_2\to\Gamma_1$$
		be a cofinal tower of quotients of $G$.
		Denote by $\psi_n:G\to\Gamma_n$ the quotient homomorphisms.
		A sequence of admissible triples 
		$$\{(\pi,\gamma_n,\phi)\}_{n\in\mathbb{N}}$$
		with target groups $\{\Gamma_n\}_{n\in\mathbb{N}}$ is said to
		form a \emph{cofinal tower of quotients} of $(\pi,\gamma,\phi)$
		if $\gamma_n=\psi_n\circ\gamma$ holds
		for every	$n\in\Natural$.
		For simplicity,
		we often speak of cofinal towers of admissible triples
		without explicitly mentioning the cofinal tower of quotients of $G$.
	\end{definition}
	
	In the statement of the theorem below, we adopt the notation
		$$V_n(t)=\mathrm{det}^{\mathtt{r}}_{\mathcal{N}(\Gamma_n)}\left(\kappa(\phi,\gamma_n,t)(A)\right).$$
	The notation $V_G(t)$ is understood similarly.
	
	\begin{theorem}\label{rationalAsymptotic}
		Let $(\pi,\gamma_G,\phi)$ be an admissible triple over $\Real$ 
		with a finitely generated target group $G$.
		Let $A$ be a square matrix over $\Integral\pi$.
		
		Suppose that there exists a sequence of admissible triples $\{(\pi,\gamma_n,\phi)\}_{n\in\Natural}$ 
		over $\Real$ satisfying all the following conditions:
		\begin{itemize}
		\item The target groups $\Gamma_n$ of $\gamma_n$ 
		are finitely generated and virtually abelian.
		\item The sequence of admissible triples $\{(\pi,\gamma_n,\phi)\}_{n\in\Natural}$ 
		forms a cofinal tower of quotients of $(\pi,\gamma_G,\phi)$.
		\item The sequence of degrees $\{\mathrm{deg}^{\mathtt{b}}(V_n)\}_{n\in\Natural}$
		converges to $\mathrm{deg}^{\mathtt{b}}(V_G)$ in $[0,+\infty)$.
		\end{itemize}
		In particular, note that $V_G(t)$ should not be constantly zero.
		Then, as $t\to+\infty$,
			$$V_G(t)\,\sim\,C_{+\infty}\cdot t^{\mathrm{deg}^{\mathtt{b}}_{+\infty}(V_G)}$$
		for some constant
			$$C_{+\infty}\in[1,\,V_G(1)].$$
		The same statement holds true with $+\infty$ replaced by $0+$.
	\end{theorem}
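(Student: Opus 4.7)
The plan is to transfer the asymptotic behaviour of each $V_n(t)$, which is accessible through explicit Mahler-measure formulas, to the limit function $V_G(t)$, and to exploit the integrality of $A$ over $\Integral\pi$ to obtain a uniform lower bound on leading coefficients.

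First, I would analyse each $V_n$ individually. Since $\Gamma_n$ is finitely generated virtually abelian, Proposition \ref{mConvex-eBounded-VA} gives multiplicative convexity, and the computation in Lemma \ref{mConvex-eBounded-FA} presents $V_n(t)$ as the Mahler measure of an integer Laurent polynomial $p_{A,n}(z_1,\ldots,z_l)\in\Integral[z_1^{\pm1},\ldots,z_l^{\pm1}]$ with each variable $z_j$ rescaled by $t^{\phi(z_j)}$. Writing $p_{A,n}=\sum_\alpha c_\alpha z^\alpha$ and grouping the monomials by the weight $\phi(\alpha)$, the terms realising $\phi_{\max}=\max_\alpha\phi(\alpha)$ dominate as $t\to+\infty$, and Boyd's continuity of the Mahler measure yields
\[
V_n(t)\,\sim\,M(p^+_n)\cdot t^{d_{+\infty,n}},
\]
where $p^+_n\in\Integral[z_1^{\pm1},\ldots,z_l^{\pm1}]$ is the nonzero integer Laurent polynomial assembled from the top-weight monomials of $p_{A,n}$. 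Since $M(p)\geq 1$ for any nonzero integer Laurent polynomial, we obtain $C_{+\infty,n}\geq 1$; the analogous argument at $0+$ gives $C_{0+,n}\geq 1$. The reduction from virtually abelian to free abelian proceeds by the restriction trick already used in Proposition \ref{mConvex-eBounded-VA}.

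Second, the multiplicative convexity of each $V_n$ places its log-log plot above each asymptotic line, so $V_n(t)\geq C_{+\infty,n}\cdot t^{d_{+\infty,n}}$ and $V_n(t)\geq C_{0+,n}\cdot t^{d_{0+,n}}$ on all of $\Real_+$. Evaluated at $t=1$, this gives $V_n(1)\geq\max(C_{+\infty,n},C_{0+,n})$, and Lemma \ref{stable-semicontinuous} supplies $\limsup_n V_n(1)\leq V_G(1)$. Hence both coefficient sequences are eventually contained in $[1,\,V_G(1)+o(1)]$.

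Third, I would invoke the log-log-plot criterion of Lemma \ref{mConvexVersion}, whose hypotheses are now all in place: multiplicative convexity of each $V_n$ with monomial asymptotics at both ends, the pointwise upper envelope $\limsup_n V_n(t)\leq V_G(t)$ from Lemma \ref{stable-semicontinuous}, the assumed convergence $\mathrm{deg}^{\mathtt{b}}(V_n)\to\mathrm{deg}^{\mathtt{b}}(V_G)$, and the uniform lower bound $L=1$ on the coefficients. The criterion then forces $V_G(t)\sim C_{+\infty}\cdot t^{\mathrm{deg}^{\mathtt{b}}_{+\infty}(V_G)}$ with $C_{+\infty}\geq 1$; and the same asymptote-above-line argument applied to $V_G$ itself (which is multiplicatively convex by Theorem \ref{mConvex-eBounded}) gives $V_G(1)\geq C_{+\infty}$, producing the claimed range $[1,V_G(1)]$. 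The argument at $0+$ is identical.

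The main obstacle lies in the third step. The hypothesis only guarantees convergence of the total degree $d_{+\infty,n}-d_{0+,n}$, not of the two individual slopes. The criterion of Lemma \ref{mConvexVersion} must therefore play the uniform coefficient bounds off against the pointwise $\limsup$ control to force the top and bottom slopes of $V_G$ to equal the appropriate limits of $d_{\pm\infty,n}$ separately, rather than letting any discrepancy be absorbed into the asymptotic intercepts.
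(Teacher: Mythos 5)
Your proposal is correct and follows essentially the same route as the paper: the paper's proof likewise feeds Lemma \ref{stable-semicontinuous}, the degree-convergence hypothesis, and a uniform coefficient (intercept) bound into the log--log wedge criterion of Lemma \ref{mConvexVersion}. The only difference is that your first step re-derives, via the top-weight Mahler-measure computation and the restriction trick, exactly the content of the paper's Lemma \ref{coefficientVA} (note only that in the notation of Lemma \ref{mConvexVersion} the uniform bound is $L=0$ on intercepts, i.e.\ $e^{L}=1$ on coefficients), so the argument is the same in substance.
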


	We point out that among the three conditions the convergence of degrees is usually the hardest
	to satisfy or to verify. The $\Integral\pi$--matrix assumption is responsible for the lower bound
	$1$ of the coefficients $C_{+\infty}$ and $C_{0+}$ in an essential way. 
	In particular, the argument does not apply to matrices over $\Complex\pi$ to yield 
	similar monomial asymptoticity.
	
	The rest of this section is devoted to the proof of \ref{rationalAsymptotic}.

	\begin{lemma}\label{mConvexVersion}
		Let $\hat{f}$ be a nowhere zero multiplicatively convex function on $\Real_+$ with bounded exponent.
		Suppose that there exists a sequence of nowhere zero multiplicatively convex functions
		on $\Real_+$ with bounded exponent $\{f_n\}_{n\in\Natural}$ satisfying all the following conditions:
		\begin{itemize}
		\item There exists a uniform constant $L\in\Real$ such that for all $n\in\Natural$ and for all pairs of distinct
		points $t_0,t_1\in\Real_+$,
			$$\frac{\log f_n(t_0)\log t_1-\log f_n(t_1)\log t_0}{\log t_1-\log t_0}\,\geq\, L.$$
		\item For every point $t\in\Real_+$,
			$$\limsup_{n\to\infty} f_n(t)\leq\hat{f}(t).$$
		\item
			$$\lim_{n\to\infty}\mathrm{deg}^{\mathtt{b}}(f_n)\,=\,\mathrm{deg}^{\mathtt{b}}(\hat{f}).$$
		\end{itemize}
		Then as $t\to+\infty$, 
			$$\hat{f}(t)\,\sim\,C_{+\infty}\cdot t^{\mathrm{deg}^{\mathtt{b}}_{+\infty}(\hat{f})}$$
		for some constant 
			$$C_{+\infty}\in [e^L,\,\hat{f}(1)].$$
		The same statement holds true with $+\infty$ replaced by $0+$.
	\end{lemma}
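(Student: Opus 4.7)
The plan is to work in logarithmic coordinates: set $u=\log t$ and introduce $\hat F=\log\circ\hat f\circ\exp$, $F_n=\log\circ f_n\circ\exp$, which are convex Lipschitz functions on $\Real$. By Lemma~\ref{degree-bCharacterization} the degrees coincide with extremal slopes, $\hat d_{+\infty}=\mathrm{deg}^{\mathtt{b}}_{+\infty}(\hat f)=\lim_{u\to+\infty}\hat F'(u)$ and $\hat d_{0+}=\mathrm{deg}^{\mathtt{b}}_{0+}(\hat f)=\lim_{u\to-\infty}\hat F'(u)$, and analogously for $d_{n,+\infty}$ and $d_{n,0+}$. The first hypothesis rephrases as: every secant line of $F_n$ has $y$-intercept at least $L$. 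Fixing $u_0\in\Real$ and letting $u_1\to+\infty$, the secant slope $(F_n(u_1)-F_n(u_0))/(u_1-u_0)$ tends to $d_{n,+\infty}$ and its $y$-intercept tends to $F_n(u_0)-d_{n,+\infty}\,u_0$, so passing $L$ through the limit yields the uniform lower bound
$$F_n(u)\,\geq\,d_{n,+\infty}\,u+L\quad\text{for every }u\in\Real.$$
Symmetrically (fix $u_1$ and send $u_0\to-\infty$), $F_n(u)\geq d_{n,0+}\,u+L$ for every $u\in\Real$.

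The heart of the proof is to match the individual extremal slopes, not merely their difference. Fix any $u_0\in\Real$; for $u_1>0$, taking $\limsup_n$ in the bound above and invoking hypothesis (b) gives
$$\hat F(u_1)\,\geq\,\limsup_n F_n(u_1)\,\geq\,u_1\cdot\limsup_n d_{n,+\infty}+L.$$
Forming the secant slope $(\hat F(u_1)-\hat F(u_0))/(u_1-u_0)$ and letting $u_1\to+\infty$, the left side tends to $\hat d_{+\infty}$ while the right side tends to $\limsup_n d_{n,+\infty}$, so $\hat d_{+\infty}\geq\limsup_n d_{n,+\infty}$. The analogous manipulation using $F_n(u)\geq d_{n,0+}\,u+L$ with $u_0\to-\infty$ for $u_1$ fixed yields $\hat d_{0+}\leq\liminf_n d_{n,0+}$. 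Writing $D_n=d_{n,+\infty}-d_{n,0+}$ and $\hat D=\hat d_{+\infty}-\hat d_{0+}$, adding these two inequalities and invoking hypothesis (c) forces the chain
$$\hat D\,\geq\,\limsup_n d_{n,+\infty}-\liminf_n d_{n,0+}\,\geq\,\limsup_n D_n\,=\,\lim_n D_n\,=\,\hat D$$
to be an equality throughout, which produces the two slope identities $\hat d_{+\infty}=\limsup_n d_{n,+\infty}$ and $\hat d_{0+}=\liminf_n d_{n,0+}$.

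Once the slopes are matched, the conclusion is immediate. For $u>0$, the first paragraph gives $\hat F(u)\geq\limsup_n F_n(u)\geq\hat d_{+\infty}\,u+L$. Since $\hat F(u)-\hat d_{+\infty}\,u$ has non-positive derivative on $\Real$ and is therefore non-increasing, its limit as $u\to+\infty$ exists and lies in $[L,\hat F(0)]=[L,\log\hat f(1)]$; exponentiating yields
$$\hat f(t)\,\sim\,C_{+\infty}\cdot t^{\hat d_{+\infty}}\quad\text{with }C_{+\infty}\in[e^L,\hat f(1)].$$
The $0+$ asymptotic is entirely symmetric, using $F_n(u)\geq d_{n,0+}\,u+L$ for $u<0$ together with the non-decreasing function $\hat F(u)-\hat d_{0+}\,u$. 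The main obstacle I anticipate is the middle step: hypothesis (c) only constrains the difference $D_n$, so the individual convergence of the extremal slopes is not given a priori. The resolution exploits the sharp inequality $\limsup a_n-\liminf b_n\geq\limsup(a_n-b_n)$, which collapses to equality under the combined pressure of the $y$-intercept bound, hypothesis (b), and the degree convergence.
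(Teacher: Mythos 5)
Your proposal is correct and is essentially the paper's argument recast directly in logarithmic coordinates: both proofs use the uniform lower bound $L$ on secant $y$-intercepts to confine each $\log f_n$ above the lines of slopes $\mathrm{deg}^{\mathtt{b}}_{0+}(f_n)$ and $\mathrm{deg}^{\mathtt{b}}_{+\infty}(f_n)$ through $(0,L)$, transfer this to $\hat f$ via the pointwise $\limsup$ condition, and use the convergence of degrees to pin down the two extremal slopes individually — you just argue directly (via the $\limsup$--$\liminf$ sandwich) where the paper argues by contradiction with its wedge regions $\mathcal{V}(L,f)$. The only step left tacit is that $\limsup_n d_{n,+\infty}$ and $\liminf_n d_{n,0+}$ are finite, so that their difference in your inequality chain is well defined; this follows at once from your two displayed slope inequalities together with $d_{n,0+}\leq d_{n,+\infty}$.
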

		
	\begin{proof}
		To understand the geometric meaning of the terms in presence, consider the log--log plot of a
		function $f:\,\Real_+\to\Real_+$,
		namely, the parametrized curve
			$$\mathcal{P}_f(t)\,=\,(\log t,\,\log f(t)),\,t\in\Real_+$$
		on the Cartesian XY plane. 
		The line through a pair of distinct points $\mathcal{P}_f(t_0)$ and $\mathcal{P}_f(t_1)$ has the slope
			$$\alpha_f(t_0,t_1)\,=\,\frac{\log f(t_1)-\log f(t_0)}{\log t_1-\log t_0},$$
		and it has the Y-intercept
			$$\beta_f(t_0,t_1)\,=\,\frac{\log f(t_0)\log t_1-\log f(t_1)\log t_0}{\log t_1-\log t_0}.$$
		If $f$ is multiplicatively convex with bounded exponent, then $\mathcal{P}_f$ is 
		a convex graph. The constants $\mathrm{deg}^{\mathtt{b}}_{+\infty}(f)$ 
		and $\mathrm{deg}^{\mathtt{b}}_{0+}(f)$ are exactly the supremum and 
		the infimum for slope of chords of $\mathcal{P}_f$, respectively, (Lemma \ref{degree-bCharacterization}).
		For any such $f$, it is easy to see that in as $t\to+\infty$, 
		the asymptotic formula
			$$f(t)\sim C_{+\infty}\cdot t^{\mathrm{deg}^{\mathtt{b}}_{+\infty}(f)}$$
		holds for some constant $C_{+\infty}\in\Real_+$ if and only if the following limit exists in $\Real$:
			$$\beta_{+\infty}(f)\,=\,\lim_{t_0,t_1\to+\infty}\beta_f(t_0,t_1),$$
		(which otherwise diverges to $-\infty$). Moreover, $\log C_{+\infty}$ must be $\beta_{+\infty}(f)$ if the asymptotic formula holds.
		The same criterion holds for $0+$ in place of $+\infty$. 
		We also observe that if $\beta_f(t_0,t_1)$ is uniformly bounded below by some constant $L\in\Real$
		for all pairs of distinct parameters $t_0,t_1\in\Real_+$, then equivalently,
		the curve $\mathcal{P}_f$
		is contained entirely in the wedge region $\mathcal{V}(L,f)$ supported on the two rays
		emanating from the point $(0,L)$ along the directions
		$(-1,-\mathrm{deg}^{\mathtt{b}}_{0+}(f))$ and $(1,\mathrm{deg}^{\mathtt{b}}_{+\infty}(f))$. 
		
		To prove Lemma \ref{mConvexVersion}, we observe from the geometric meaning that 
		the limit Y-intercept $C_{+\infty}$ is at most $\hat{f}(1)$.		
		It remains to bound $C_{+\infty}$ from below by $e^L$,
		or equivalently, to show that the log--log plot of the function $\hat{f}$ is contained
		in the wedge region $\mathcal{V}(L,\hat{f})$.
		
		We argue by contradiction, supposing that there
		were a point $P=\mathcal{P}_{\hat{f}}(T_0)$ lying outside $\mathcal{V}(L,\hat{f})$.
		By the first condition, the curves $\mathcal{P}_n$ of $f_n$ are all contained in
		their own wedge regions $\mathcal{V}(L,f_n)$. In particular, 
		the second condition implies that $T_0\neq1$.
		Let $3\delta\cdot |\log T_0|$ be the vertical distance of 
		$P$ from $\mathcal{V}(L,\hat{f})$.
		For all sufficiently large $n$, the second condition implies that the right side of $\mathcal{V}(L,f_n)$
		is at most $\delta \cdot|\log T_0|$ above $P$. Then the third condition forces 
		the slope of the left side of $\mathcal{V}(L,f_n)$ to be at least 
		$\delta$ less than that of $\mathcal{V}(L,\hat{f})$ for all sufficiently large $n$.
		Consequently, for some parameter value $T_1\in\Real_+$ that is sufficiently close to $0+$, 
		the curve point $Q=\mathcal{P}_{\hat{f}}(T_1)$ 
		must stay uniformly below the left sides of all those $\mathcal{V}(L,f_n)$,
		for instance, of distance at least $1$.
		However, we see that the second condition is violated at the point $Q$: We have shown
		that the curves $\mathcal{P}_{n}$ 
		would have been at least distance $1$ above $Q$ for all sufficiently large $n$.
		The contradiction completes the proof.
	\end{proof}

	\begin{lemma}\label{coefficientVA}
		Let $(\pi,\phi,\gamma)$ be an admissible triple over $\Real$ with a target group $G$.
		Let $A$ be a square matrix over $\Integral\pi$.	
		Suppose that $G$ is finitely generated and virtually abelian.
		Then for all pairs of distinct
		points $t_0,t_1\in\Real_+$,
			$$\frac{\log V_G(t_0)\log t_1-\log V_G(t_1)\log t_0}{\log t_1-\log t_0}\,\geq\, 0,$$
		unless $V_G(t)$ is constantly zero. 
	\end{lemma}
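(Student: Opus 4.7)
The plan is to reduce to the case where $G$ is a finitely generated free abelian group, and then exploit the Mahler measure formula from the proof of Lemma~\ref{mConvex-eBounded-FA} together with the integrality of $A$. First I would apply the restriction trick used in the proof of Proposition~\ref{mConvex-eBounded-VA}: choose a free abelian subgroup $\tilde G\leq G$ of finite index, set $\tilde\pi=\gamma^{-1}(\tilde G)$, and replace $A$ by its restriction $\mathrm{res}^{\tilde\pi}_\pi A$, which remains a square matrix over $\Integral\tilde\pi$. The relation $V_G(t)^{[\pi:\tilde\pi]}=\tilde V(t)$ scales the Y-intercept of every chord of $\log V_G$ by the positive factor $1/[\pi:\tilde\pi]$, so the sign is preserved, and we may further pass to the image of $\gamma$ to assume that $\gamma$ is surjective.

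In the free abelian case the proof of Lemma~\ref{mConvex-eBounded-FA} provides
\[
V_G(t)\,=\,M\!\left(p_A(t^{a_1}z_1,\ldots,t^{a_l}z_l)\right),
\]
where $p_A=\sum_{\vec m}c_{\vec m}z^{\vec m}\in\Integral[z_1^{\pm 1},\ldots,z_l^{\pm 1}]$ is the Laurent polynomial $\det A_G(\vec 1)$ (assumed nonzero, since $V_G$ is not constantly zero) and $a_j=\phi(z_j)\in\Real$. Let $M_+=\max\{\vec a\cdot\vec m:c_{\vec m}\neq 0\}$ and let $q_+(\vec z)=\sum_{\vec a\cdot\vec m=M_+}c_{\vec m}z^{\vec m}$ be the top face subpolynomial in that direction; define $M_-$ and $q_-$ symmetrically. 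Both $q_\pm$ are nonzero integer Laurent polynomials, so by iterated Jensen integration their Mahler measures satisfy $M(q_\pm)\geq 1$. Since $t^{-M_+}p_A(t^{a_1}z_1,\ldots,t^{a_l}z_l)$ converges uniformly on the torus $\{|z_j|=1\}$ to $q_+(\vec z)$ as $t\to+\infty$, D.~Boyd's continuity of Mahler measure (the same result invoked at the end of the proof of Lemma~\ref{mConvex-eBounded-FA}) yields $t^{-M_+}V_G(t)\to M(q_+)\geq 1$, and symmetrically $t^{-M_-}V_G(t)\to M(q_-)\geq 1$ as $t\to 0+$.

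The final step translates these asymptotics into the desired chord-intercept bound. Set $Y(X)=\log V_G(e^X)$, which is convex in $X\in\Real$ by Proposition~\ref{mConvex-eBounded-VA}. The convex functions $Y(X)-M_\pm X$ have finite nonnegative limits $\log M(q_\pm)$ at $\pm\infty$, and a convex function with a finite limit at one end is automatically monotone on all of $\Real$ (else a positive slope somewhere would be propagated by convexity and force divergence), so $Y(X)\geq M_\pm X+\log M(q_\pm)\geq M_\pm X$ for every $X\in\Real$. For a chord on $[X_0,X_1]$ of slope $s$, convexity of $Y$ together with the asymptotic slopes $M_\pm$ forces $s\in[M_-,M_+]$, so $M_+-s\geq 0$ and $M_--s\leq 0$. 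The chord Y-intercept is $\beta=Y(X_0)-sX_0$, and the two inequalities just derived give $\beta\geq(M_+-s)X_0\geq 0$ when $X_0\geq 0$ and $\beta\geq(M_--s)X_0\geq 0$ when $X_0\leq 0$, so $\beta\geq 0$ in either case.

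The chief obstacle is the second paragraph, namely correctly isolating the face polynomials $q_\pm$ and passing continuously from the convergence of the deformed integer polynomial to the convergence of its Mahler measure via Boyd's theorem; the integrality of $A$ enters only through this step, guaranteeing $M(q_\pm)\geq 1$. Once these asymptotic coefficients are known to be at least $1$, the convex-geometric estimate in the third paragraph runs essentially automatically.
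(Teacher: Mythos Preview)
Your proposal is correct and follows essentially the same approach as the paper: reduce to the free abelian case via restriction, identify the top/bottom face subpolynomials of the integer Laurent polynomial $p_A$, use Boyd's continuity of Mahler measure to obtain asymptotically monomial behavior with leading coefficients $M(q_\pm)\geq 1$ (integrality being the crucial input), and then translate this into the chord Y-intercept inequality by convex geometry. The paper routes the asymptotic computation through an auxiliary decomposition $\phi=r_1\phi_1+\cdots+r_d\phi_d$ with rationally independent $r_i$, whereas you work directly with the single direction vector $\vec a=(\phi(z_1),\ldots,\phi(z_l))$; your formulation is a bit more streamlined but lands on exactly the same face polynomial, and your final convex-geometric step makes explicit what the paper leaves to the reference back to Lemma~\ref{mConvexVersion}.
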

	
	\begin{proof}
		By Theorem \ref{mConvex-eBounded}, the function $V_G(t)$ is either constantly zero or
		multiplicatively convex with bounded exponent. It suffices to consider the latter case.
		By the geometric meaning of the expression explained in the proof of Lemma \ref{mConvexVersion},
		we can equivalently prove that $V_G(t)$ is asymptotically monomial in both ends with the coefficient 
		no less than $1$.		
		
		We start by a few reductions. Observe that whether or not the asserted inequality holds true
		does not change under passage from $G$ to any finite index subgroup $\tilde{G}$ of $\gamma(\pi)$.
		Indeed, by basic properties of regular Fuglede--Kadison determinants,
		\begin{eqnarray*}
			V_G(t)&=&\mathrm{det}^{\mathtt{r}}_{\mathcal{N}(G)}(\kappa(\gamma,\phi,t)(A))\\
			&=&\mathrm{det}^{\mathtt{r}}_{\mathcal{N}(\gamma(\pi))}(\kappa(\gamma,\phi,t)(A))\\
			&=&\mathrm{det}^{\mathtt{r}}_{\mathcal{N}(\tilde{G})}
			\left(\kappa(\gamma,\phi,t)(\mathrm{res}_{\gamma(\pi)}^{\tilde{G}}(A))\right)^{\frac{1}{[\gamma(\pi):\tilde{G}]}}\\
			&=& V_{\tilde{G}}(t)^{\frac{1}{[\gamma(\pi):\tilde{G}]}}.
		\end{eqnarray*}
		Therefore, possibly after replacing $G$ with a finite index subgroup $\tilde{G}$ of $\gamma(\pi)$,
		and replacing $\pi$ with $\gamma(\pi)$, we may assume 
		without loss of generality
		that $\gamma$ is an isomorphism,
		and $G$ is a finitely generated free abelian group.
		
		After these reductions, we denote by $l$ the rank of $G$ and 
		identify $\Complex G$ with the Laurent polynomial ring
		$\Complex[z_1^{\pm1},\cdots,z_l^{\pm1}]$. 
		Choose a basis $r_1,\cdots,r_d\in\Real_+$ of
		the $\Rational$-vector space spanned by $\phi(\pi)$
		such that elements of $\phi(\pi)$ are $\Integral$-linear combinations of $r_i$.
		Then we can uniquely decompose $\phi$ as a sum:
			$$\phi\,=\,r_1\phi_1+\cdots+r_d\phi_d$$
		where $\phi_i$ are homomorphisms in $\mathrm{Hom}(\pi,\Integral)$.
		
		As in the proof of Lemma \ref{mConvex-eBounded-FA}, the function $V_G(t)$ can be
		expressed in terms of a multivariable determinant function:
			$$V_G(t)\,=\,W((t^{r_1},\cdots,t^{r_d})),$$
		where for any vector $\vec{t}=(t_1,\cdots,t_d)\in \Real_+^d$,		
		\begin{eqnarray*}
			W(\vec{t})&=&\mathrm{det}^{\mathtt{r}}_{\mathcal{N}(G)}\left(A_G(\vec{t})\right)\\
			&=&M(p_A(\tilde{t}_1z_1,\cdots,\tilde{t}_lz_l))\\
			&=&\exp\left[\frac{1}{(2\pi)^l}\cdot\int_0^{2\pi}\cdots\int_0^{2\pi}
			\log(|p_A(\tilde{t}_1e^{\mathbf{i}\theta_1},\cdots,\tilde{t}_le^{\mathbf{i}\theta_l})|)\ud\theta_1,\cdots\ud\theta_l\right],
		\end{eqnarray*}
		and for each $j$,
			$$\tilde{t}_j=t_1^{\phi_1(z_j)}\cdot\cdots\cdot t_d^{\phi_d(z_j)}.$$
		Recall the notations there that the Laurent polynomial matrix
			$$A_G(\vec{t})\,=\,\kappa(\phi,\gamma,\vec{t})(A)\,\in\, \Matrix_{p\times p}(\Complex[z_1^{\pm1},\cdots,z_l^{\pm1}]).$$
		is defined using the homomorphism of matrix algebras $\kappa(\phi,\gamma,\vec{t})$ determined by the formula 
			$$\kappa(\phi,\gamma,\vec{t})(g)\,=\,t_1^{\phi_1(g)}\cdots t_d^{\phi_d(g)}\gamma(g)$$
		for all $g\in\pi$.			
		The usual determinant of the Laurent polynomial matrix 
		$A_G(\vec{t})$ at the diagonal vector $\vec{1}=(1,\dots,1)\in\Integral^d$ gives rise to the Laurent polynomial
			$$p_A(z_1,\cdots,z_l)\,=\mathrm{Det}_{\Complex[z_1^{\pm1},\cdots,z_l^{\pm1}]}\left(A_G(\vec{1})\right).$$
				
		The idea is to govern the asymptotics of $V_G(t)$ by the fact that $p_A$ is a Laurent polynomial over $\Integral$,
		since $A$ is assumed to be over $\Integral\pi$.
		To this end, expand the Laurent polynomial $p_A$ as
			$$p_A(z_1,\cdots,z_l)\,=\,\sum_{\vec{v}\in\Integral^l}a_{\vec{v}}z_1^{v_1}\cdots z_l^{v_l}$$
		where $v_i$ are the entries of $\vec{v}\in\Integral^l$.
		Only finitely many coefficients $a_{\vec{v}}$ in the summation are nonzero. 
		For any vector $\vec{v}\in\Integral^l$, denote 
			$$\Phi\vec{v}\,=\,(\phi_1(z_1^{v_1}\cdots z_l^{v_l}),\cdots,\phi_d(z_1^{v_1}\cdots z_l^{v_l}))\,\in\,\Integral^d.$$
		Denote by $\vec{r}\in\Real_+^d$ the vector $(r_1,\cdots,r_d)$. 
		Let $\vec{w}_{\mathtt{top}}\in\Integral^d$ be the unique vector
		at which the maximum of the following set is achieved:
			$$\left\{\,\langle\, \vec{r},\,\vec{w}\,\rangle\in\Real\,:\sum_{\Phi\vec{v}=\vec{w}}\,a_{\vec{v}}\neq0\,\right\}.$$
		The uniqueness is a consequence of the linear independence of $r_1,\cdots,r_d$ over $\Rational$.
		The integrand for $V_G(t)$, denoted as $\omega(t,\vec{\theta})$, can be calculated by:
		\begin{eqnarray*}
		\omega(t,\vec{\theta})&=&
		\log\left|p_A(t^{r_1\phi_1(z_1)+\cdots+r_d\phi_d(z_1)}e^{\mathbf{i}\theta_1},\cdots,t^{r_1\phi_1(z_l)+\cdots+r_d\phi_d(z_l)}e^{\mathbf{i}\theta_l})\right|\\
		&=&
			\log\left|\sum_{\vec{w}\in\Integral^d}\sum_{\Phi\vec{v}=\vec{w}}a_{\vec{v}}
			\,t^{\langle \vec{r},\,\Phi\vec{v}\rangle}e^{\mathbf{i}\vec{\langle\theta},\vec{v}\rangle}\right|\\
			&=&
			\log\left|
			\sum_{\Phi\vec{v}=\vec{w}_{\mathtt{top}} }a_{\vec{v}}
			\,t^{\langle \vec{r},\,\vec{w}_{\mathtt{top}}\rangle}e^{\mathbf{i}\vec{\langle\theta},\vec{v}\rangle}
			+
			\sum_{\Phi\vec{v}\neq\vec{w}_{\mathtt{top}}}a_{\vec{v}}
			\,t^{\langle \vec{r},\,\Phi\vec{v}\rangle}e^{\mathbf{i}\vec{\langle\theta},\vec{v}\rangle}\right|\\
			&=&
			\log\left|
			\sum_{\Phi\vec{v}=\vec{w}_{\mathtt{top}} }a_{\vec{v}}
			e^{\mathbf{i}\vec{\langle\theta},\vec{v}\rangle}
			+
			\sum_{\Phi\vec{v}\neq\vec{w}_{\mathtt{top}}}a_{\vec{v}}
			\,t^{\langle \vec{r},\,\Phi\vec{v}-\vec{w}_{\mathtt{top}}\rangle}e^{\mathbf{i}\vec{\langle\theta},\vec{v}\rangle}\right|
			+\langle \vec{r},\vec{w}_{\mathtt{top}}\rangle\cdot\log t.			
		\end{eqnarray*}
		Accordingly, the integral
		$$\log V_G(t)\,=\,\frac{1}{(2\pi)^l}\int_0^{2\pi}\cdots\int_0^{2\pi} \omega(t,\vec{\theta})\,\ud \theta_1\cdots\ud \theta_l$$
		breaks into the sum of two terms. 
		The first term gives rise to the logarithmic Mahler measure
		of the Laurent polynomial
		$$q_t(z_1,\cdots,z_l)\,=\,
		\sum_{\Phi\vec{v}=\vec{w}_{\mathtt{top}}}a_{\vec{v}}
			z_1^{v_1}\cdots z_l^{v_l}
			+
			\sum_{\Phi\vec{v}\neq\vec{w}_{\mathtt{top}}}a_{\vec{v}}
			\,t^{\langle \vec{r},\,\Phi\vec{v}-\vec{w}_{\mathtt{top}}\rangle}z_1^{v_1}\cdots z_l^{v_l}.$$
		By the way $\vec{w}_{\mathtt{top}}$ is selected, as $t$ tends to $+\infty$,
		the coefficients of $q_t$ converge to those of its chief part
		$$q_{+\infty}(z_1,\cdots,z_l)\,=\,\sum_{\Phi\vec{v}=\vec{w}_{\mathtt{top}}}a_{\vec{v}}
			z_1^{v_1}\cdots z_l^{v_l}.$$
		Thus, by the continuity of Mahler measure \cite{Boyd},
		the first term of $\log V_G(t)$ approximates the logarithmic Mahler measure of $q_{+\infty}$
		as $t\to+\infty$.
		The second term is just the integration against  
		$\langle \vec{r},\vec{w}_{\mathtt{top}}\rangle\cdot\log t$, which is constant 
		with respect to $\vec{\theta}$.
		Putting together, as $t\to +\infty$,
		$$\log V_G(t)\,=\,\log M(q_{+\infty})+\langle \vec{r},\vec{w}_{\mathtt{top}}\rangle\cdot\log t+o(1).$$
		
		The calculation yields the asymptotic formula:
		$$V_G(t)\,\sim\, C_{+\infty}\cdot t^{\langle \vec{r},\vec{w}_{\mathtt{top}}\rangle}$$
		as $t\to +\infty$. The coefficient satisfies the asserted estimation
		$$C_{+\infty}\,=\,M(q_{+\infty})\,\geq\,1,$$
		because $q_{+\infty}$ is a Laurent polynomial over $\Integral$, cf.~\cite[Lemma 3.7]{Everest--Ward}.
		The same argument works for $V_G(t^{-1})$ as well, which proves the $0+$ direction. We conclude that
		$V_G(t)$ is asymptotically monomial in both ends with the coefficient 
		greater than or equal to $1$. This completes the proof.
	\end{proof}

	\begin{proof}[{Proof of Theorem \ref{rationalAsymptotic}}]
		We adopt the notations of the statement. By 
		Theorem \ref{mConvex-eBounded} and Lemma \ref{zeroOrNot}, 
		the third assumption implies that the function $V_G(t)$ is positive for all $t\in\Real_+$.
		By Lemma \ref{stable-semicontinuous}, the second condition of Lemma \ref{mConvexVersion}
		is satisfied for $V_G(t)$ and $\{V_n(t)\}_{n\in\Natural}$.
		By Lemma \ref{coefficientVA},
		the functions $\{V_n(t)\}_{n\in\Natural}$
		satisfy the first condition of Lemma \ref{mConvexVersion}.
		The third condition of Lemma \ref{mConvexVersion} has been guaranteed by
		the assumption of Theorem \ref{rationalAsymptotic}.
		Therefore, Lemma \ref{mConvexVersion} implies that $V_G(t)$ is asymptotically
		monomial in both ends with the coefficient lying in the interval $[1,V_G(1)]$.
		This completes the proof of Theorem \ref{rationalAsymptotic}.
	\end{proof}

\section{$L^2$--Alexander torsion of $3$-manifolds}\label{Sec-mainProofs}
	
	In this section, we study $L^2$--Alexander torsion of $3$-manifolds
	using the tools that we have developed.
	In subsection \ref{Subsec-efficientCellularPresentation}, 
	we recall a formula for calculation used by \cite{DFL-torsion}.
	We prove Theorem \ref{main-torsion-weak} in Subsection \ref{Subsec-degreeRFTwist},
	and Theorem \ref{main-torsion} in Subsection \ref{Subsec-degreeFullTwist}.
	
		\subsection{Efficient cellular presentation}\label{Subsec-efficientCellularPresentation}
	To calculate $L^2$--Alexander torsion of 3-manifolds, the following formula has been used
	by \cite[Proposition 9.1]{DFL-torsion}, and we state it in some more details.
	
	\begin{lemma}\label{torsionToMatrix}
		Suppose that $N$ is an irreducible orientable compact $3$-manifold 
		with empty or incompressible toral boundary.	
		There exist elements
		$u_1,v_1,\cdots, u_l,v_l\in \pi_1(N)$ and a square matrix $A$ over $\Integral\pi_1(N)$ 
		such that the following holds true. 
		The homological classes $[u_i]-[v_i]$ are nontrivial in $H_1(N;\Rational)$.
		Furthermore, for every homomorphism $\gamma:\,\pi_1(N)\to G$ 
		which induces an isomorphism under $H_1(-;\Real)$,
		and for every cohomology class $\phi\in H^1(N;\,\Real)$,
		\begin{eqnarray*}
			\tau^{(2)}(N,\gamma,\phi)(t)&\doteq& 
			\mathrm{det}^{\mathtt{r}}_{\mathcal{N}(G)}(\kappa(\gamma,\phi,t)(A))
			\cdot\prod_{i=1}^l\mathrm{det}^{\mathtt{r}}_{\mathcal{N}(G)}(\kappa(\gamma,\phi,t)(u_i-v_i))^{-1}\\
			&=&
			\mathrm{det}^{\mathtt{r}}_{\mathcal{N}(G)}(\kappa(\gamma,\phi,t)(A))
			\cdot\prod_{i=1}^l\max\{t^{\phi(u_i)},t^{\phi(v_i)}\}^{-1}.
		\end{eqnarray*}
		Moreover, given any primitive cohomology $\phi_0\in H^1(N;\,\Integral)\cong \mathrm{Hom}(\pi_1(N),\Integral)$
		in the first place, we may require in addition that $\phi_0(u_i)\neq\phi_0(v_i)$ for $i=1,\cdots,l$, and 
		that $A$ has the form:
			$$A_0+\mu\cdot\left(\begin{matrix}\mathbf{1}_{k\times k}&0\\0&0\end{matrix}\right),$$
		where	$A_0$ is a square matrix over $\Integral\mathrm{Ker}(\phi_0)$, and $\phi_0(\mu)=1$, and  
			$$k-l\,=\,x_N(\phi_0).$$
	\end{lemma}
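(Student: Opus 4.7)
The plan is to construct a concrete CW decomposition of $N$ whose cellular chain complex, twisted through $\kappa(\gamma,\phi,t)$, becomes an $L^2$--acyclic complex of determinant class in which only a single ``middle'' boundary matrix carries nontrivial content; the alternating product formula for the torsion then collapses to the asserted expression. Since $\gamma$ induces an isomorphism on $H_1(\,\cdot\,;\Real)$, Lemma~\ref{homologicallyIsomorphic} ensures admissibility of every $\phi$, so this is legal. I would start with a CW structure on $N$ having a single $0$--cell: when $\partial N\neq\emptyset$, any $2$--dimensional spine works; when $N$ is closed, a Heegaard splitting gives one $0$--cell, $g$ one--cells, $g$ two--cells, and one $3$--cell. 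In either case $\partial_1$ is a single row of entries $g_i-1$ and, if present, $\partial_3$ is a single column. The multiplicative alternating product for $\tau^{(2)}$ turns the $0$--cell and one distinguished $1$--cell into a reciprocal pair that cancels, the remaining $l$ one--cells give the factors $\det^{\mathtt r}(\kappa(u_i-v_i))^{-1}$ with $v_i=1$, and the matrix $A$ records the remaining block of $\partial_2$ (with the $3$--cell column, if any, absorbed into $A$ up to a monic power of $t$, which is harmless under $\doteq$). That each $[u_i]-[v_i]\in H_1(N;\Rational)$ is nontrivial can be secured by choosing the $1$--cells so that their classes are part of a $\Rational$--basis for the free part of $H_1$.

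For the ``moreover'' clause, I would refine the construction using a norm--minimizing, $\phi_0$--dual, properly embedded oriented surface $\Sigma\subset N$; such $\Sigma$ exists by Thurston's theorem and realizes $x_N(\phi_0)=-\chi(\Sigma)$. Cutting $N$ along $\Sigma$ yields a compact $M$ containing $\Sigma^+\sqcup\Sigma^-$ in its boundary, with $\pi_1(M)\hookrightarrow\ker(\phi_0)$. I would then equip $\Sigma$ with a one--vertex CW structure, extend to a compatible CW structure on $M$, and reglue $\Sigma^+$ to $\Sigma^-$ by means of a distinguished $1$--cell $\mu$ satisfying $\phi_0(\mu)=1$. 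With this setup the cells of $\Sigma^+$ are identified to $\mu$--translates of the corresponding cells of $\Sigma^-$, which is precisely what produces a $k\times k$ identity block multiplied by $\mu$ inside $\partial_2$; the remainder $A_0$ involves only cells and attaching words supported inside $\pi_1(M)\subset\ker(\phi_0)$, hence lies in $\Integral\ker(\phi_0)$. The other $1$--cells $u_i,v_i$ can be chosen from generators of $\pi_1(M)$ together with $\mu$, arranged so that $\phi_0(u_i)\neq\phi_0(v_i)$ (this uses primitivity of $\phi_0$ and the freedom to trade handles in $M$ against $\mu$). A careful cell count then yields the identity $k-l=-\chi(\Sigma)=x_N(\phi_0)$.

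The routine portion is the algebraic manipulation of the twisted chain complex (already carried out in \cite[Proposition~9.1]{DFL-torsion}); the substantive obstacle is the topological bookkeeping in the ``moreover'' clause, namely coordinating the CW structure on $M$ with the identification across $\Sigma^\pm$ so that (i) $A$ is square, (ii) the $\mu\cdot\mathbf{1}_{k\times k}$ block appears cleanly at the right size, and (iii) the balance $k-l=x_N(\phi_0)$ comes out exactly. This requires using the norm--minimizing property of $\Sigma$ together with Haken--style handle moves on $M$ to eliminate superfluous $1$--cells crossing $\Sigma$; once the cells are sorted into ``tangential to $\Sigma$'' versus ``$\mu$--like,'' the block structure and the Euler--characteristic equality follow essentially from $\chi(M)=\chi(\Sigma)=-x_N(\phi_0)$ via the rank--nullity arithmetic of the cellular chain complex.
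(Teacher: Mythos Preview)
Your general spine/Heegaard approach contains a genuine error in how the torsion collapses. For a $2$--complex with a single $0$--cell, $n$ one--cells, and $n-1$ two--cells, the alternating product $\det_{\mathcal{N}(G)}(\partial_2)\cdot\det_{\mathcal{N}(G)}(\partial_1)^{-1}$ does \emph{not} factor into $n-1$ terms $\det(g_i-1)^{-1}$; the Fuglede--Kadison determinant of the $1\times n$ row $\partial_1$ is a single number, not a product over the one--cells. The standard Fox--calculus reduction yields at most \emph{one} denominator factor $\det(g_j-1)^{-1}$ together with one square $(n-1)\times(n-1)$ minor of $\partial_2$. So your claim that ``the remaining $l$ one--cells give the factors $\det^{\mathtt r}(\kappa(u_i-v_i))^{-1}$'' cannot be right, and with it the arithmetic leading to $k-l=x_N(\phi_0)$ collapses.

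The paper does not separate the two parts of the lemma. It fixes a primitive $\phi_0$ from the outset and uses Friedl's tailored CW structure \cite[Section~4]{Friedl}: one takes disjoint weighted surfaces $\Sigma_1,\ldots,\Sigma_s$ with $\sum r_i[\Sigma_i]$ dual to $\phi_0$, $\sum -r_i\chi(\Sigma_i)=x_N(\phi_0)$, and \emph{connected complement}. The boundary maps in this structure contain $s$ distinguished $2\times 2$ blocks of the shape $\left(\begin{smallmatrix}1&-\nu_i\\1&-z_i\end{smallmatrix}\right)$ (with $\phi_0(\nu_i)=r_i$, $\phi_0(z_i)=0$); row--reducing each such block produces a factor $\det^{\mathtt r}(z_i-\nu_i)$, and a parallel set of $s$ blocks on the other side of the complex gives $s$ more. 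Thus $l=2s$, and the homological nontriviality of $[u_i]-[v_i]$ is automatic because $\phi_0$ detects it. The remaining big block is $A$, which after elementary manipulations takes the stated $A_0+\mu\cdot\mathbf{1}_{k\times k}$ form.

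Your ``moreover'' sketch is closer in spirit to this, but it assumes a single connected norm--minimizing $\Sigma$ with connected complement $M$. For a primitive $\phi_0$ such a surface need not exist: tubing components of a disconnected minimizer raises $\chi_-$, so connectedness and norm--minimality can genuinely conflict. Friedl's weighted multi--surface setup is exactly what circumvents this, and it is also what makes the block count $n_i=-\chi(\Sigma_i)+2$ and the final identity $k-l=x_N(\phi_0)$ come out cleanly. I would recommend reading \cite[Section~4]{Friedl} and \cite[Proposition~9.1]{DFL-torsion} and reorganizing your argument around that construction rather than a generic spine.
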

	
	\begin{proof}
		We may assume that $H_1(N;\,\Real)$ is nontrivial since otherwise the $L^2$--Alexander torsion is constant.
		Take any primitive cohomology class $\phi_0\in H^1(N;\,\Integral)$, for example, as specified
		in the moreover part. We employ the construction of 
		S.~Friedl in \cite[Section 4]{Friedl} to produce a $\pi_1(N)$--equivariant CW complex structure
		on the universal cover of $N$. To be precise, there exist finitely many 
		properly embedded oriented compact subsurface $\Sigma_1,\cdots,\Sigma_s$ and accordingly
		$r_1,\cdots,r_s\in\Natural$, satisfying the following properties:
		\begin{itemize}
		\item $r_1[\Sigma_1]+\cdots+r_s[\Sigma_s]\in H_2(N,\partial N;\,\Integral)$ is dual to $\phi_0$.
		\item $-r_1\chi(\Sigma_1)-\cdots-r_s\chi(\Sigma_s)=x(\phi_0)$.
		\item $\Sigma_i$ are mutually disjoint and the complement of their union in $N$ is connected.
		\end{itemize}
		The calculation here is the same as \cite[Proposition 9.1]{DFL-torsion} except that instead of 
		computing square matrices induced by $\kappa(\gamma,\phi_0,t)$ there, we compute those induced by
		$\kappa(\gamma,\phi,t)$ for any class $\phi\in H^1(N;\,\Real)$. For example, the determinant
		contribution from a block
		$$\left[\begin{matrix}1&-\nu_i\\1&-z_i\end{matrix}\right]\in\Matrix_{2\times 2}(\Integral\pi_1(N)),$$
		where $i$ runs over $1,\cdots,s$
		becomes:
		\begin{eqnarray*}
		\mathrm{det}^{\mathtt{r}}_{\mathcal{N}(G)}\left(\kappa(\gamma,\phi,t)
		\left[\begin{matrix}1&-\nu_i\\1&-z_i\end{matrix}\right]\right)&=&
		\mathrm{det}^{\mathtt{r}}_{\mathcal{N}(G)}
		{\left[\begin{matrix}1&-t^{\phi(\nu_i)}\gamma(\nu_i)\\1&-t^{\phi(z_i)}\gamma(z_i)\end{matrix}\right]}\\
		&=&
		\mathrm{det}^{\mathtt{r}}_{\mathcal{N}(G)}
		{\left[\begin{matrix}1-t^{\phi(\nu_iz_i^{-1})}\gamma(\nu_iz_i^{-1})&-t^{\phi(\nu_i)}\gamma(\nu_i)\\0&-t^{\phi(z_i)}\gamma(z_i)\end{matrix}\right]}\\	
		&=&
		\mathrm{det}^{\mathtt{r}}_{\mathcal{N}(G)}\left(\kappa(\gamma,\phi,t)(z_i-\nu_i)\right).
		\end{eqnarray*}
		The elements $\nu_i$ and $z_i$ arising from Friedl's construction satisfy
		$\phi_0(\nu_i)=r_i$ and $\phi_0(z_i)=0$. Since $(\pi,\gamma,\phi_0)$ is an admissible triple
		and $\phi_0(\nu_i)-\phi_0(z_i)=r_i\neq0$,	the element $\gamma(\nu_iz_i^{-1})$ must have infinite order in $G$.
		Then \cite[Lemma 2.8]{DFL-torsion} yields
		\begin{eqnarray*}
		\mathrm{det}^{\mathtt{r}}_{\mathcal{N}(G)}\left(\kappa(\gamma,\phi,t)(z_i-\nu_i)\right)&=&
		\mathrm{det}^{\mathtt{r}}_{\mathcal{N}(G)}\left(t^{\phi(z_i)}\gamma(z_i)-t^{\phi(\nu_i)}\gamma(\nu_i)\right)\\
		&=&
		t^{\phi(z_i)}\cdot\max\{1,t^{\phi(\nu_iz_i^{-1})}\}\\
		&=&
		\max\{t^{\phi(z_i)},t^{\phi(\nu_i)}\}.
		\end{eqnarray*}
		The point here is that we do not need to require $\phi(\nu_i)-\phi(z_i)\neq0$ for all $\phi$.
		With the modification above, we see that 
		$$u_1,v_1,\cdots,u_s,v_s\in \pi_1(N)$$ can be taken to be
		$z_1,\nu_1,\cdots,z_s,\nu_s$. Similarly, we take
		$$u_{s+1},v_{s+1},\cdots,u_{2s},v_{2s}\in \pi_1(N)$$ to be
		$x_1,\nu_1,\cdots,x_s,\nu_s$ in the notations of \cite[Proposition 9.1]{DFL-torsion},
		where $\phi_0(x_i)=0$ for all $i=1,\cdots,s$.
		This gives rise to a total number of $l=2s$ pairs of $u_i$ and $v_i$.
		The matrix $A$ is a square matrix over $\Integral\pi_1(N)$ of the form
		$$\left[\begin{matrix}
		\mathbf{1}_{n_1\times n_1}&-\nu_1\cdot\mathbf{1}_{n_1\times n_1}&0&0&\cdots&0&0\\
		0&0&\ddots&\ddots&0&0&0\\
		0&\cdots&0&0&\mathbf{1}_{n_s\times n_s}&-\nu_s\cdot\mathbf{1}_{n_s\times n_s}&0\\
		*&\cdots&\cdots&*&*&*&*
		\end{matrix}\right],$$
		where $n_i=-\chi(\Sigma_i)+2$, and 
		$*$ stand for (not necessarily square) blocks with entries in $\Integral\mathrm{Ker}(\phi_0)$,
		and $\phi_0(\nu_i)=r_i$.
		
		One can further manipulate the matrix $A$ into the asserted form without 
		affecting the regular Fuglede--Kadison determinant under $\kappa(\phi,\gamma,t)$.
		This can be done by adding diagonal $\mathbf{1}_{1\times 1}$ blocks and performing
		elementary transformations using well known tricks, 
		so we omit the details, cf. \cite[Proposition 9.3]{DFL-torsion}.
	\end{proof}

\subsection{Degree for residually finite twists}\label{Subsec-degreeRFTwist}
	In this subsection, we prove Theorem \ref{main-torsion-weak}.
	Throughout this subsection, 
	let $N$ be an irreducible orientable compact $3$-manifold with empty or incompressible toral boundary,
	and $\gamma:\pi_1(N)\to G$ be a homomorphism.
	Suppose that $G$ is finitely generated and residually finite
	and $(N,\gamma)$ is weakly acyclic.
		
	For any admissible triple $(\pi_1(N),\gamma,\phi)$ over $\Real$,
	denote by
			$$\tau^{(2)}(N,\gamma,\phi):\,\Real_+\to[0,+\infty)$$
	any representative of the associated $L^2$--Alexander torsion.
	
	\begin{lemma}\label{nonzeroTorsion}
		Given any admissible triple $(\pi_1(N),\gamma,\phi)$ over $\Real$,
			$$\tau^{(2)}(N,\gamma,\phi)(1)\,>\,0.$$
	\end{lemma}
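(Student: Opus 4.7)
At $t=1$, the $L^2$--Alexander twist $\kappa(\phi,\gamma,1)$ reduces to $\gamma$ itself, since $1^{\phi(g)}=1$ for all $g\in\pi_1(N)$. Consequently, the twisted Hilbert $\mathcal{N}(G)$--chain complex $\ell^2(G)\otimes_{\Integral\pi_1(N)}C_*(\widehat{N})$ at $t=1$ is independent of $\phi$ and agrees, via inflation from $\mathrm{Im}(\gamma)$ to $G$, with the $L^2$--chain complex of the covering $\bar N\to N$ corresponding to $\mathrm{Ker}(\gamma)$. The plan is to check weak acyclicity and the determinant-class property separately, so that the alternating product defining $\tau^{(2)}(N,\gamma,\phi)(1)$ is a finite positive number.

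First, I would invoke the weak acyclicity hypothesis. The covering $\bar N$, regarded as an $\mathrm{Im}(\gamma)$--space, has vanishing $L^2$--Betti numbers by assumption. Inflating from an $\mathrm{Im}(\gamma)$--space to a $G$--space merely rescales the relevant von Neumann dimensions by a positive constant and preserves vanishing, so the complex $\ell^2(G)\otimes_{\Integral\pi_1(N)}C_*(\widehat{N})$ is weakly acyclic over $\mathcal{N}(G)$.

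Second, I would verify the determinant-class property at $t=1$. Each boundary operator $\mathbf{1}\otimes\partial_k$ is the image under $\gamma$ of an integral boundary matrix of $C_*(\widehat{N})$, hence a square matrix over $\Integral G$. Because $G$ is residually finite, L\"uck's approximation theorem (the same result invoked in Lemma \ref{stable-semicontinuous}) identifies $\mathrm{det}_{\mathcal{N}(G)}(\mathbf{1}\otimes\partial_k)$ as the limit of the Fuglede--Kadison determinants of the corresponding matrices over finite quotients. For a finite group $H$, the relevant determinant of a $\Integral H$--matrix is the $1/|H|$--power of a positive integer arising from the standard reduction to the image, and is therefore bounded below by $1$. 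Passing to the limit, $\mathrm{det}_{\mathcal{N}(G)}(\mathbf{1}\otimes\partial_k)>0$ for every $k$.

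Combining weak acyclicity with positivity of every boundary determinant, the alternating product $\prod_k\mathrm{det}_{\mathcal{N}(G)}(\mathbf{1}\otimes\partial_k)^{(-1)^k}$ is finite and positive, so $\tau^{(2)}(N,\gamma,\phi)(1)>0$. The monic power factor ambiguity inherent in $\tau^{(2)}$ is immaterial at $t=1$, since $1^r=1$ for all $r\in\Real$. The main technical ingredient is L\"uck's approximation for residually finite groups applied to the integral boundary matrices; beyond invoking these standard tools, no genuine obstacle arises.
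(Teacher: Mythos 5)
Your argument is essentially the paper's: at $t=1$ the torsion becomes the $L^2$--torsion of the covering corresponding to $\mathrm{Ker}(\gamma)$ with its $\mathrm{Im}(\gamma)$--action, weak acyclicity supplies the vanishing of the $L^2$--Betti numbers, and L\"uck's approximation theorem applied to the integral boundary matrices over the residually finite group gives that each Fuglede--Kadison determinant is at least $1$, so the alternating product is positive. One caution: L\"uck's theorem does not identify $\mathrm{det}_{\mathcal{N}(G)}(\mathbf{1}\otimes\partial_k)$ as the \emph{limit} of the determinants over finite quotients (convergence of determinants is not known in this generality); what it provides, and what the paper cites, is the one-sided statement that a matrix over $\Integral G$ is of determinant class with determinant at least $1$, which is exactly the bound your argument needs, so the conclusion stands.
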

	
	\begin{proof}
		As $(N,\gamma)$ is weakly acyclic, 
		it follows from the definition that
		$\tau^{(2)}(N,\gamma,\phi)(1)$ is the $L^2$--torsion of the pair $(N,\gamma)$,
		namely, the $L^2$--torsion of the covering space of $N$ which corresponds to $\mathrm{Ker}(\gamma)$
		equipped with the action of $\mathrm{Im}(\gamma)$. The latter can be computed
		through a weakly acyclic Hilbert chain complex of which the boundary operators
		are represented by matrices over $\Integral\mathrm{Im}(\gamma)$. As $G$ is residually finite,
		\cite[Theorem 3.4 (2)]{Lueck-approximating} implies that
		$\tau^{(2)}(N,\gamma)$ is a multiplicatively alternating
		product of positive constants which are no smaller than $1$, hence must be nonzero.
	\end{proof}
	
	\begin{lemma}\label{degree-bTorsion}
		Let $u_1,v_1,\cdots, u_l,v_l\in \pi_1(N)$ be a collection of elements
		and $A$ be  a square matrix over $\Integral\pi_1(N)$ 
		as asserted by Lemma \ref{torsionToMatrix}. 
		Given any admissible triple $(\pi_1(N),\gamma,\phi)$ over $\Real$, 
		the following formula holds valid and true:
		$$\mathrm{deg}^{\mathtt{b}}(\tau^{(2)}(N,\gamma,\phi))\,=\,
		\mathrm{deg}^{\mathtt{b}}\left(\mathrm{det}^{\mathtt{r}}_{\mathcal{N}(G)}(\kappa(\phi,\gamma,t)(A))\right)-\sum_{i=1}^l|\phi(u_i)-\phi(v_i)|.$$		
	\end{lemma}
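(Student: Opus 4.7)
The plan is to apply Lemma \ref{torsionToMatrix} to express $\tau^{(2)}(N,\gamma,\phi)$, up to a monic power function factor, as the product
$$\mathrm{det}^{\mathtt{r}}_{\mathcal{N}(G)}(\kappa(\gamma,\phi,t)(A))\cdot\prod_{i=1}^l\mathrm{det}^{\mathtt{r}}_{\mathcal{N}(G)}(\kappa(\gamma,\phi,t)(u_i-v_i))^{-1},$$
and then to take the growth bound degree factor by factor. The additivity identity $\mathrm{deg}^{\mathtt{b}}(fg)=\mathrm{deg}^{\mathtt{b}}(f)+\mathrm{deg}^{\mathtt{b}}(g)$ is immediate from Definition \ref{degree-b} as soon as each of $\mathrm{deg}^{\mathtt{b}}_{+\infty}$ and $\mathrm{deg}^{\mathtt{b}}_{0+}$ exists in $\Real$ for the two factors, while a monic power factor $t^r$ contributes $0$ to the difference. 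So the remaining work is to confirm validity for each factor, and then to compute the degree of each $u_i-v_i$ contribution explicitly.

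For validity, every factor is the regular Fuglede--Kadison determinant of a residually finite $L^2$--Alexander twist, so Theorem \ref{mConvex-eBounded} ensures that it is either constantly zero or multiplicatively convex with bounded exponent. The degenerate alternative is ruled out by Lemma \ref{nonzeroTorsion}, which yields $\tau^{(2)}(N,\gamma,\phi)(1)>0$: since the asserted product differs from $\tau^{(2)}(N,\gamma,\phi)$ only by a factor $t^r$, itself strictly positive on $\Real_+$, each factor must take a positive value somewhere. Lemma \ref{zeroOrNot} then upgrades each factor to a nowhere vanishing multiplicatively convex function with bounded exponent, and Lemma \ref{degree-bCharacterization} delivers the existence of $\mathrm{deg}^{\mathtt{b}}$ for each.

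It remains to establish $\mathrm{deg}^{\mathtt{b}}\bigl(\mathrm{det}^{\mathtt{r}}_{\mathcal{N}(G)}(\kappa(\gamma,\phi,t)(u_i-v_i))\bigr)=|\phi(u_i)-\phi(v_i)|$ for each $i$. With $p=1$, the element $u_i-v_i$ has exactly two nonzero group coefficients, so $R(u_i-v_i,\phi)=|\phi(u_i)-\phi(v_i)|$, and Theorem \ref{mConvex-eBounded} supplies the upper bound. For the matching lower bound, I would rewrite $\kappa(\gamma,\phi,t)(u_i-v_i)=t^{\phi(v_i)}\cdot\bigl(t^{\phi(u_i)-\phi(v_i)}\gamma(u_iv_i^{-1})-1\bigr)\cdot\gamma(v_i)$ and restrict the regular Fuglede--Kadison determinant to the cyclic subgroup of $G$ generated by $\gamma(u_iv_i^{-1})$. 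When this cyclic subgroup is infinite, a one-variable Mahler measure calculation as in Lemma \ref{mConvex-eBounded-FA} reproduces $\max\{t^{\phi(u_i)},t^{\phi(v_i)}\}$ exactly. When the cyclic subgroup is finite of some order $n$, the determinant becomes $t^{\phi(v_i)}\cdot\prod_{\zeta^n=1}|t^{\phi(u_i)-\phi(v_i)}\zeta-1|^{1/n}$; since each factor $|t^c\zeta-1|$ is asymptotic to $t^c$ as $t\to+\infty$ and to $1$ as $t\to0+$, the overall end slopes agree with those of $\max\{t^{\phi(u_i)},t^{\phi(v_i)}\}$. Handling this finite-order possibility, and verifying that the asymptotic slopes are achieved rather than merely bounded, is the only slightly delicate step and will be the main obstacle; once in hand, combining the pieces through additivity of $\mathrm{deg}^{\mathtt{b}}$ yields the formula asserted in the lemma.
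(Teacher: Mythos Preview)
Your proof is correct and follows the same skeleton as the paper's: invoke Lemma \ref{torsionToMatrix}, use Theorem \ref{mConvex-eBounded} together with Lemmas \ref{nonzeroTorsion} and \ref{zeroOrNot} to guarantee that the determinant factor is nowhere zero with bounded exponent, and then read off $\mathrm{deg}^{\mathtt{b}}$ additively.

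The one place where you work harder than necessary is the $u_i-v_i$ contribution. You invoke the \emph{first} displayed equality of Lemma \ref{torsionToMatrix} and then recompute $\mathrm{det}^{\mathtt{r}}_{\mathcal{N}(G)}(\kappa(\gamma,\phi,t)(u_i-v_i))$ via a cyclic-subgroup and Mahler-measure argument. The paper instead uses the \emph{second} equality of Lemma \ref{torsionToMatrix}, where those factors have already been evaluated to $\max\{t^{\phi(u_i)},t^{\phi(v_i)}\}$; the degree $|\phi(u_i)-\phi(v_i)|$ is then immediate. Your detour simply redoes that part of the proof of Lemma \ref{torsionToMatrix}. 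Relatedly, the finite-order branch you flag as ``the only slightly delicate step'' never occurs: under the hypotheses that make Lemma \ref{torsionToMatrix} applicable, the nontriviality of $[u_i]-[v_i]$ in $H_1(N;\Rational)$ forces $\gamma(u_iv_i^{-1})$ to have infinite order in $G$, so only the infinite-cyclic case of your computation is relevant.
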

	
	\begin{proof}
		The function $\mathrm{det}^{\mathtt{r}}_{\mathcal{N}(G)}(\kappa(\phi,\gamma,t)(A))$ of $t\in\Real_+$
		is multiplicatively convex by Theorem \ref{mConvex-eBounded}. In fact,
		it is nowhere zero and hence with bounded exponent by Lemmas \ref{nonzeroTorsion}, \ref{torsionToMatrix},
		and \ref{zeroOrNot}. Thus it is valid to speak of
		$\mathrm{deg}^{\mathtt{b}}(\tau^{(2)}(N,\gamma,\phi))$ and the formula
		follows immediately from Lemma \ref{torsionToMatrix}.
	\end{proof}
	
	\begin{proof}[{Proof of Theorem \ref{main-torsion-weak}}]
		We continue to adopt the assumptions of this subsection.
		It follows from Lemmas \ref{torsionToMatrix}, \ref{nonzeroTorsion}, and Theorem \ref{mConvex-eBounded}
		that $\tau^{(2)}(N,\gamma,\phi)$ is everywhere positive and continuous in $t\in\Real_+$.
		For any constants $a,b\in\Real$, note that the function $\max\{t^a,t^b\}^{-1}$ can
		always be turned into a multiplicatively convex function
		by multiplying a sufficiently high power of $\max\{1,t\}$, for example, by making the power
		at least $|a-b|$. It further follows that 
		$\tau^{(2)}(N,\phi)\cdot\max\{1,t\}^m$ is multiplicatively convex
		with bounded exponent any sufficiently large positive constant $m$.
		The Lipschitz continuity of $\mathrm{deg}^{\mathtt{b}}(\tau^{(2)}(N,\gamma,\phi+\gamma^*\xi))$ 
		as a function of $\xi\in H^1(G;\Real)$
		is a consequence of Theorem \ref{continuityOfDegree}.
		Therefore, it remains to show that for all admissible triple $(N,\gamma,\phi)$,
		the following comparison holds true:
			$$\mathrm{deg}^{\mathtt{b}}(\tau^{(2)}(N,\gamma,\phi))\,\leq\,x_N(\phi).$$
		
		To this end, we first prove the comparison 
		for any admissible triple $(N,\gamma,\phi_0)$ where $\phi_0$ is a primitive class
		in $H^1(N;\Integral)$. Let $u_1,v_1,\cdots, u_l,v_l\in \pi_1(N)$ be a collection of elements
		and $A$ be a square matrix over $\Integral\pi_1(N)$ 
		as guaranteed by the `moreover' part of Lemma \ref{torsionToMatrix}. 
		It is clear that
		for any arbitrary $\delta>0$,
			$$\lim_{t\to0+}\mathrm{det}^{\mathtt{r}}_{\mathcal{N}(G)}(\kappa(\gamma,\phi_0,t)(A))\cdot t^{\delta}\,=\,0,$$
		and
			$$\lim_{t\to+\infty}\mathrm{det}^{\mathtt{r}}_{\mathcal{N}(G)}(\kappa(\gamma,\phi_0,t)(A))\cdot t^{-k-\delta}\,=\,0,$$
		so
			$$\mathrm{deg}^{\mathtt{b}}\left(\mathrm{det}^{\mathtt{r}}_{\mathcal{N}(G)}(\kappa(\gamma,\phi_0,t)(A))\right)\,\leq\,k.$$
		On the other hand, the integrality of $\phi_0$ and the property that $\phi_0(u_i)\neq\phi_0(v_i)$ imply
			$$\sum_{i=1}^l|\phi_0(u_i)-\phi_0(v_i)|\,\geq\,l.$$
		Then Lemma \ref{degree-bTorsion} yields the comparison
		$$\mathrm{deg}^{\mathtt{b}}(\tau^{(2)}(N,\gamma,\phi_0))\,\leq\,k-l\,=\,x_N(\phi_0).$$
		
		For admissible triples over $\Rational$, the comparison follows immediately from the integral case by considering
		an integral multiple of $\phi$. For admissible triples over $\Real$, the comparison follows from the continuity
		of degree together with the continuity of Thurston norm.
		
		This completes the proof of  Theorem \ref{main-torsion-weak}.		
	\end{proof}

\subsection{Degree for the full twist}\label{Subsec-degreeFullTwist}
	In this subsection, we prove Theorem \ref{main-torsion}.
	Suppose that $N$ is an irreducible orientable compact $3$-manifold 
	with empty or incompressible toral boundary.
	When $N$ contains no hyperbolic piece in its geometric decomposition, $N$ is 
	a graph manifold, possibly a Seifert fibered space.
	Theorem \ref{main-torsion} in this case is an immediate consequence
	of \cite[Theorem 1.2]{DFL-torsion}, \cite{Herrmann}. 
	
	Therefore, throughout this section,
	we assume that $N$ contains at least one hyperbolic piece, or in other words,
	$N$ is either hyperbolic or so-called mixed. Note that $N$ is aspherical 
	so the $\ell^2$--Betti numbers of $N$ all vanish, by Lott--L\"uck \cite{Lott-Lueck}.
	For any class $\phi\in H^1(\pi_1(N);\Real)$, any representative
	of the associated full $L^2$--Alexander torsion
		$$\tau^{(2)}(N,\phi):\,\Real_+\to [0,+\infty)$$
	is everywhere positive and continuous, and 
	$\mathrm{deg}^{\mathtt{b}}(\tau^{(2)}(N,\phi))\in\Real$ 
	is at most $x_N(\phi)$,
	(Theorem \ref{main-torsion-weak}).
	It remains to determine the asymptotics as the parameter $t$ tends to $+\infty$ or $0+$.	
	
	Recall that a class $\phi\in H^1(N;\Real)$ is said to be \emph{quasi-fibered} if $\phi$ is the limit
	of a sequence of fibered classes in $H^1(N;\Rational)$.
	
	\begin{lemma}\label{quasifiberedClasses}
		Let $G$ be a finitely generated, residually finite group.
		For every homomorphism $\gamma:\pi_1(N)\to G$ which induces an isomorphism under $H_1(-;\Real)$, and
		for every quasi-fibered class $\phi\in H^1(N;\Real)$,
			$$\mathrm{deg}^{\mathtt{b}}\left(\tau^{(2)}(N,\gamma,\phi)\right)\,=\,x_N(\phi).$$
	\end{lemma}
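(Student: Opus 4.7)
The upper bound $\mathrm{deg}^{\mathtt{b}}(\tau^{(2)}(N,\gamma,\phi))\le x_N(\phi)$ is already supplied by Theorem~\ref{main-torsion-weak}(2); the plan is to produce the matching lower bound by reducing, via continuity and scaling, to the case of a primitive integral fibered class, and then computing the degree directly from the matrix representation provided by Lemma~\ref{torsionToMatrix}.

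For the reduction step, since $\gamma$ induces an isomorphism on $H_1(-;\Real)$, the pullback $\gamma^{*}\colon H^1(G;\Real)\to H^1(N;\Real)$ is also an isomorphism; Theorem~\ref{main-torsion-weak}(3) then yields Lipschitz continuity of $\psi\mapsto\mathrm{deg}^{\mathtt{b}}(\tau^{(2)}(N,\gamma,\psi))$ on all of $H^1(N;\Real)$, and the Thurston norm is continuous (piecewise linear on a rational polyhedron). By definition a quasi-fibered class $\phi$ is a limit of fibered classes in $H^1(N;\Rational)$, so once the asserted equality is verified along such a cofinal sequence it extends to $\phi$. Invoking also the scaling identity $\tau^{(2)}(N,\gamma,c\phi)(t)\doteq\tau^{(2)}(N,\gamma,\phi)(t^{c})$ for positive real $c$ (the natural transfer of property~(1) from the introduction to the $\gamma$-twisted setting) together with the linearity of $x_N$ along rays, the problem is reduced to the case of a primitive integral fibered class $\phi_0$.

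To handle this remaining case, I apply the moreover part of Lemma~\ref{torsionToMatrix} to obtain elements $u_1,v_1,\dots,u_l,v_l\in\pi_1(N)$ with $\phi_0(u_i)\ne\phi_0(v_i)$ integers (normalised so that $\sum_{i}|\phi_0(u_i)-\phi_0(v_i)|=l$) and a matrix $A=A_0+\mu\cdot(\mathbf{1}_{k\times k}\oplus 0)$ over $\Integral\pi_1(N)$ with $A_0$ supported in $\Integral\mathrm{Ker}(\phi_0)$, $\phi_0(\mu)=1$, and $k-l=x_N(\phi_0)$. By Lemma~\ref{degree-bTorsion} it then suffices to show $\mathrm{deg}^{\mathtt{b}}(V_G)=k$, where $V_G(t)=\mathrm{det}^{\mathtt{r}}_{\mathcal{N}(G)}(\kappa(\gamma,\phi_0,t)(A))$. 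Writing $\kappa(\gamma,\phi_0,t)(A)=\gamma(A_0)+t\gamma(\mu)\cdot(\mathbf{1}_{k}\oplus 0)$ as a $2\times 2$ block matrix, I factor out the dominant $t\gamma(\mu)\mathbf{1}_{k}$ from the upper-left block as $t\to+\infty$ and analyse the limit $t\to 0+$ directly; this shows that $V_G(t)\sim t^{k}\cdot D_{+\infty}$ at infinity and $V_G(t)\to D_{0+}$ at $0$, where $D_{\pm}$ are regular Fuglede--Kadison determinants of explicit matrices built from $\gamma(A_0)$ and a Schur complement. Combined with Lemma~\ref{degree-bTorsion}, this yields $\mathrm{deg}^{\mathtt{b}}(\tau^{(2)}(N,\gamma,\phi_0))=k-l=x_N(\phi_0)$.

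The main obstacle will be verifying that $D_{+\infty}$ and $D_{0+}$ are strictly positive rather than merely non-negative; equivalently, that the limiting Schur-complement operator is full rank and of determinant class on $\ell^{2}(G)$. Positivity of the midpoint value $V_G(1)$ is available from Lemma~\ref{nonzeroTorsion}, and the positivity of the two limit coefficients then follows either by applying L\"uck's approximation theorem (together with the non-vanishing of the pertinent $L^{2}$-torsions) to the cofinal tower of finite quotients of $\gamma(\mathrm{Ker}(\phi_0))$ afforded by the residual finiteness of $G$, or equivalently by invoking the fibered formula for $L^{2}$-Alexander torsion from \cite{DFL-torsion}; under the standing assumption that $N$ contains a hyperbolic piece, this secures both limits and completes the argument.
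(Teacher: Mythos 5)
Your reduction step coincides with the paper's: admissibility via Lemma~\ref{homologicallyIsomorphic}, Lipschitz continuity of the degree from Theorem~\ref{main-torsion-weak}(3) (using that $\gamma^{*}$ is an isomorphism), continuity of $x_N$, and scaling to pass from quasi-fibered real classes to primitive integral fibered classes. Where you diverge is the fibered case itself. The paper disposes of it by quoting \cite[Theorem 1.3]{DFL-torsion}: for a fibered class the torsion $\tau^{(2)}(N,\gamma,\phi_0)$ is eventually monomial at both ends with degree $x_N(\phi_0)$, so $\mathrm{deg}^{\mathtt{b}}=\mathrm{deg}^{\mathtt{a}}=x_N(\phi_0)$, and the limit over fibered approximants finishes the argument. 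You instead propose to compute $\mathrm{deg}^{\mathtt{b}}(V_G)$ directly from the block form $A=A_0+\mu\cdot(\mathbf{1}_{k\times k}\oplus 0)$ by a Schur-complement analysis of $\gamma(A_0)+t\gamma(\mu)(\mathbf{1}_k\oplus 0)$ at $t\to+\infty$ and $t\to 0+$.

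The gap is exactly at the point you flag as ``the main obstacle,'' and the tools you offer do not close it. You need the limiting operators --- $\gamma(A_0)$ at $0+$ and the limiting Schur complement (the lower-right block of $\gamma(A_0)$) at $+\infty$ --- to be full rank and of determinant class, so that $D_{0+},D_{+\infty}>0$; without this, norm convergence only gives upper semicontinuity of $\mathrm{det}^{\mathtt{r}}$ (Lemma~\ref{norm-semicontinuous}), hence only $\mathrm{deg}^{\mathtt{b}}(V_G)\leq k$ and thus only the upper bound you already had. L\"uck's approximation theorem gives determinant class and a lower bound $1$ for the determinant of the injective part of an integral matrix over a residually finite group, but it says nothing about injectivity, and $\mathrm{det}^{\mathtt{r}}$ vanishes precisely when full rank fails; likewise Lemma~\ref{nonzeroTorsion} plus multiplicative convexity give $V_G(t)>0$ for each fixed $t$ but not positivity of the limits (a positive multiplicatively convex function need not be asymptotically monomial --- that is the entire subtlety of Section~\ref{Sec-asymptotics}). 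Proving the required injectivity amounts to exploiting the fibration (identifying these limits with weakly acyclic operators attached to the fiber), which is precisely the content of the proof of \cite[Theorem 1.3]{DFL-torsion}; so your ``equivalent'' fallback of invoking that theorem is not an alternative justification of the same computation, it replaces the computation and is the paper's proof. Two further points: the normalization $\sum_i|\phi_0(u_i)-\phi_0(v_i)|=l$ is not guaranteed by Lemma~\ref{torsionToMatrix} (it only gives $\phi_0(u_i)\neq\phi_0(v_i)$, and in the construction these differences are the multiplicities $r_i$, which need not be $1$), so you would have to argue that a multiplicity-one dual surface can be used; and the standing hyperbolicity assumption plays no role in the fibered formula, so appealing to it to ``secure both limits'' carries no force.
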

	
	\begin{proof}
		Note that $(\pi_1(N),\gamma,\phi)$ is always
		admissible regardless of $\phi$ by Lemma \ref{homologicallyIsomorphic}. If $\phi\in H_1(N;\Rational)$ is a rational, fibered class, 
		the conclusion follows from
		\cite[Theorem 1.3]{DFL-torsion}. 
		In fact, for such $\phi$, the $L^2$--Alexander torsion
		$\tau^{(2)}(N,\gamma,\phi)$ is known to be asymptotically monomial, (indeed, eventually monomial,
		by \cite[Theorem 1.3]{DFL-torsion},) so in this case,
			$$\mathrm{deg}^{\mathrm{b}}\left(\tau^{(2)}(N,\gamma,\phi)\right)\,=\,\mathrm{deg}^{\mathrm{a}}\left(\tau^{(2)}(N,\gamma,\phi)\right)\,=\,x_N(\phi),$$
		cf. Definitions \ref{degree-a} and \ref{degree-b}.
			
		For any quasi-fibered class $\phi\in H_1(N;\Real)$, we take a sequence of 
		rational, fibered classes $\{\phi_n\}_{n\in\Natural}$ which converges to $\phi$. 
		Then by the continuity of degree (Theorem \ref{main-torsion-weak} (3)) and the formula of Lemma \ref{torsionToMatrix}, 
		we see that
		\begin{eqnarray*}
		\mathrm{deg}^{\mathtt{b}}\left(\tau^{(2)}(N,\gamma,\phi_n)\right)
		&=&\lim_{n\to\infty}\mathrm{deg}^{\mathtt{b}}\left(\tau^{(2)}(N,\gamma,\phi_n)\right)\\
		&=&\lim_{n\to\infty} x_N(\phi_n)\\
		&=&x_N(\phi).
		\end{eqnarray*}
		This completes the proof.		
	\end{proof}
	
	Let $u_1,v_1,\cdots, u_l,v_l\in \pi_1(N)$ be a collection of elements
		and $A$ be  a square matrix over $\Integral\pi_1(N)$ 
		as asserted by Lemma \ref{torsionToMatrix}.

	\begin{lemma}\label{quasifiberedTower}
		Given any class $\phi\in H^1(N;\Real)$, there exists a tower of quotients of $\pi_1(N)$
			$$\pi_1(N)\to\cdots\to \Gamma_n\to \cdots \to\Gamma_2\to\Gamma_1$$
		with all the following properties:
		\begin{itemize}
		\item The quotients $\Gamma_n$ are finitely generated and virtually abelian.
		\item The homomorphisms $\gamma_n:\pi_1(N)\to \Gamma_n$ induce
		isomorphisms under $H_1(-;\Real)$.
		\item The sequence of admissible triples $\{(\pi_1(N),\gamma_n,\phi)\}_{n\in\Natural}$ 
		forms a cofinal tower of 
		quotients of $(\pi_1(N),\gamma_\infty,\phi)$, where $\gamma_\infty$ denotes 
		$\mathrm{id}_{\pi_1(N)}:\pi_1(N)\to\pi_1(N)$.
		\end{itemize}
		Furthermore, 
		the tower can be required to satisfy:
		$$\mathrm{deg}^{\mathtt{b}}(V_n)=\mathrm{deg}^{\mathtt{b}}(V_\infty)$$
		for all $n\in\Natural$, where
		$$V_n(t)=\mathrm{det}^{\mathtt{r}}_{\mathcal{N}(\Gamma_n)}(\kappa(\phi,\gamma_n,t)(A)),$$
		and the notation $V_\infty(t)$ is understood similarly.		
	\end{lemma}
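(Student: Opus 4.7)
The plan is to construct the required tower from the virtual RFRS property of $\pi_1(N)$. Since $N$ contains a hyperbolic piece, $\pi_1(N)$ is virtually RFRS. Fix a finite-index normal RFRS subgroup $\pi^{(0)}\lhd\pi_1(N)$ and take a cofinal descending chain $\pi^{(0)}\rhd\pi^{(1)}\rhd\pi^{(2)}\rhd\cdots$ of finite-index subgroups of $\pi^{(0)}$ witnessing its RFRS property, with $\bigcap_n\pi^{(n)}=\{1\}$. By intersecting with $\pi_1(N)$-conjugates we may assume each $\pi^{(n)}$ is normal in $\pi_1(N)$. Define
$$K_n\,=\,\ker(\pi^{(n)}\to H_1(\pi^{(n)};\Integral)/\mathrm{tors}),$$
which is characteristic in $\pi^{(n)}$ and hence normal in $\pi_1(N)$. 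Set $\Gamma_n=\pi_1(N)/K_n$ and let $\gamma_n:\pi_1(N)\to\Gamma_n$ be the quotient map.

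The first three properties follow directly from the construction. Each $\Gamma_n$ fits in a short exact sequence
$$1\,\to\,\pi^{(n)}/K_n\,\to\,\Gamma_n\,\to\,\pi_1(N)/\pi^{(n)}\,\to\,1$$
with finitely generated free abelian kernel and finite quotient, so $\Gamma_n$ is finitely generated and virtually abelian. For any $\xi\in H^1(\pi_1(N);\Real)$, the restriction $\xi|_{\pi^{(n)}}$ factors through $\pi^{(n)}/K_n$, so $\ker\xi$ contains $K_n$ and hence $\xi$ factors through $\gamma_n$; this shows $\gamma_n^*$ is surjective on $H^1(-;\Real)$, and dually $H_1(\gamma_n)$ is an isomorphism. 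Cofinality follows from $\bigcap_nK_n\subset\bigcap_n\pi^{(n)}=\{1\}$.

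The degree equality is the main content. By Agol's virtual quasi-fibering theorem applied to the RFRS group $\pi^{(0)}$, there exists $n_0\geq 1$ such that $\phi|_{\pi^{(n_0)}}$ is a quasi-fibered class on the finite cover $N^{(n_0)}\to N$ corresponding to $\pi^{(n_0)}$. Since quasi-fiberedness is preserved by further finite covers, after discarding the first $n_0-1$ terms we may assume $\phi_n:=\phi|_{\pi^{(n)}}$ is quasi-fibered on $N^{(n)}$ for all $n$. The restriction $\gamma_n|_{\pi^{(n)}}$, viewed with codomain its image $\pi^{(n)}/K_n\cong H_1(\pi^{(n)};\Integral)/\mathrm{tors}$, induces an isomorphism on $H_1(-;\Real)$. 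Applying Lemma \ref{quasifiberedClasses} to both $(N^{(n)},\gamma_n|_{\pi^{(n)}},\phi_n)$ and $(N^{(n)},\mathrm{id}_{\pi^{(n)}},\phi_n)$ yields
$$\mathrm{deg}^{\mathtt{b}}(\tau^{(2)}(N^{(n)},\gamma_n|_{\pi^{(n)}},\phi_n))\,=\,x_{N^{(n)}}(\phi_n)\,=\,\mathrm{deg}^{\mathtt{b}}(\tau^{(2)}(N^{(n)},\phi_n)).$$
By Gabai's multiplicativity of the Thurston norm under finite covers, $x_{N^{(n)}}(\phi_n)=[\pi_1(N):\pi^{(n)}]\cdot x_N(\phi)$. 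The restriction formula for regular Fuglede--Kadison determinants employed in the proof of Proposition \ref{mConvex-eBounded-VA} yields the matching multiplicativity
$$\tau^{(2)}(N^{(n)},\gamma|_{\pi^{(n)}},\phi|_{\pi^{(n)}})\,\doteq\,\tau^{(2)}(N,\gamma,\phi)^{[\pi_1(N):\pi^{(n)}]}$$
for any admissible triple $(\pi_1(N),\gamma,\phi)$. Combining these gives $\mathrm{deg}^{\mathtt{b}}(\tau^{(2)}(N,\gamma_n,\phi))=x_N(\phi)=\mathrm{deg}^{\mathtt{b}}(\tau^{(2)}(N,\phi))$, and translating back via Lemma \ref{degree-bTorsion} yields $\mathrm{deg}^{\mathtt{b}}(V_n)=\mathrm{deg}^{\mathtt{b}}(V_\infty)=x_N(\phi)+\sum_i|\phi(u_i)-\phi(v_i)|$, completing the proof.

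The principal obstacle is the proper formulation of the covering multiplicativity of the $L^2$-Alexander torsion and its compatibility with Gabai's multiplicativity of the Thurston norm, ensuring the scaling factor $[\pi_1(N):\pi^{(n)}]$ matches on both sides of the comparison; once this bookkeeping is handled the result reduces to direct applications of Lemma \ref{quasifiberedClasses}.
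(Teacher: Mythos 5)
Your proposal is correct and follows essentially the same route as the paper: pass to a cofinal tower of finite-index normal subgroups lying inside an Agol quasi-fibering cover, quotient $\pi_1(N)$ by the kernels of their free abelianizations to get the virtually abelian $\Gamma_n$, and obtain the degree equality by combining Lemma \ref{quasifiberedClasses} with the covering multiplicativity of the torsion degree and of the Thurston norm, translating back through Lemma \ref{degree-bTorsion}. The only cosmetic difference is that the paper takes an arbitrary cofinal tower of normal finite-index subgroups and intersects it with the subgroup $\tilde{\pi}$ of the quasi-fibering cover, whereas you rely on Agol's theorem delivering quasi-fiberedness at a term of your (normalized) RFRS chain; the paper's intersection trick sidesteps any question about which finite-index subgroup Agol's theorem actually produces.
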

	
	\begin{proof}
		As we have assumed for this section that $N$ is either hyperbolic or mixed, there exists
		a regular finite cover $p:\tilde{N}\to N$ which corresponds to a finite index subgroup $\tilde{\pi}$ of $\pi_1(N)$,
		such that $p^*\phi\in H^1(\tilde{N};\Real)$ is quasi-fibered. This follows from a combination 
		of Agol's RFRS criterion for virtual fibering \cite{Agol-RFRS} and the virtual specialness of 
		hyperbolic and mixed $3$-manifolds \cite{Agol-VHC,Wise-book,PW-mixed},
		cf.~\cite[Subsection 10.1]{DFL-torsion}.
		Observe that for any further subgroup of finite index in $\tilde{\pi}$ which is normal in $\pi_1(N)$,
		the corresponding finite cover again carries the pull-back of $\phi$ as a quasi-fibered class.
		
		Take a cofinal tower of normal finite-index subgroups of $\pi_1(N)$,
			$$\pi_1(N)\geq \Pi_1\geq \Pi_2\geq\cdots\geq\Pi_n\geq\cdots.$$
		Possibly after intersecting the terms with $\tilde\pi$,
		we may require that $\Pi_n$ are all contained in $\tilde{\pi}$.
		For all $n\in\Natural$, define
			$$\Gamma_n\,=\,\pi_1(N)\,/\,(\mathrm{Ker}(\Pi_n\to H_1(\Pi_n;\Rational)).$$
		All the asserted properties of Lemma \ref{quasifiberedTower} hold obviously true 
		for the tower of quotients $\{\Gamma_n\}$,
		except maybe the `furthermore' part.
		
		To check the equality of degree, denote by
			$$p_n:\,\tilde{N}_n\to N$$
		the finite cover corresponding to the image of $\Pi_n$ in $\Gamma_n$.
		Taking restriction to $\pi_1(\tilde{N}_n)$ gives rise to new admissible triples $(\pi_1(\tilde{N}),\tilde\gamma_n,p^*_n\phi)$.
		By the dotted equality of Lemma \ref{torsionToMatrix}, and basic properties of regular Fuglede--Kadison
		determinants,	and Lemma \ref{quasifiberedClasses}, for all $n\in\Natural$,
		\begin{eqnarray*}
			\mathrm{deg}^{\mathtt{b}}\left(\tau^{(2)}(N,\gamma_n,\phi)\right)
			&=&\frac{1}{[\tilde{N}:N]}\cdot\mathrm{deg}^{\mathtt{b}}\left(\tau^{(2)}(\tilde{N}_n,\tilde{\gamma}_n,p^*\phi)\right)\\
			&=&\frac{1}{[\tilde{N}:N]}\cdot x_{\tilde{N}_n}(p_n^*\phi)\\
			&=&x_{N}(\phi).
		\end{eqnarray*}
		Note that the calculation above does not require the target group to be
		virtually abelian. Therefore, the same calculation for $\tau^{(2)}(N,\gamma_\infty,\phi)$
		yields the equality
		$$\mathrm{deg}^{\mathtt{b}}\left(\tau^{(2)}(N,\gamma_\infty,\phi)\right)\,=\,x_{N}(\phi).$$
		It follows from Lemma \ref{degree-bTorsion} that
		$$\mathrm{deg}^{\mathtt{b}}(V_n)=\mathrm{deg}^{\mathtt{b}}(V_\infty)$$
		for all $n\in\Natural$.
	\end{proof}
	
	\begin{proof}[{Proof of Theorem \ref{main-torsion}}]
	We continue to adopt the assumptions of this subsection.
	It suffices to prove the statements (2), (3), and (4).
	
	Given $N$ hyperbolic or mixed and any $\phi\in H^1(N;\Real)$,
	we take a tower of quotients as guaranteed by Lemma \ref{quasifiberedTower}.
	By Theorem \ref{rationalAsymptotic}, we see that the function (now dropping the subscript $\infty$)
		$$V(t)=\mathrm{det}^{\mathtt{r}}_{\mathcal{N}(\Gamma_n)}(\kappa(\phi,\mathrm{id}_{\pi_1(N)},t)(A))$$
	is asymptotically monomial in both ends.
	In fact, as $t\to+\infty$,
		$$V(t)\sim C_{+\infty}\cdot t^{\mathrm{deg}^{\mathtt{b}}_{+\infty}(V)}$$
	for some constant
		$$C_{+\infty}\in\left[1,e^{\mathrm{Vol}(N)/6\pi}\right],$$
	and the same statement holds true with $+\infty$ replaced by $0+$.
	Here the upper bound comes from
		$$V(1)\,=\,\tau^{(2)}(N,\phi)(1)\,=\,\tau^{(2)}(N)\,=\,e^{\mathrm{Vol}(N)/6\pi}.$$
	Therefore, $\tau^{(2)}(N,\phi)$ is also asymptotically monomial in both ends with the same
	estimation of coefficients.
	In particular, the asymptote degree of $\tau^{(2)}(N,\phi)$ is valid,
	and 
		$$\mathrm{deg}^{\mathtt{a}}\left(\tau^{(2)}(N,\phi)\right)\,=\,\mathrm{deg}^{\mathtt{b}}\left(\tau^{(2)}(N,\phi)\right)\,=\,x_N(\phi).$$
	By the symmetry of $L^2$--Alexander torsion for $3$-manifolds \cite{DFL-symmetric},
	we further imply
		$$C_{+\infty}\,=\,C_{0+}.$$
	This allows us to refer to both of them by one notation:
		$$C(N,\phi)\in\left[1,e^{\mathrm{Vol}(N)/6\pi}\right].$$

	It remains to argue that $C(N,\phi)$ depends upper semi-continuously on $\phi\in H^1(N;\Real)$. In fact,
	suppose that $\{\phi_n\in H^1(N;\Real)\}_{n\in\Natural}$ is a sequence of cohomology classes which converges to $\phi$.
	We write
		$$V(\phi_n,t)\,=\,\mathrm{det}^{\mathtt{r}}_{\mathcal{N}(\Gamma_n)}(\kappa(\phi_n,\mathrm{id}_{\pi_1(N)},t)(A)).$$
	By Lemma \ref{norm-semicontinuous}, for all $t\in\Real_+$,
		$$\limsup_{n\to\infty} V(\phi_n,t)\leq V(t).$$
	By the continuity of degree (Theorem \ref{continuityOfDegree}),
		$$\lim_{n\to\infty} \mathrm{deg}^{\mathtt{b}}\left(V(\phi_n,t)\right)=\mathrm{deg}^{\mathtt{b}}\left(V(t)\right)$$
	Then it follows from Lemma \ref{mConvexVersion} that
		$$C(N,\phi)\,\geq\limsup_{n\to\infty} C(N,\phi_n).$$
	In other words, the leading coefficient $C(N,\phi)$ is upper semicontinuous as a function of $\phi\in H^1(N;\Real)$.

	This completes the proof of Theorem \ref{main-torsion}.
	\end{proof}

\section{Example}\label{Sec-example}
	We conclude our discussion with an example regarding nontrivial leading coefficients.
	Specifically, we construct an oriented closed $3$-manifold $N$ such that the leading coefficient
	$C(N,\phi)$ of the full $L^2$--Alexander torsion $\tau^{(2)}(N,\phi)$ gives rise to values other than the asserted
	bounds, as $\phi$ varies over $H^1(N;\Real)$.
	
	The oriented closed $3$-manifold
		$$N\,=\,K\cup\bigcup_{i\in\Integral/3\Integral} J_i$$
	is constructed by gluing a product piece $K$ and three figure-eight knot complements $J_i$ as follows. Let 
		$$K\cong \Sigma_{0,3}\times S^1$$
	be the product of the thrice holed sphere and the circle.
	We mark the boundary components of $\Sigma_{0,3}$ in cyclic order.
	For each $i\in\Integral/3\Integral$, denote by $\partial_i K\cong \partial_i \Sigma_{0,3}\times S^1$ 
	the $i$-th boundary component of $K$ accordingly. For each $i\in\Integral/3\Integral$,
	take a copy of a figure-eight knot complement 
		$$J_i\cong S^3\setminus\mathrm{Nhd}^\circ(\mathbf{4}_1).$$
	We remind the reader that
	the interior of the figure-eight complement $J_i$ is a punctured torus bundle over the circle with a pseudo-Anosov monodromy,
	and it has a unique complete hyperbolic structure of volume $\mathrm{Vol}(J_i)\,=\,2v_3$, where $v_3\approx1.01494$ is the volume of
	the regular ideal hyperbolic tetrahedron.
	Denote by $\mu_i$ and $\lambda_i$ the longitude and the meridian of $J_i$ accordingly,
	so that the boundary of $J_i$ has a canonical product structure $\partial J_i\cong \lambda_i\times \mu_i$.
	Endow $K$ and $J_i$ with canonical orientations so that the boundary is oriented accordingly.
	The oriented closed $3$-manifold $N$ is obtained 
	by gluing $K$ and $J_i$ along the boundary in such a way 
	that $\partial_i K$ is identified with $-\partial J_i$ via an isomorphism that
	takes the factor $\partial_i \Sigma_{0,3}$ to $\lambda_i$ and the factor $S^1$ to $-\mu_i$.
	
	Note that the inclusion maps induce an embedding
	\begin{eqnarray*}
		H^1(N;\Real)&\to& H^1(J_0;\Real)\oplus H^1(J_1;\Real)\oplus H^1(J_2;\Real)\\
		\phi&\mapsto&(\phi_0,\phi_1,\phi_2)
	\end{eqnarray*}
	By identifying $H^1(J_i;\Real)$ with $\Real$,
	we can identify $H^1(N;\Real)$ with the $2$-subspace of the $3$-space given by the linear equation:
		$$\phi_0+\phi_1+\phi_2\,=\,0.$$
	By the fibration structure of the figure-eight complement,
	it is easy to argue topologically that the Thurston norm of any cohomology class $\phi$ in $H^1(N;\Real)$
	is given by the formula:
		$$x_N(\phi)=|\phi_0|+|\phi_1|+|\phi_2|.$$
	The unit ball $B_x(N)$ of $x_N$ is hence the region bounded by the regular hexagon whose vertices are
	$(\pm\frac12,\mp\frac12,0)$, $(0,\pm\frac12,\mp\frac12)$,	and $(\mp\frac12,0,\pm\frac12)$.
	There are no fibered cones because the restriction of every primitive class $\phi\in H^1(N;\Integral)$ to $K$ vanishes
	on the Seifert fiber $[S^1]\in H_1(K;\Integral)$, which means 
	no subsurface that is dual to $\phi$ 
	could be transverse to the Seifert fibration everywhere (or so-called horizontal) restricted to $K$.
	
	The full $L^2$--Alexander torsion of $N$ associated with any cohomology class $\phi\in H^1(N;\Real)$
	can calculated by the formula:
		$$\tau^{(2)}(N,\phi)\doteq\tau^{(2)}(J_0,\phi_0)\cdot\tau^{(2)}(J_1,\phi_1)\cdot\tau^{(2)}(J_2,\phi_2).$$
	This follows from \cite[Theorem 3.35 (1)]{Lueck-book}, (see \cite[Theorem 3.93 (2)]{Lueck-book} for a similar calculation).
	Note that in our case, the pieces $K$ and $J_i$ are weakly acyclic
	glued along tori which contribute nothing to the $L^2$--torsion of the twisted chain complex.
	There ought to be a factor $\tau^{(2)}(K,\phi_K)$ 
	corresponding to the restriction of $\phi$ to $K$
	on the right-hand side, but that factor is represented by
	$1$ according to \cite{Herrmann}, cf.~\cite[Theorem 1.2]{DFL-torsion}.
	For each $i\in\Integral/3\Integral$, it follows from the fiberedness of the figure-eight knot complement
	that the leading coefficient
		$$C(J_i,\phi_i)\,=\,\begin{cases}e^{v_3/3\pi}&\phi_i=0\\1&\phi_i\neq0\end{cases}$$
	Therefore, for any cohomology class $\phi=(\phi_1,\phi_2,\phi_3)\in H^1(N;\Real)$, 
	the leading coefficient of $\tau^{(2)}(N,\phi)$ is given by the formula:
		$$C(N,\phi)\,=\,e^{\frac{\delta(\phi)\cdot v_3}{3\pi}}$$
	where $\delta(\phi)$ denotes the number of zero coordinates in $(\phi_0,\phi_1,\phi_2)$ 
	subject to the constraint	$\phi_0+\phi_1+\phi_2=0$.
	To summarize, the leading coefficient $C(N,\phi)$ equals $e^{\mathrm{Vol}(N)/6\pi}$ at the origin,
	and $e^{\mathrm{Vol}(N)/18\pi}$ along the six radial rays through the vertices of $B_x(N)$
	(except at the origin),
	and $1$ in the rest part of $H^1(N;\Real)$.

\bibliographystyle{amsalpha}


\begin{thebibliography}{}

\bibitem[Ag1]{Agol-RFRS} I.~Agol, \textit{Criteria for virtual fibering}, J.~Topol.~\textbf{1} (2008), 269--284.

\bibitem[Ag2]{Agol-VHC} I.~Agol, 
\textit{The virtual Haken conjecture}, with an appendix by I.~Agol, D.~Groves, and J.~Manning,
Documenta Math.~\textbf{18} (2013), 1045--1087.

\bibitem[AFW1]{AFW-group}  M.~Aschenbrenner, S.~Friedl, and H.~Wilton. \textit{3-Manifold
Groups}, EMS Series of Lectures in Mathematics, 2015.

\bibitem[Bo]{Boyd} D.~Boyd, \textit{Uniform approximation to Mahler's measure in several variables}, Canad.~Math.~Bull.~\textbf{41} (1998), 125--128. 


\bibitem[CFM]{CFM} A.~Carey, M.~Farber, and V.~Mathai, \textit{Determinant lines, von Neumann algebras and $L^2$
torsion}, J.~Reine Angew.~Math. \textbf{484} (1997), 153--181.

\bibitem[Co]{Cochran} T.~Cochran, \textit{Noncommutative knot theory}, Algebr.~Geom.~Topol.~\textbf{4} (2004), 347--398.


\bibitem[DFL1]{DFL-torsion} J. Dubois, S. Friedl, and W. L\"uck, \textit{The $L^2$--Alexander torsion of 3-manifolds}, J.~Topol.,
to appear. Preprint available at \texttt{arXiv:1410.6918v3}.

\bibitem[DFL2]{DFL-flavors} \bysame, \textit{Three flavors of twisted invariants of knots}, Introduction to
Modern Mathematics, Advanced Lectures in Mathematics 33 (2015), pp.~143--170.

\bibitem[DFL3]{DFL-symmetric} \bysame, \textit{The $L^2$--Alexander torsion is symmetric}, Algebr. Geom. Topol.,
to appear. Preprint available at \texttt{arXiv:1411.2292v1}.


\bibitem[EW]{Everest--Ward} G.~Everest and T.~Ward, \textit{Heights of Polynomials and Entropy in Algebraic
Dynamics}. Springer, London, 1999.

\bibitem[Fr]{Friedl} S.~Friedl, \textit{Twisted Reidemeister torsion, the Thurston norm and fibered manifolds},
 Geom.~Dedicata \textbf{172}, (2014), 135--145.

\bibitem[FK]{FK-norm} S.~Friedl and T.~Kim, \textit{Twisted Alexander norms give lower bounds on the Thurston norm},
Trans.~Amer.~Math.~Soc.~\textbf{360} (2008), 4597--4618.

\bibitem[FL]{FriedlLueck-new}, S.~Friedl and W.~L\"uck, \textit{The $L^2$--torsion function and the Thurston norm of 3-manifolds},
preprint (2015), 22 pages, \texttt{arXiv:1510.00264v1}.

\bibitem[FV]{FV-detectThurstonNorm} S. Friedl and S. Vidussi, \textit{The Thurston norm and twisted Alexander polynomials}, preprint,
(2012), 17 pages,  \texttt{arXiv:1204.6456v2}.

\bibitem[Ha1]{Harvey-degree} S.~Harvey, \textit{Higher-order polynomial invariants of 3-manifolds giving lower bounds
for the Thurston norm}. Topol.~\textbf{44} (2005), 895--945.

\bibitem[Ha2]{Harvey-monotonicity} \bysame, \textit{Monotonicity of degrees of generalized Alexander polynomials of
groups and 3-manifolds}. Math.~Proc.~Camb.~Philos.~Soc.~\textbf{140} (2006), 431--450.

\bibitem[He]{Herrmann} G. Herrmann, The $L^2$--Alexander torsion of Seifert fibered spaces, Masters thesis (2015), University of Regensburg.

\bibitem[LZ1]{LZ-Alexander} W.~Li and W.~Zhang, \textit{An $L^2$--Alexander invariant for knots}, Commun.~Contemp.~Math.~\textbf{8}
(2006), 167--187.

\bibitem[LZ2]{LZ-AlexanderConway} \bysame, \textit{An $L^2$--Alexander--Conway invariant for knots and the volume conjecture}
Differential Geometry and Physics, pp.~303--312, Nankai Tracts Math., 10, World Sci.~Publ., Hackensack, NJ, 2006.

\bibitem[L]{Liu} Y.~Liu, \textit{Virtual cubulation of nonpositively curved graph manifolds}, J.~Topol.~\textbf{6} (2013), 793--822.

\bibitem[LL]{Lott-Lueck} J.~Lott and W.~L\"uck, \textit{$L^2$-topological invariants of 3-manifolds},
Invent.~Math.~\textbf{120} (1995), 15--60.

\bibitem[L\"{u}1]{Lueck-approximating} W. L\"{u}ck, \textit{Approximating $L^2$-invariants by their finite-dimensional analogues}, 
Geom.~Funct.~Analysis \textbf{4} (1994), 455--481.

\bibitem[L\"{u}2]{Lueck-book} W. L\"{u}ck, \textit{$L^2$-Invariants: Theory and Applications to Geometry and K-Theory},
 Ergebnisse der Mathematik und ihrer Grenzgebiete. 3. Folge. Springer-Verlag, Berlin, 2002.

\bibitem[L\"{u}3]{Lueck-new} W.~L\"uck, \textit{Twisting $L^2$--invariants with finite-dimensional representations},
preprint (2015), 66 pages, \texttt{arXiv:1510.00057v1}. 

\bibitem[McM]{McMullen} C.~T.~McMullen, \textit{The Alexander polynomial of a 3-manifold and the Thurston norm
on cohomology}, Ann.~Sci.~Ec.~Norm.~Super.~(4) \textbf{35} (2002), 153--171.

\bibitem[PW1]{PW-graph} P.~Przytycki and D.~T.~Wise, \textit{Graph manifolds with boundary are virtually special}, J.~Topol.~\textbf{7}~2014, 419--435.

\bibitem[PW2]{PW-mixed} \bysame, \textit{Mixed 3-manifolds are virtually special}, preprint, 2012, 29 pages, \texttt{arXiv:1205.6742}.

\bibitem[Ra]{Raimbault} J.~Raimbault, \textit{Exponential growth of torsion in abelian coverings}, Algebr.~Geom.~Topol.~\textbf{12}
(2012), 1331--1372.

\bibitem[Th]{Thurston-norm} W.~P.~Thurston, \textit{A norm for the homology of 3-manifolds.} Mem.~Amer.~Math.~Soc.~\textbf{59} (1986),
no.~339, pp.~99--130.

\bibitem[Tu]{Turaev} V.~Turaev, \textit{A homological estimate for the Thurston norm}, preprint, 2002, 32 pages, 
\texttt{arXiv:math.GT/0207267v1}.

\bibitem[Vi]{Vidussi-norm} S.~Vidussi, \textit{Norms on the cohomology of a 3-manifold and SW theory},
Pac.~J.~Math.~\textbf{208} (2003), 169--186.

\bibitem[Wi]{Wise-book} D.~T.~Wise, \textit{From Riches to RAAGs: 3-Manifolds, Right--Angled Artin Groups, and Cubical Geometry},
 CBMS Regional Conference Series in Mathematics, 2012.


\end{thebibliography}

\end{document}